\newcommand{\ex}{\text{ex}}
\newcommand{\pl}{\mathcal{P}}
\title{Extremal planar graphs with no cycles of particular lengths}
\author{ERVIN GY\H{O}RI}
\address{Alfr\'ed R\'enyi Institute of Mathematics, Budapest, Hungary}
\email{gyori@renyi.hu}
\author{Xianzhi Wang}
\address{Middlebury College, Middlebury, VT 05753, United States}
\email{xianzhiw@middlebury.edu}
\author{Zeyu Zheng}
\address{School of Mathematical Sciences, Fudan University, Shanghai, China 200433}
\email{zeyuzheng19@fudan.edu.cn}
\begin{document}
\begin{abstract}
   In this paper we estimate the planar Tur\'an number $\mathrm{ex}_\mathcal{P}(n,H)$ of some graphs $H$,  i.e., the maximum number of edges in a planar graph $G$ of $n$ vertices not containing $H$ as a subgraph. We give a new, short proof when $H=C_5$, and study the cases when $G$ is bipartite or triangle-free and $H$ is a short even cycle. The proofs are mostly new applications or variants of the ``contribution method" introduced by Ghosh, Gy\H{o}ri, Martin, Paulos and Xiao  in \cite{ghosh2020planar}.
\end{abstract}
\maketitle

\section{Introduction}

All graphs considered in this paper are finite, undirected and simple.  We denote the vertex and the edge sets of a graph $G$ by $V(G)$ and $E(G)$, respectively, and the number of vertices and edges by $v_G$ and $e_G$, respectively,. The degree of a vertex $v$ is denoted by $d(v)$, and the minimum degree of a vertex in $G$ is denoted by $\delta(G)$. If $G$ is planar, we denote the set of its faces by $F(G)$, and we denote the number of its faces by $f_G$. Usually, we deal with $2$-connected graphs, and the length of $f$ is denoted by $l(f)$ as in \cite{west2020combinatorial}.

The classical problem in extremal graph theory is to determine the value of Tur\'an number $\ex(n,H)$, which is the maximum number of edges in a graph of $n$ vertices not containing $H$ as a subgraph. The study of planar Tur\'an number $\ex_\pl (n,H)$, initiated by Dowden in 2016 \cite{dowden2016extremal}, is a variant of Tur\'an-type problem. The planar Tur\'an number $\ex_\pl(n,H)$ is the maximum number of edges in a planar graph of $n$ vertices without $H$ as a subgraph. He determined the planar Tur\'an number of $C_4$ and $C_5$ in that paper. Later in 2019, Lan and Shi determined the planar Tur\'an number of $\Theta_4$ and $\Theta_5$ in \cite{lan2019extremal}.

In 2020, Ghosh, Gy\H{o}ri, Martin, Paulos and Xiao introduced the triangular block contribution method in \cite{ghosh2020planar}, a ``slight extension" of the discharging method, which is a useful tool to determine the value of $\ex_\pl (n, C_6)$. Using this technique, we first provide a short proof of \begin{restatable}[Dowden\cite{dowden2016extremal}]{theorem}{thmCfive}
\label{thm:C5}
$$\mathrm{ex}_\mathcal{P}(n,C_5)\le\frac{12n-33}{5}$$
for all $n\ge11$.
\end{restatable}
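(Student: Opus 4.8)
The plan is to combine Euler's formula with a discharging argument on the faces of a plane embedding, organized by ``triangular blocks'' in the spirit of \cite{ghosh2020planar}.

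First I would reduce to the case that $G$ is $2$-connected with $\delta(G)\ge 3$. A vertex of degree at most $2$ may be deleted and the bound recovered by induction on $n$ (deleting it drops $e_G$ by at most $2$, while the right-hand side decreases by $\tfrac{12}{5}>2$, so only a few small values of $n$ need a direct check), and at a cut vertex the graph splits into two pieces for which the bound is super-additive once the trivial small pieces are handled. In a $2$-connected plane graph every face is bounded by a cycle, so $C_5$-freeness means there are no faces of length $5$; hence every face has length $3$, $4$, or at least $6$. Assigning to each face $f$ the charge $\mu(f)=7l(f)-24$ and using $\sum_{f\in F(G)}l(f)=2e_G$ together with $f_G=e_G-n+2$, one gets $\sum_{f}\mu(f)=14e_G-24f_G=-10e_G+24n-48$, so $e_G\le\frac{12n-33}{5}$ is \emph{equivalent} to $\sum_{f}\mu(f)\ge 18$. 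A triangular face has $\mu=-3$, a quadrilateral has $\mu=4$, and a face of length $\ell\ge 6$ has $\mu=7\ell-24\ge 18$, so the whole task is to ``pay for'' the triangular faces.

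Next I would study the subgraph $T\subseteq G$ spanned by all edges lying on some triangular face. Every such edge lies on a triangle contained in $T$, so $T$ is bridgeless; using planarity and $C_5$-freeness one checks that every block of $T$ has at most four vertices, hence is $K_3$, $K_4-e$, or $K_4$, the last forcing $G=K_4$, which is excluded by $n\ge 11$. The triangular faces are distributed among these blocks, a $K_3$ block carrying exactly one and a $K_4-e$ block exactly two. The key local facts, all consequences of $C_5$-freeness, are: (i) an edge on a triangular face either separates two triangular faces of one common $K_4-e$ block, or else separates a triangular face from a non-triangular face; and (ii) a quadrilateral face is adjacent to triangular faces across at most two of its edges (and, when across two, those two edges bound a single common triangle). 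With these in place the discharging rule is simply: every non-triangular face sends $\tfrac32$ across each of its edges that separates it from a triangular face. Then a triangular face in a $K_3$ block receives $3\cdot\tfrac32$ and one in a $K_4-e$ block receives $2\cdot\tfrac32$, so every triangular face ends at charge $\ge 0$; a quadrilateral sends at most $2\cdot\tfrac32$ and ends at $\ge 1$; and a face of length $\ell\ge 6$ sends at most $\tfrac32\ell$ and ends at $\ge\tfrac{11}{2}\ell-24\ge 9$. Every face thus ends non-negative, which already yields $e_G\le\frac{12n-24}{5}$.

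Improving the additive constant from $-24$ to $-33$ is the crux, and is where I expect the real difficulty to lie: one must show $\sum_f\mu(f)\ge 18$, not merely $\ge 0$. If that fails then there is at most one face of length $\ge 6$ (and it has length $6$ or $7$), boundedly many ``unused'' quadrilaterals, boundedly many triangular blocks, and therefore boundedly many faces altogether; combined with $\delta(G)\ge 3$ this caps $n$, so the claim reduces to a finite check — which is precisely why the hypothesis reads $n\ge 11$. Making this counting quantitative enough to reach exactly $n\ge 11$ (rather than some larger explicit threshold) is the most delicate part, together with the two structural lemmas — the block classification and fact (ii) — that make the discharging rule balance in the first place.
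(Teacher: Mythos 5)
The decisive gap is your handling of $K_4$. The claim that a block of $T$ isomorphic to $K_4$ forces $G=K_4$ is false: a $K_4$ whose three inner triangles are faces can sit inside an arbitrarily large $2$-connected $C_5$-free planar graph with $\delta\ge 3$; the only restriction $C_5$-freeness imposes is that any outside path between two of its outer vertices has length at least $5$ (this is exactly the content of Proposition \ref{pro:C5}, that exterior pseudofaces of a $K_4$ block have length at least $6$). Far from being a degenerate case, these $K_4$ blocks are the dominant structure in near-extremal graphs: equality in Lemma \ref{lem:C5} is attained only by $\Theta_4$ blocks and by $K_4$ blocks with three junction vertices, and plugging $n=15t^2-6$, $e=36t^2-21$ into the block counts shows the extremal construction consists of $6(t^2-1)$ $K_4$ blocks and only three $\Theta_4$ blocks. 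Once a $K_4$ block is present, your fact (i) fails (each edge at its central vertex separates two triangular faces that lie in no common $K_4-e$), and under your rule each of its three triangular faces receives only $\tfrac32$, across its single exterior edge, finishing at $-3+\tfrac32=-\tfrac32<0$. So ``every triangular face ends non-negative'' is wrong, and even the weaker bound $e_G\le\frac{12n-24}{5}$ is not established by the argument as written. This is precisely what the paper's machinery is built to handle: the deficit of a $K_4$ is paid by the surrounding faces of (pseudo)length at least $6$ and by the vertex contributions at junction vertices in the inequality $9v(B)-23e(B)+33f(B)\le 0$, not by a uniform transfer of $\tfrac32$ per edge.

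The plan for upgrading the constant from $-24$ to $-33$ is also not viable as sketched. Extremal graphs satisfy $e_G=\frac{12n-33}{5}$, hence $\sum_f\bigl(7l(f)-24\bigr)=18$ exactly, while containing arbitrarily many non-triangular faces; therefore your claimed per-face surpluses after discharging (every quadrilateral at least $1$, every face of length at least $6$ at least $9$) cannot all hold, and the inference ``if the total is below $18$ then there are boundedly many faces, hence $n$ is bounded and a finite check suffices'' collapses: the surplus must be exactly $0$ on all but a bounded amount of structure, which your rule does not deliver. In the paper this is where the sharp coefficients in $9v(B)-23e(B)+33f(B)\le0$ (with equality for $\Theta_4$ and for $K_4$ with three junction vertices) do the work, with no finite case analysis and no separate treatment of the additive constant. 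A correct repair of your approach would have to let long faces and the degree condition at junction vertices route strictly more charge to the triangles of $K_4$ blocks, which is essentially a per-face rewording of the paper's block inequality.
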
 We've also found a new extremal construction showing sharpness of Theorem \ref{thm:C5} by means of our proof technique.

Then, we introduce a similar method regarding quadrangular blocks. Using this method, we first study the maximum number of edges in bipartite planar graphs without $C_6$. Although $K_{2,n-2}$ is a trivially maximum construction since it reaches the maximum number of edges, and it does not contain any cycle other than $C_4$. However, it is a very special graph that has a lot of degree $2$ vertices. Therefore, we include the number of degree $2$ vertices in the upper bound of edges. Also, we would like to avoid triviality from $K_{2,n-2}$-like configurations by deleting degree $2$ vertices whose two neighbors both have degree greater than $3$ as long as each deletion does not create new degree $2$ vertices.
\begin{restatable}{theorem}{thmBiCSix}
\label{thm:biC6}
Let $G$ be a $C_6$-free planar bipartite graph on $n$ vertices. Then $\delta(G)\le 2$, and if any degree $2$ vertex $v$ in $G$ has a neighbor of degree at most $3$, then
$$
    e_G \leq \frac{3}{2}n+\frac{1}{2}k+\frac{1}{4}e_{2,3}-4,
$$
for all $n\ge6$, where $k$ is the number of degree $2$ vertices in $G$ and $e_{2,3}$ is the number of edges $xy$ in $G$ such that $d(x)=2$ and $d(y)=3$.
\end{restatable}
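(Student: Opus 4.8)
The plan is to derive both assertions from a single discharging argument, the edge bound doing essentially all the work. The inequality $\delta(G)\le 2$ is trivial for $n\le 5$, and for $n\ge 6$ it follows from the edge bound: if $\delta(G)\ge 3$ then $G$ has no vertex of degree $2$, so $k=e_{2,3}=0$, the bound would read $e_G\le\tfrac32 n-4$, and this contradicts $e_G=\tfrac12\sum_v d(v)\ge\tfrac32 n$. (There is also a direct proof of $\delta(G)\le 2$: reduce to the $2$-connected case, where every face has length $4$ or $\ge 8$, and play Euler's formula off against the restriction that $C_6$-freeness imposes on quadrilateral faces, worked out below.) So everything reduces to the edge bound, and for that we may assume $G$ is $2$-connected, passing to this case along cut vertices and treating the few small graphs directly.

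Fix a plane embedding and give each vertex $v$ the weight $\mu(v)=d(v)-3$ and each face $f$ the weight $\mu(f)=\tfrac12 l(f)-3$. By Euler's formula $v_G-e_G+f_G=2$ and $\sum_v d(v)=2e_G=\sum_f l(f)$ we get $\sum_v\mu(v)+\sum_f\mu(f)=-6$. Since $\sum_v d(v)=2e_G$, a one-line rearrangement shows that the desired bound $e_G\le\tfrac32 n+\tfrac12 k+\tfrac14 e_{2,3}-4$ is \emph{equivalent} to
\[
 k+\tfrac12 e_{2,3}+\sum_{f:\,l(f)\ge 8}\Bigl(\tfrac12 l(f)-3\Bigr)\ \ge\ F_4+2,
\]
where $F_4$ is the number of quadrilateral faces; here bipartiteness makes every face length even and $C_6$-freeness forbids length $6$, so each non-quadrilateral face has length $\ge 8$ and weight $\ge 1$. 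Thus the whole problem becomes: pay for the $F_4$ quadrilateral faces — one unit each — using the degree-$2$ vertices (worth $1$), the $(2,3)$-edges (worth $\tfrac12$), and the surplus weights of the long faces, leaving two units over.

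The structural input is an analysis of quadrilateral faces. Call two quadrilateral faces \emph{linked} if they share an edge, and call a \emph{quadrangular block} the subgraph spanned by a class under the transitive closure of linking. Bipartiteness forbids two linked quadrilaterals from sharing opposite edges (their union would be $K_4$), and $C_6$-freeness forbids them from sharing exactly one edge (traverse one, cross the shared edge, return along the other: a $C_6$), so linked quadrilaterals share two adjacent edges and hence span a $K_{2,3}$; iterating, every quadrangular block is a $K_{2,t}$ ($t\ge 2$) whose quadrilateral faces form a fan $xa_1ya_2,\,xa_2ya_3,\dots,xa_{t-1}ya_t$ about a diagonal pair $\{x,y\}$. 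Any interior fan vertex $a_i$ ($1<i<t$) lies on two of these faces, which pins down its rotation and forces $d(a_i)=2$; since its only neighbours are $x$ and $y$, with $d(x),d(y)\ge t$, the hypothesis that every degree-$2$ vertex has a neighbour of degree $\le 3$ forces $t\le 3$. So every quadrangular block is a single $C_4$ or a single $K_{2,3}$, and (once more by $C_6$-freeness) each is bordered from the outside only by faces of length $\ge 8$. A $K_{2,3}$-block supplies its interior degree-$2$ vertex and, since now one of $x,y$ has degree exactly $3$, at least one $(2,3)$-edge, so it almost covers its two faces; the leftover half-unit per $K_{2,3}$-block, and the single face of each $C_4$-block, are charged against the surplus of the long faces surrounding the blocks, and a global count then closes the inequality, including the ``$+2$''.

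The real difficulty is this last step: one must show that the faces of length $\ge 8$ surrounding the $C_4$- and $K_{2,3}$-blocks carry, in total, enough surplus $\tfrac12 l(f)-3$ to meet every demand placed on them, \emph{without overdrawing} any long face that happens to border several blocks at once. This calls for a local analysis of the rotations at the diagonal pairs $\{x,y\}$ and the fan ends $a_1,a_t$, together with a bound — again a consequence of $C_6$-freeness — on how many distinct blocks a long face of a given length can border. By comparison, the classification of quadrangular blocks, the equivalence of the edge bound with the displayed face inequality, and the connectivity and small-case reductions are all routine, if somewhat lengthy, bookkeeping.
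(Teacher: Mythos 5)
Your reduction of the edge bound to the face inequality $k+\tfrac12 e_{2,3}+\sum_{l(f)\ge 8}(\tfrac12 l(f)-3)\ \ge\ F_4+2$ is correct algebra, and your structural analysis of the quadrilateral faces (linked quadrilaterals share two adjacent edges, every quadrangular block is a $C_4$ or a $K_{2,3}$ under the degree-$2$ hypothesis, and all surrounding faces have length at least $8$) matches the paper's classification. But the proof stops exactly where the theorem actually lives: you never carry out the step you yourself call ``the real difficulty,'' namely showing that the surplus $\tfrac12 l(f)-3$ of the long faces can be distributed to the $C_4$- and $K_{2,3}$-blocks they border so as to cover every deficit without overdrawing a face that borders several blocks, \emph{and} so that two units are left over (this is where the constant $-4$ comes from; the inequality is tight on the extremal construction, so no slack is available). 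Phrases like ``a local analysis of the rotations \dots and a global count then closes the inequality'' are a description of what remains to be proved, not a proof; no rule for splitting a long face's surplus is given, no bound on how many blocks a long face can serve is established, and the outer face and the origin of the ``$+2$'' are never addressed. Since you also derive $\delta(G)\le 2$ from the (unproven) edge bound, and the advertised direct proof of $\delta(G)\le2$ is likewise only gestured at, neither assertion of the theorem is actually established by the proposal.

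For comparison, the paper closes precisely this gap with the quadrangular-block contribution method: each exterior edge of a block receives the fractional share $1/l(f)$ of its exterior face, each junction vertex is split equally among the blocks containing it, and the degree-$2$ vertices and $(2,3)$-edges are likewise assigned blockwise; the global inequality then follows from the single finite verification $2v(B)-4e(B)+8f(B)-2k(B)-e_{2,3}(B)\le 0$ for $B\in\{K_2,C_4,K_{2,3}\}$ (with subcases according to the number of junction vertices), summed over all blocks and combined with Euler's formula. Note also that the paper's blockwise inequality uses the vertex contributions $v(B)$ in an essential way, whereas your accounting discards the vertex weights entirely and pays with whole units of $k$ and half-units of $e_{2,3}$ against whole $4$-faces; it is not clear that this coarser ledger can be balanced at all, and in any case no balancing scheme is exhibited. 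To complete your argument you would need either to reproduce something equivalent to the paper's fractional sharing rules and case check, or to produce a genuinely different discharging scheme with an explicit verification that no long face is overcharged.
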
 Based on our proof technique, We've found constructions for infinitely many graphs showing sharpness of Theorem \ref{thm:biC6}.

This method also allows us to prove some other results:

\begin{restatable}{theorem}{thmBiCEight}
\label{thm:biC8}
Let $G$ be a $C_8$-free planar bipartite graph with $\delta(G) \geq 3$ on $n$ vertices. Then 
$$
    e_G \leq \frac{5}{3}n-\frac{10}{3}.
$$ The equality holds for infinitely many integers $n$.
\end{restatable}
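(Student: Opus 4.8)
The plan is to combine Euler's formula with a discharging (contribution) argument on the faces, in the spirit of the quadrangular‑block method, the real difficulty being a careful local analysis of how $C_8$‑freeness and the hypothesis $\delta(G)\ge 3$ restrict the face structure (the latter is what prevents the $K_{2,m}$‑type configurations that would wreck any such bound).

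First I would reduce to the case that $G$ is $2$‑connected. If $G$ has a cut vertex or a bridge one splits $G$ there and argues by induction on the number of blocks; the only subtlety is that the minimum‑degree hypothesis degrades at the splitting vertex, so one has to prove and use a slightly more general statement allowing a bounded number of low‑degree vertices, and then check that the bound is super‑additive under vertex‑sums and bridge‑sums, so that gluing pieces produces a graph satisfying $e_G\le\tfrac53 n-\tfrac{10}{3}$ with room to spare. So assume $G$ is $2$‑connected. Since $G$ is bipartite, every face boundary is a cycle of even length at least $4$, and since an $8$‑face would be a copy of $C_8$, no face has length exactly $8$. Using $f_G=e_G-n+2$ and $\sum_{f\in F(G)}l(f)=2e_G$, and assigning to each face $f$ the charge $\mu(f)=l(f)-5$, the total charge is $\sum_f\mu(f)=2e_G-5f_G=5n-10-3e_G$. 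Hence the theorem is equivalent to the assertion that the total charge is nonnegative, and the sharpness statement amounts to making the total charge vanish for infinitely many $n$.

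The heart of the argument is the following local picture, forced by $C_8$‑freeness together with $\delta(G)\ge 3$. A $4$‑face is adjacent to at most one further $4$‑face: if a $4$‑face met two others one would get either a ``strip'' of three $4$‑faces or a ``corner'' of three $4$‑faces around a common vertex, and in each case the outer boundary walk of the union is a cycle of length $8$ — here one must verify that this walk really is a simple cycle, which is where bipartiteness (killing the degenerate identifications that would create a triangle) and $\delta(G)\ge 3$ (killing the ones that would force a degree‑$2$ vertex, exactly as in the $K_{2,m}$ picture) are used. For the same reasons a $4$‑face and a $6$‑face can share an edge only in one exceptional configuration, in which they additionally share the vertex of the $4$‑face opposite the common edge, and this must be handled separately. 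In all other cases every edge of a $4$‑face that does not border another $4$‑face borders a face of length at least $10$ (it cannot border a $6$‑face, and an $8$‑face does not exist), so every $4$‑face has at least three edges bordering faces of length $\ge 10$. Now discharge: each face of length $\ell\ge 10$ sends $\tfrac12$ across every one of its edges that borders a $4$‑face. Then a $4$‑face ends with charge at least $-1+3\cdot\tfrac12>0$; a face of length $\ell\ge 10$ retains at least $(\ell-5)-\tfrac{\ell}{2}=\tfrac{\ell}{2}-5\ge 0$; a $6$‑face retains $+1>0$; the finitely many exceptional $4$–$6$ adjacencies are absorbed by a small extra rule. Summing, the total charge is nonnegative, giving $e_G\le\tfrac53 n-\tfrac{10}{3}$.

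The main obstacle is precisely the structural analysis of the previous paragraph: rigorously classifying how $4$‑faces can cluster and can meet longer faces, and proving that, apart from the one exceptional configuration, every ``short outer boundary'' of a cluster of faces is a genuine $C_8$ rather than a degenerate closed walk. The other piece of work is the extremal construction: for each admissible $n$ — necessarily $n\equiv 2\pmod 3$ and $n$ large, since $\tfrac53 n-\tfrac{10}{3}<\tfrac32 n$ for $n<20$ and already $e_G\ge\tfrac32 n$ rules out such graphs on few vertices — one must assemble a $2$‑connected bipartite planar graph with $\delta=3$ on which the discharging above is tight at every face, i.e. built so that its quadrilateral faces occur in dominoes bordered only by faces of length $10$ and its $6$‑faces occur in the one balanced way. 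Verifying that such a graph is $C_8$‑free — equivalently, that no closed walk around a cluster of its faces closes up into an $8$‑cycle — is the final point that needs care.
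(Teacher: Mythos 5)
Your overall framework is fine and is essentially the paper's: with charge $\mu(f)=l(f)-5$ the bound is equivalent to $5f_G\le 2e_G$, which is exactly the inequality the paper proves (grouped over quadrangular blocks rather than over individual faces; the paper itself remarks that here the block method degenerates to ordinary discharging). The problem is your central structural claim. You assert that a $4$-face can border at most one other $4$-face because a ``corner'' of three $4$-faces around a common vertex has an outer boundary of length $8$. It does not: three $4$-faces pairwise sharing an edge through a common degree-$3$ vertex have outer boundary a $6$-cycle (a hexagon with a central vertex joined to three alternating hexagon vertices), contain only $7$ vertices, and create no $C_8$. This configuration is not exotic -- it is precisely the block $Q_7$ in the paper, and the paper's extremal graphs are built largely out of such pieces. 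So your dichotomy (strip or corner, both forbidden) is false, and with it the claim that every $4$-face has at least three edges bordering faces of length $\ge 10$.

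The damage is not merely cosmetic. In the $Q_7$ configuration each $4$-face has only two edges on long faces, so under your rule it finishes with $-1+2\cdot\tfrac12=0$: the discharging is exactly tight there (as it must be, since these are the extremal pieces), leaving no slack. Consequently your two remaining hand-waves cannot be absorbed: (a) you must actually prove that the two outer edges of such a $4$-face border faces of length $\ge 10$ (they cannot border a fourth $4$-face or, generically, a $6$-face -- this is the content of the paper's Propositions on block size and on exterior face lengths, and it requires the simple-cycle/degenerate-identification checking you defer); and (b) the ``exceptional'' $4$-face/$6$-face adjacency, which you dismiss with an unspecified ``small extra rule,'' would make a starved $4$-face end at $-\tfrac12$ if it occurred next to a corner, so it must be ruled out or handled explicitly, not absorbed. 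Finally, the sharpness half of the statement is only gestured at; the paper gives an explicit recursive construction $G_k$ (with $270k+110$ vertices and $450k+180$ edges) built from $Q_7$ blocks, and some such explicit family, verified to be bipartite and $C_8$-free, is needed to complete the theorem as stated.
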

\begin{restatable}{theorem}{thmBiCEightCTen}
\label{thm:C8C10}
Let $G$ be a planar bipartite graph on $n$ vertices which does not contain $C_8$ or $C_{10}$ and let $\delta(G)\ge 3$. Then $$
    e_G \leq \frac{18}{11}n-\frac{84}{11}.
$$ The equality holds for infinitely many integers $n$.
\end{restatable}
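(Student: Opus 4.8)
The plan is to run the quadrangular-block variant of the contribution method of \cite{ghosh2020planar}. A routine reduction (handled, as usual, by an inductive formulation that tolerates a low-degree vertex at a cut vertex) lets us assume $G$ is $2$-connected: the bound $e_G\le\frac{18}{11}n-\frac{84}{11}$ is superadditive when two graphs are identified at a single vertex, because its additive constant exceeds its leading coefficient, so it is enough to treat each block, with the finitely many very small graphs checked directly. Fix a plane embedding of $G$. Since $G$ is $2$-connected and bipartite, every face is bounded by an even cycle, and since $G$ has no $C_8$ or $C_{10}$, no face has length $8$ or $10$, so $\ell(f)\in\{4,6\}$ or $\ell(f)\ge 12$ for each $f\in F(G)$. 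Feeding $\sum_{f}\ell(f)=2e_G$ into Euler's formula $v_G-e_G+f_G=2$ shows that the claimed bound is equivalent to the single charge inequality
\[
  \sum_{f\in F(G)}\bigl(7\ell(f)-36\bigr)\ \ge\ 96,
\]
in which $7\ell(f)-36$ equals $-8$ for a $4$-face, $6$ for a $6$-face, and is at least $48$ for a face of length at least $12$; so it suffices to prove this inequality.

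The second step is a structural analysis of the short faces. Call two faces \emph{adjacent} if they share an edge; two adjacent faces meeting in exactly one edge have their union bounded by a cycle of length $\ell_1+\ell_2-2$, so a $4$-face adjacent to a $6$-face would create a $C_8$, and two distinct adjacent $6$-faces a $C_{10}$. If two faces shared a path of length at least $2$, its interior vertices would have degree $2$, contradicting $\delta(G)\ge 3$; hence the boundary of every $4$-face is edge-disjoint from every $6$-face, and the boundaries of two distinct $6$-faces are edge-disjoint. Similarly, three $4$-faces in a row, or three pairwise adjacent $4$-faces at a common vertex, force a $C_8$ or (using bipartiteness and $\delta\ge3$) a vertex of degree $2$; hence every maximal \emph{quadrangular block} (a maximal subgraph all of whose faces are $4$-cycles) is either a single $4$-face or two $4$-faces glued along an edge. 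Consequently each $6$-face and each quadrangular block is \emph{insulated}: every boundary edge not internal to a two-face block lies on a face of length at least $12$ (or on the outer face, which we track separately). Moreover, along the boundary $6$-cycle of a two-face block the incident large faces cannot all coincide, because its degree-$2$ rim vertices carry edges of $G$ that separate these faces; so a two-face block abuts several distinct large faces.

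The third step is the discharging. Give each face $f$ the initial charge $7\ell(f)-36$, so that the total charge is precisely the quantity to be bounded below. Charge is then moved from faces of length at least $12$ (and, where the configuration permits, from $6$-faces) into the deficient $4$-faces by rules calibrated so that every face ends with non-negative charge---using insulation to guarantee that each boundary edge of a $4$-face abuts a face with charge to give---while leaving a fixed surplus of $96$ uncollected. To locate that surplus one uses that, apart from finitely many small graphs, $G$ must contain a face of length at least $12$: a $2$-connected bipartite planar graph with $\delta\ge 3$, all of whose faces have length $4$ or $6$ and in which no $4$-face meets a $6$-face and no two $6$-faces meet, has no $6$-faces at all, hence is a quadrangulation, and quadrangulations with $\delta\ge 3$ and no $C_8$ are bounded in size. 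Summing the final charges gives $\sum_{f}(7\ell(f)-36)\ge 96$, the desired bound. The matching construction is obtained by reverse-engineering the configurations that make the rules tight and tiling them periodically---large faces carrying pairwise non-adjacent $6$-faces and two-face quadrangular blocks in their gaps, glued at degree-$3$ rim vertices so that no $C_8$ or $C_{10}$ is ever closed---which yields equality for infinitely many $n$.

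The step I expect to be the main obstacle is the discharging. Verifying that the transfer rules leave \emph{every} face with non-negative charge demands a careful enumeration of local configurations, and the delicate ones are the two-face quadrangular blocks, where $\delta(G)\ge 3$ has to be used quantitatively to guarantee sufficient incident large-face charge, and the vertices at which several $6$-faces and blocks come together. A secondary difficulty is extracting the \emph{exact} additive constant $\frac{84}{11}$, equivalently the surplus $96$, rather than a mere $O(1)$, which hinges on pinpointing where the unavoidable positive charge resides.
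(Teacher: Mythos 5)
There is a genuine gap, in fact two. First, your structural classification of the quadrangular blocks is wrong: you claim that ``three pairwise adjacent $4$-faces at a common vertex'' force a $C_8$ or a degree-$2$ vertex, and conclude that every maximal union of $4$-faces is a single $4$-face or two $4$-faces glued along an edge. But the configuration of three $4$-faces pairwise sharing an edge around a common vertex of degree $3$ (a hexagon $v_1v_2\dots v_6$ plus a centre $c$ joined to $v_1,v_3,v_5$, the block called $Q_7$ in the paper) contains no cycle longer than $C_6$, and its three rim vertices of degree $2$ \emph{inside the block} can have further neighbours in $G$, so $\delta(G)\ge 3$ is not violated. This is not a peripheral omission: $Q_7$ is exactly the tight configuration in the paper's block inequality (its ``contribution'' is $0$), and the paper's extremal graphs attaining $e_G=\frac{18}{11}n-\frac{84}{11}$ are built from $K_2$ and $Q_7$ blocks. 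A discharging scheme calibrated to your (too restrictive) list of short-face configurations would be calibrated against the wrong tight cases and cannot be expected to close; indeed, if your classification were correct the extremal number would be strictly smaller than the stated bound, contradicting the construction.

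Second, the central step of your argument is not carried out. The reformulation $\sum_{f}(7\ell(f)-36)\ge 96$ is correct and equivalent to the claimed bound, and the adjacency restrictions between $4$-faces and $6$-faces are fine, but the actual discharging rules, the verification that every face ends non-negative, and the location of the exact surplus $96$ (a single face of length $\ge 12$ only carries initial charge $48$, and it must also feed adjacent $4$-faces) are all deferred; you yourself flag this as the expected obstacle. Likewise the ``reverse-engineer and tile'' description of the equality construction is not a construction. For comparison, the paper avoids the delicate face-level discharging altogether: it first shows (Proposition \ref{pro:C8C10K2Face}) that any $6$-face can be chorded into a $\Theta_6$ block without creating $C_8$ or $C_{10}$, so one may assume every exterior face of a quadrangular block has length at least $12$, and then checks a single linear inequality $24v(B)-31e(B)+42f(B)\le 0$ over the four block types $K_2$, $C_4$, $\Theta_6$, $Q_7$, which with Euler's formula gives the bound directly. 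Your face-charging route is viable in principle, but as written it rests on an incorrect structural lemma and omits the quantitative core of the proof.
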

\begin{restatable}{theorem}{thmTriCSix}
\label{thm:triC6}
Let $G$ be a $C_6$-free triangle-free planar graph with $\delta(G) \geq 3$ on $n$ vertices. Then 
$$
    e_G \leq \left\lfloor \frac{9}{5}n - 4 \right\rfloor.
$$ The equality holds for infinitely many integers $n$. 
\end{restatable}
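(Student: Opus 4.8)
The plan is to combine a short Euler‑formula estimate with a discharging step that exploits $C_6$‑freeness only through one local fact about $4$‑faces. First I would reduce to the case that $G$ is $2$‑connected: $G$ decomposes into $2$‑connected blocks, and since $\delta(G)\ge 3$ a smallest counterexample has no cut vertex (this is routine, though one has to be a little careful with cut vertices that have small degree in one of the two sides, and with bridges). Fix a plane embedding; then every face boundary is a cycle, there are no $3$‑faces because $G$ is triangle‑free, and no $6$‑faces because $G$ is $C_6$‑free. Writing $f_i$ for the number of faces of length $i$, and using Euler's formula $n-e_G+f_G=2$ together with $\sum_F l(F)=2e_G$ and $l(F)\ge 7$ for every face with $l(F)\notin\{4,5\}$, a couple of lines give
\[
 5e_G \;\le\; 3f_4 + 2f_5 + 7n - 14 .
\]

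The heart of the matter is to bound $3f_4+2f_5$. The key local observation is that no two $4$‑faces share an edge: if $4$‑faces $F_1,F_2$ shared an edge $uv$, then since $\delta(G)\ge 3$ they meet in exactly that edge, and $(F_1\triangle F_2)$ is a genuine $6$‑cycle, contradicting $C_6$‑freeness. Hence around any vertex $v$ no two consecutive faces (in the rotation) are both $4$‑faces, so if $s_4(v),s_5(v)$ denote the numbers of $4$‑faces and $5$‑faces incident with $v$, then $s_4(v)\le\lfloor d(v)/2\rfloor$ and $s_4(v)+s_5(v)\le d(v)$. An elementary maximization of $\tfrac34 s_4(v)+\tfrac25 s_5(v)$ under these two constraints (the coefficient of $s_4$ being the larger, one takes $s_4(v)=\lfloor d(v)/2\rfloor$ and $s_5(v)=\lceil d(v)/2\rceil$) yields, for every $v$ with $d(v)\ge 3$,
\[
 \tfrac34 s_4(v) + \tfrac25 s_5(v) \;\le\; \frac{15\,d(v)-14}{20}.
\]
Since a $4$‑face (resp.\ $5$‑face) is a cycle on $4$ (resp.\ $5$) distinct vertices, $4f_4=\sum_v s_4(v)$ and $5f_5=\sum_v s_5(v)$; summing the last display over all $v$ and using $\sum_v d(v)=2e_G$ gives $3f_4+2f_5\le\tfrac{15e_G-7n}{10}$. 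Substituting into the previous display and solving for $e_G$ gives $e_G\le\tfrac95 n-4$, and since $e_G\in\mathbb Z$ we conclude $e_G\le\lfloor\tfrac95 n-4\rfloor$.

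For sharpness I would exhibit, for every $n=5m$ with $m$ large, a $2$‑connected triangle‑free planar graph $G_m$ with no $C_6$ and $e_{G_m}=9m-4=\tfrac95 n-4$. Tracing equality back through the argument forces the structure of such a graph: all degrees lie in $\{3,4\}$, there are exactly $2m-2$ quadrilateral faces and $2m$ pentagonal faces, and the cyclic face‑length pattern around each vertex is $4,5,5$ (if $d(v)=3$) or $4,5,4,5$ (if $d(v)=4$). I would build $G_m$ by arranging the prescribed quadrilaterals and pentagons in a cyclic ``belt'', making sure that no two quadrilateral faces end up adjacent (that would create a $C_6$) and that no union of neighbouring faces produces a six‑cycle, and then check the parameters directly. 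I expect the two genuine obstacles to be (i) making the reduction to $2$‑connected graphs fully rigorous, and (ii) writing down this extremal family and verifying its $C_6$‑freeness — the local pattern is rigid, so closing up the belt globally while keeping it triangle‑ and $C_6$‑free takes some care. The discharging estimate in between is short and robust, and in particular it already shows there is no such graph for the small values of $n$ where $\tfrac32 n>\tfrac95 n-4$.
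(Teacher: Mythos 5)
Your upper-bound argument is correct and takes a genuinely different route from the paper. The paper runs its ``contribution method'' on quadrangular blocks (here only $K_2$ and $C_4$ occur) and proves $v(B)-5e(B)+10f(B)\le 0$ blockwise; you instead combine Euler's formula with a vertex-local count of face corners, using the same key fact (two $4$-faces sharing an edge would yield a $C_6$) to get $s_4(v)\le\lfloor d(v)/2\rfloor$. The arithmetic checks out: $5e_G\le 3f_4+2f_5+7n-14$, the per-vertex bound $\tfrac34 s_4(v)+\tfrac25 s_5(v)\le\frac{15d(v)-14}{20}$ for $d(v)\ge 3$ (with equality exactly at $d(v)\in\{3,4\}$), and the final substitution give $e_G\le\tfrac95 n-4$. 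Two small repairs: the claim that $F_1\triangle F_2$ is a genuine $6$-cycle needs triangle-freeness as well as $\delta\ge3$ (a further coincidence of vertices of the two $4$-faces forces either a second shared edge, hence a degree-$2$ vertex, or a triangle); and the reduction to $2$-connected graphs, which you call routine but is the least routine step as written (the cut vertex may have degree less than $3$ in one side), can simply be skipped: with $\delta\ge3$ every facial walk of length $4$ or $5$ is automatically a cycle, a facial walk of length $6$ would decompose into two triangles or a $C_6$, and $f_G\ge e_G-n+2$ holds for every plane graph, so your count works verbatim without any connectivity reduction. In exchange for a bit more bookkeeping you avoid setting up blocks and their contributions altogether, which is a real simplification for this particular theorem.

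The genuine gap is the second half of the statement, the sharpness. You do not construct the extremal family; you only list what equality would force and say you would close up a ``belt'' of quadrilaterals and pentagons, conceding that this ``takes some care.'' Moreover you aim at exact equality $e_G=\tfrac95 n-4$ with $5\mid n$, which by your own equality analysis forces an extremely rigid object: all faces of length $4$ or $5$, all degrees in $\{3,4\}$, prescribed corner patterns, and, beyond these local conditions, no $6$-cycle at all, facial or not. It is far from clear that such graphs exist. Note that equality in $\lfloor\tfrac95 n-4\rfloor$ does not require $\tfrac95 n-4$ to be an integer: the paper's extremal family has $n=10t+8\equiv 3\pmod 5$ and $e_G=18t+10=\lfloor\tfrac95 n\rfloor-4$, and it exploits exactly this slack (eight trivial $K_2$ blocks there have strictly negative contribution). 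So to finish you must either actually exhibit your rigid family and verify the absence of all hexagons, or relax to residues modulo $5$ where the floor leaves room, as the paper does; as it stands, the claim that equality holds for infinitely many $n$ is unproven in your proposal.
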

\begin{restatable}{theorem}{thmTriCEight}
\label{thm:triC8}
Let $G$ be a $C_8$-free triangle-free planar graph with $\delta(G) \geq 3$ on $n$ vertices. Then 
$$
    e_G \leq \frac{81}{44}n -\frac{105}{22}.
$$
\end{restatable}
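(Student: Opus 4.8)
The plan is to run the contribution (discharging) method of \cite{ghosh2020planar}, now organised around $4$-faces in place of triangular blocks. I would first reduce to the case that $G$ is $2$-connected (for a disconnected $G$ the bound follows componentwise, and a cut vertex is handled by splitting $G$ along it, the only subtlety being a copy of the cut vertex of degree $\le 2$ on one side, which is patched separately). With $G$ $2$-connected every face boundary is a cycle; triangle-freeness excludes faces of length $3$, and since a length-$8$ face boundary would be a $C_8$, $C_8$-freeness excludes faces of length exactly $8$, so every face has length in $\{4,5,6,7\}\cup\{9,10,\dots\}$. Feeding $2e_G=\sum_f l(f)$ into Euler's formula $n-e_G+f_G=2$ turns the target bound $e_G\le\frac{81}{44}n-\frac{105}{22}$ into the single weighted face inequality
$$\sum_{f\in F(G)}\bigl(37\,l(f)-162\bigr)\ \ge\ 96,$$
where the weight $37\,l(f)-162$ equals $-14$ on a $4$-face and takes the positive values $23,60,97,\dots$ on faces of length $5,6,7,\dots$. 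Thus everything reduces to showing that the $4$-faces, each in deficit by $14$, are covered by the surplus of the longer faces.

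The structural input, and the only place $C_8$-freeness and $\delta(G)\ge 3$ are used, is a lemma on how $4$-faces can meet. Since every edge lies on exactly two faces, two distinct $4$-faces can only share a single edge or a path of length $2$; the latter forces its middle vertex to have degree $2$ (both faces at that vertex would have to be the two $4$-faces), so by $\delta(G)\ge 3$ two distinct $4$-faces share \emph{at most one} edge. Next, if a $4$-face $Q$ shared an edge with each of two other $4$-faces, then — whether the two shared edges of $Q$ are opposite or adjacent — one exhibits an $8$-cycle through $Q$ and its two neighbours (these $8$-cycles are genuine precisely because the only way they could degenerate is the forbidden length-$2$ overlap). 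Hence each $4$-face is edge-adjacent to at most one other $4$-face: the edge-adjacency relation among $4$-faces is a matching, so \emph{every $4$-face has at least three edges bounding faces of length $\ge 5$}.

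The final step is the discharging itself: give each face $f$ the charge $\mu(f)=37\,l(f)-162$ and let every $4$-face pull charge evenly from the $\ge 3$ longer faces along its non-$4$-face edges, enough to reach charge $0$. A longer face of length $\ell$ is then drained across at most $\ell$ of its edges, so it stays nonnegative as soon as $3(37\ell-162)\ge 14\ell$, i.e.\ for $\ell\ge 6$; the only failure is a pentagon all five of whose edges border $4$-faces, which is overdrawn by exactly $\tfrac13$. I expect closing this pentagon case to be the main obstacle. The way out is to exploit that such a pentagon sits in a demonstrably sparse patch (there the number of edges is only about $\tfrac43$ times the number of vertices), so the $4$-faces around it can be made to collect the shortfall from their other, longer neighbours, or a small amount of charge can be rerouted through the vertices, every one of which has degree $\ge 3$; the precise tuning of the pull-amounts and of the treatment of pentagon-rich regions is what fixes the constant $\tfrac{81}{44}$ (and is also why no matching construction is claimed). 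The additive term $-\tfrac{105}{22}$ — equivalently, getting ``$\ge 96$'' above rather than merely ``$\ge 0$'' — comes from tracking the charge still left over once the rules have been applied, exactly as in the proof of Theorem~\ref{thm:triC6}.
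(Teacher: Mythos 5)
Your reduction of the bound to the weighted face inequality $\sum_{f}(37\,l(f)-162)\ge 96$ is correct, but the structural lemma on which the whole discharging rests is false. You claim that a $4$-face cannot be edge-adjacent to two other $4$-faces because this would exhibit a $C_8$, the only possible degeneration being a length-$2$ overlap between $Q$ and one of its neighbours. The degeneration you have not excluded is an overlap between the two \emph{neighbours} $Q_1$ and $Q_2$ themselves, and it really occurs: take the block $Q_7$ of Figure \ref{fig:quadraBlocksTriCEight}, a hexagon $v_1\dots v_6$ with a centre $x$ joined to $v_1,v_3,v_5$. Its three $4$-faces pairwise share an edge through $x$, so each of them is edge-adjacent to \emph{two} other $4$-faces, yet the graph has only $7$ vertices (the would-be $8$-cycle collapses to the outer hexagon because $Q_1$ and $Q_2$ share the edge $xv_5$), so no $C_8$ arises; and this configuration is compatible with triangle-freeness, $C_8$-freeness and $\delta\ge3$ — indeed it is the building block of the construction in Figure \ref{fig:triC8Construction}, which achieves $e_G\approx\frac95 n$. (A similar degeneration with the two shared edges of $Q$ opposite, where $Q_1$ and $Q_2$ share only a vertex, is also not ruled out by your argument.) Consequently ``every $4$-face has at least three edges bounding faces of length $\ge5$'' fails, the deficit of $14$ per $4$-face cannot be spread over three long faces, and the discharging as designed does not start.

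Even setting that aside, the proposal does not reach the stated constant: the pentagon whose five edges all border $4$-faces is left ``overdrawn by $\tfrac13$'' with only a sketch of a repair, and the strengthening from total charge $\ge 0$ to $\ge 96$ is asserted rather than proved, so $\tfrac{81}{44}n-\tfrac{105}{22}$ is never actually derived. The paper's proof avoids both problems by not discharging face-by-face at all: it groups the $4$-faces into quadrangular blocks (here $K_2$, $C_4$, $\Theta_6$, $Q_7$), lets each block carry vertex, edge and face contributions, and proves $24v(B)-61e(B)+105f(B)\le 0$ for every block; the vertex term is what absorbs exactly the $Q_7$ clusters that your matching lemma wrongly forbids (they are the blocks with contribution $0$). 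If you want to salvage a face-only discharging you would have to build the $Q_7$-type clusters of mutually adjacent $4$-faces into the rules from the start, which essentially reproduces the block decomposition.
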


\section{A new proof of \texorpdfstring{\cref{thm:C5}}{Theorem \ref{thm:C5}}}
\label{sec:C5}
\subsection{Definitions and preparatory propositions}
\begin{definition}
\label{def:triblock}
Let $G$ be a plane graph. For an edge $e\in E(G)$, if it's not contained in any of the $3$-faces of $G$, then we call it a {\bf trivial triangular block}. Otherwise, we perform the following algorithm to construct a {\bf triangular block} $B$.
\begin{algorithm}[h] 
$B\gets(V(e),e)$\;
    \While{there exists an edge in $B$ such that it is in a bounded $3$-face of $G$ which is not contained in $B$}
    {add all the edges of such bounded $3$-faces to $B$;}
    Output $B$;
\end{algorithm}

Notice that the resulting triangular block does not depend on the choice of starting edge as long as the starting edge is in the triangular block.
\end{definition}

Now, we show that there are only four possible kinds of blocks in a $C_5$-free plane graph. This needs the following proposition.

\begin{proposition}
A triangular block in a $C_5$-free plane graph contains at most $4$ vertices.
\end{proposition}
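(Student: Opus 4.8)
The plan is to argue by contradiction: suppose some triangular block $B$ in a $C_5$-free plane graph $G$ contains at least $5$ vertices, and extract from $B$ a copy of $C_5$.

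First I would reduce to the case where exactly $5$ vertices are involved, by running the construction algorithm of \cref{def:triblock}. Since every bounded $3$-face added to the current partial block shares a whole edge (hence two vertices) with it, each step of the algorithm increases the number of vertices by $0$ or $1$. Starting from a single triangle ($3$ vertices), the vertex count therefore takes every value between $3$ and $v_B$; in particular there is a stage at which the partial block $B'$ has exactly $5$ vertices, and $B'$ is a subgraph of $B\subseteq G$. It suffices to find a $C_5$ inside $B'$.

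Next I would pin down the structure of $B'$. Let $B''$ be the partial block one step before $B'$, so $B''$ has exactly $4$ vertices. Tracing the algorithm: the first time the partial block reaches $3$ vertices it is a single triangle (two triangles sharing an edge already use $4$ vertices), the step that then raises the count to $4$ glues a triangle along an edge and produces a copy of $K_4-e$, and every later step before $B'$ only glues triangles among those same $4$ vertices, i.e. adds edges of $K_4$. Hence $B''$ is either $K_4-e$ or $K_4$. Now $B'$ is obtained from $B''$ by gluing one further triangle $T$, containing the new fifth vertex $e$, along an edge of $B''$ that still has a free side in the plane; using the planar structure of $B''$ (its interior edges already have triangular $G$-faces on both sides), this edge lies on the outer cycle of $B''$, and the automorphisms of $B''$ act transitively on those edges, so up to relabeling there is essentially one case for each choice of $B''$.

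Finally, in each of the two resulting graphs on $\{a,b,c,d,e\}$ I would just exhibit a Hamiltonian cycle: if $B''=K_4-cd$ and $T$ is glued on the edge $ac$ (so $T=ace$), take $e$--$a$--$d$--$b$--$c$--$e$; if $B''=K_4$ and $T$ is glued on the edge $ab$ (so $T=abe$), take $e$--$a$--$c$--$d$--$b$--$e$. In both cases all five edges are present, yielding a $C_5$ in $B'\subseteq G$, the desired contradiction; hence every triangular block of a $C_5$-free plane graph has at most $4$ vertices. I expect the main obstacle to be the bookkeeping in the structural step: making rigorous that the gluing edge of $T$ must sit on the outer face of $B''$ and that no $4$-vertex partial block other than $K_4-e$ and $K_4$ can arise. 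Once that is settled, the final case check is routine.
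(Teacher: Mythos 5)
Your proof is correct and takes essentially the same route as the paper's own (much terser) argument: grow the block one $3$-face at a time, note that the four-vertex stage must be $K_4-e$ or $K_4$, and observe that attaching a further $3$-face with a new fifth vertex --- necessarily along an exterior edge, since interior edges already have both incident $G$-faces inside the block --- produces a $C_5$. Your version just makes explicit the exterior-edge justification and the final Hamiltonian-cycle check that the paper leaves implicit.
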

\begin{proof}
Consider a triangular block having more than $4$ vertices. By the definition of triangular blocks, it must be initiated from two adjacent $3$-faces sharing one common edge.
The only way to attach another $3$-face to it while not creating a $C_5$ or multiple edges is to add an edge so that the triangular block becomes a $K_4$. Now adding a $3$-face in any other manner would create a $C_5$. 

\end{proof}

Now we describe all the possible triangular blocks. As shown in Figure \ref{fig:triBlocks}, other than the trivial triangular block $K_2$, there is one triangular block on $3$ vertices $K_3$ and two triangular blocks on $4$ vertices $K_4$ and $\Theta_4$. 
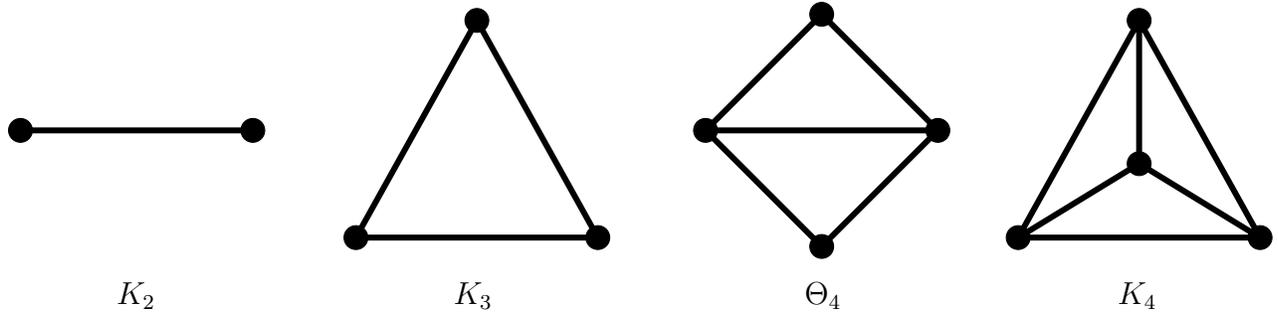
\begin{figure}[H]
\centering
\begin{tikzpicture}[x=0.75pt,y=0.75pt,yscale=-1,xscale=1]

\draw  [line width=2.25]  (592,88) -- (653,197.61) -- (531,197.61) -- cycle ;
\draw [line width=2.25]    (592,88) -- (592,160.3) ;
\draw [line width=2.25]    (592,160.3) -- (531,197.61) ;
\draw [line width=2.25]    (653,197.61) -- (592,160.3) ;
\draw  [line width=2.25]  (431.91,84.91) -- (490.5,143.5) -- (431.91,202.09) -- (373.32,143.5) -- cycle ;
\draw [line width=2.25]    (373.32,143.5) -- (490.5,143.5) ;
\draw  [fill={rgb, 255:red, 0; green, 0; blue, 0 }  ,fill opacity=1 ] (586,88) .. controls (586,84.69) and (588.69,82) .. (592,82) .. controls (595.31,82) and (598,84.69) .. (598,88) .. controls (598,91.31) and (595.31,94) .. (592,94) .. controls (588.69,94) and (586,91.31) .. (586,88) -- cycle ;
\draw  [fill={rgb, 255:red, 0; green, 0; blue, 0 }  ,fill opacity=1 ] (525,197.61) .. controls (525,194.3) and (527.69,191.61) .. (531,191.61) .. controls (534.31,191.61) and (537,194.3) .. (537,197.61) .. controls (537,200.93) and (534.31,203.61) .. (531,203.61) .. controls (527.69,203.61) and (525,200.93) .. (525,197.61) -- cycle ;
\draw  [fill={rgb, 255:red, 0; green, 0; blue, 0 }  ,fill opacity=1 ] (647,197.61) .. controls (647,194.3) and (649.69,191.61) .. (653,191.61) .. controls (656.31,191.61) and (659,194.3) .. (659,197.61) .. controls (659,200.93) and (656.31,203.61) .. (653,203.61) .. controls (649.69,203.61) and (647,200.93) .. (647,197.61) -- cycle ;
\draw  [fill={rgb, 255:red, 0; green, 0; blue, 0 }  ,fill opacity=1 ] (367.32,143.5) .. controls (367.32,140.19) and (370.01,137.5) .. (373.32,137.5) .. controls (376.64,137.5) and (379.32,140.19) .. (379.32,143.5) .. controls (379.32,146.81) and (376.64,149.5) .. (373.32,149.5) .. controls (370.01,149.5) and (367.32,146.81) .. (367.32,143.5) -- cycle ;
\draw  [fill={rgb, 255:red, 0; green, 0; blue, 0 }  ,fill opacity=1 ] (484.5,143.5) .. controls (484.5,140.19) and (487.19,137.5) .. (490.5,137.5) .. controls (493.81,137.5) and (496.5,140.19) .. (496.5,143.5) .. controls (496.5,146.81) and (493.81,149.5) .. (490.5,149.5) .. controls (487.19,149.5) and (484.5,146.81) .. (484.5,143.5) -- cycle ;
\draw  [fill={rgb, 255:red, 0; green, 0; blue, 0 }  ,fill opacity=1 ] (425.91,202.09) .. controls (425.91,198.77) and (428.6,196.09) .. (431.91,196.09) .. controls (435.23,196.09) and (437.91,198.77) .. (437.91,202.09) .. controls (437.91,205.4) and (435.23,208.09) .. (431.91,208.09) .. controls (428.6,208.09) and (425.91,205.4) .. (425.91,202.09) -- cycle ;
\draw  [fill={rgb, 255:red, 0; green, 0; blue, 0 }  ,fill opacity=1 ] (425.91,84.91) .. controls (425.91,81.6) and (428.6,78.91) .. (431.91,78.91) .. controls (435.23,78.91) and (437.91,81.6) .. (437.91,84.91) .. controls (437.91,88.23) and (435.23,90.91) .. (431.91,90.91) .. controls (428.6,90.91) and (425.91,88.23) .. (425.91,84.91) -- cycle ;
\draw  [fill={rgb, 255:red, 0; green, 0; blue, 0 }  ,fill opacity=1 ] (586,160.3) .. controls (586,156.98) and (588.69,154.3) .. (592,154.3) .. controls (595.31,154.3) and (598,156.98) .. (598,160.3) .. controls (598,163.61) and (595.31,166.3) .. (592,166.3) .. controls (588.69,166.3) and (586,163.61) .. (586,160.3) -- cycle ;
\draw [line width=2.25]    (27.82,143.5) -- (145,143.5) ;
\draw  [fill={rgb, 255:red, 0; green, 0; blue, 0 }  ,fill opacity=1 ] (21.82,143.5) .. controls (21.82,140.19) and (24.51,137.5) .. (27.82,137.5) .. controls (31.14,137.5) and (33.82,140.19) .. (33.82,143.5) .. controls (33.82,146.81) and (31.14,149.5) .. (27.82,149.5) .. controls (24.51,149.5) and (21.82,146.81) .. (21.82,143.5) -- cycle ;
\draw  [fill={rgb, 255:red, 0; green, 0; blue, 0 }  ,fill opacity=1 ] (139,143.5) .. controls (139,140.19) and (141.69,137.5) .. (145,137.5) .. controls (148.31,137.5) and (151,140.19) .. (151,143.5) .. controls (151,146.81) and (148.31,149.5) .. (145,149.5) .. controls (141.69,149.5) and (139,146.81) .. (139,143.5) -- cycle ;
\draw  [line width=2.25]  (258,88) -- (319,197.61) -- (197,197.61) -- cycle ;
\draw  [fill={rgb, 255:red, 0; green, 0; blue, 0 }  ,fill opacity=1 ] (252,88) .. controls (252,84.69) and (254.69,82) .. (258,82) .. controls (261.31,82) and (264,84.69) .. (264,88) .. controls (264,91.31) and (261.31,94) .. (258,94) .. controls (254.69,94) and (252,91.31) .. (252,88) -- cycle ;
\draw  [fill={rgb, 255:red, 0; green, 0; blue, 0 }  ,fill opacity=1 ] (191,197.61) .. controls (191,194.3) and (193.69,191.61) .. (197,191.61) .. controls (200.31,191.61) and (203,194.3) .. (203,197.61) .. controls (203,200.93) and (200.31,203.61) .. (197,203.61) .. controls (193.69,203.61) and (191,200.93) .. (191,197.61) -- cycle ;
\draw  [fill={rgb, 255:red, 0; green, 0; blue, 0 }  ,fill opacity=1 ] (313,197.61) .. controls (313,194.3) and (315.69,191.61) .. (319,191.61) .. controls (322.31,191.61) and (325,194.3) .. (325,197.61) .. controls (325,200.93) and (322.31,203.61) .. (319,203.61) .. controls (315.69,203.61) and (313,200.93) .. (313,197.61) -- cycle ;
\draw (75,219) node [anchor=north west][inner sep=0.75pt]  [font=\Large] [align=left] {$K_2$};
\draw (245,219) node [anchor=north west][inner sep=0.75pt]  [font=\Large] [align=left] {$K_3$};
\draw (422,219) node [anchor=north west][inner sep=0.75pt]  [font=\Large] [align=left] {$\Theta_4$};
\draw (580,219) node [anchor=north west][inner sep=0.75pt]  [font=\Large] [align=left] {$K_4$};
\end{tikzpicture}
\caption{All the possible triangular blocks}
\label{fig:triBlocks}
\end{figure}

\begin{definition}
\label{def:triSettings}
Let $G$ be a plane graph and $B$ be a triangular block.
\begin{enumerate}
    \item A vertex in $G$ is called a {\bf triangular block junction vertex} if it's contained in at least two different triangular blocks in $G$.
    \item A vertex or an edge of $B$ is called {\bf triangular block exterior} if it lies on the boundary of the exterior face of $B$ and it's called {\bf triangular block interior} otherwise. A face is called {\bf triangular block exterior} if it is not contained by $B$ while it has at least one common edge with $B$ and it's called {\bf triangular block interior} if it's contained by $B$.
\end{enumerate}
\end{definition}

In this section, for the sake of brevity in expression, {\bf triangular junction vertex} is abbreviated as {\bf junction vertex}, {\bf triangular block exterior} is abbreviated as {\bf exterior} and {\bf triangular block interior} is abbreviated as {\bf interior}.

\begin{definition}
\label{def:tricontributionvertex}
Let $G$ be a plane graph and $B$ is a triangular block in $G$. We denote the contribution to the vertex number of a vertex $v$ in $B$ by $n_B(v)$, and define it as$$
n_B(v) = \frac{1}{\#\textit{ triangular blocks in } G \textit{ containing } v}.
$$
The contribution of $B$ to the vertex number is defined as $$v(B)=\sum_{v\in V(B)}n_B(v).$$
\end{definition}

By definition we have $$
v_G=\sum_{v\in V(G)} 1=\sum_{v\in V(G)} \sum_{B\ni v}n_B(v)=\sum_{B\in\mathcal{T}}\sum_{v\in V(B)}n_B(v)=\sum_{B\in\mathcal{T}}v(B),
$$ where $\mathcal{T}$ is the family of all triangular blocks in $G$.

We define the contribution of $B$ to the edge number as the number of edges in $B$, and we denote it by $e(B)$. Since the triangular blocks are edge-disjoint by definition, and each edge in $G$ is included in a triangular block, we have $$e_G=\sum_{B\in\mathcal{T}}e(B).$$

Each face in $G$ is either an interior face of a unique triangular block or an exterior face of some triangular blocks. We denote the interior faces in $G$ by $F_I(G)$ and we denote the exterior faces in $G$ by $F_E(G)$. For an exterior face $f$, if its boundary contains exactly two consecutive exterior edges of a triangular block $B$ that is a $K_4$, we replace them by the other exterior edge of $B$ to get a smaller cycle. We do this as long as the cycle still contains two consecutive exterior edges from the same $K_4$ triangular block. We call the resulting cycle \textbf{exterior pseudoface} and denote it by $C_f$ and its length by $l'(f)$. A similar concept can be found in \cite{ghosh2020planar2} as face \textbf{refinement}.

\begin{remark}
Observe that if there is no triangular block being $K_4$ satisfying the previous description, an exterior pseudoface is just a face in the ordinary sense.
\end{remark}

\begin{definition}
\label{def:triContributionFace}
Let $G$ be a plane graph and $B$ is a triangular block in $G$. For each exterior edge $e$ of $B$, we denote its contribution to the face number in $B$ by $f_B(e)$. Note that $e$ can be the edge of at most two exterior faces, we define it as follows.
\begin{enumerate}
    \item If $e$ is the only edge of a $K_2$ triangular block and it's contained in the boundary of two exterior faces $f_1$ and $f_2$, then let $f_B(e)=1/l'(f_1)+1/l'(f_2)$.
    \item If two consecutive exterior edges $e_1,e_2$ of a $K_4$ triangular block $B$ are contained in the boundary of the same exterior face $f$, then let $f_B(e_1)+f_B(e_2)=1/l'(f)$.
    \item Otherwise, let $f_B(e)=1/l'(f)$ where $f$ is the exterior face containing $e$.
\end{enumerate}
The contribution of $B$ to the face number is defined as $$f(B)=\#\textit{ interior faces in $B$ } + \sum_{\textit{ \shortstack{e is an\\ exterior edge\\ of $B$ }}}f_B(e).$$
\end{definition}

 By definition we have \begin{align*}
    f_G& =\sum_{f\in F(G)}1=\sum_{f\in F_I(G)}1+\sum_{f\in F_E(G)}1
=\sum_{B\in\mathcal{T}}\#\textit{ interior faces in $B$}
+\sum_{f\in F_E(G)}\frac{l'(f)}{l'(f)}\\
 & =\sum_{B\in\mathcal{T}}\#\textit{ interior faces in $B$}
+\sum_{B\in\mathcal{T}}\sum_{\textit{\shortstack{ e is an \\ exterior edge\\ of $B$}}}f_B(e)
=\sum_{B\in\mathcal{T}}f(B)
 \end{align*}

\subsection{Proof of Theorem \ref{thm:C5}}~

We may assume that $\delta(G)\ge 3$ and $G$ is $2$-connected. For other graphs not satisfying this assumption, technical details can be found in \cite{dowden2016extremal}. Now our goal is the following theorem:
\begin{theorem}
\label{thm:C5Essential}
Let $G$ be a $2$-connected $C_5$-free planar graph with $\delta(G)\ge 3$ on $n$ vertices. Then $$
e_G\le \frac{12n-33}{5}.
$$ 
\end{theorem}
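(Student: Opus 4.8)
The plan is to set up a discharging argument using the triangular block framework developed in the preceding subsection, following the ``contribution method'' of \cite{ghosh2020planar}. The starting point is Euler's formula, which for a $2$-connected plane graph reads $v_G - e_G + f_G = 2$. Rewriting everything as a sum over triangular blocks via the identities $v_G = \sum_{B} v(B)$, $e_G = \sum_{B} e(B)$ and $f_G = \sum_{B} f(B)$, Euler's formula becomes $\sum_{B \in \mathcal{T}} \bigl(v(B) - e(B) + f(B)\bigr) = 2$. The goal $e_G \le \frac{12n-33}{5}$ is equivalent to $5e_G - 12 v_G \le -33$, so I would aim to bound $\sum_B \bigl(12\, v(B) - 5\, e(B) - \lambda f(B)\bigr)$ for a suitable coefficient $\lambda$ chosen to make the local (per-block) inequalities work out, and then feed in Euler's formula together with a global lower bound on $f_G$ (or on the number/lengths of exterior pseudofaces) to close the gap and produce the $-33$ constant.

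Next I would establish the per-block inequalities. Since a $C_5$-free plane graph has only the four block types $K_2$, $K_3$, $K_4$, $\Theta_4$ (by the proposition proved above), there are only finitely many cases. For each block type $B$ I would compute $e(B)$ exactly ($1, 3, 6, 5$ respectively) and bound $v(B)$ from above (at most the number of vertices, $2, 3, 4, 4$, with the true value smaller when vertices are junction vertices shared with other blocks) and $f(B)$ appropriately using Definition~\ref{def:triContributionFace}. The key structural inputs are: $\delta(G) \ge 3$ forces each block to meet enough other blocks or long faces at its exterior vertices, and $C_5$-freeness forces every exterior pseudoface to have length $\ge 4$ (a length-$3$ exterior face would, together with the geometry of an adjacent block, create a $C_5$; more precisely after the $K_4$-refinement one shows no pseudoface of length $3$ survives outside of the genuine triangular blocks, and also length-$4$ pseudofaces are constrained). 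Using $l'(f) \ge 4$ for exterior pseudofaces gives $f_B(e) \ge$ suitable fractions of each exterior edge's contribution, and these combine into a bound of the form $12\,v(B) - 5\,e(B) - \lambda f(B) \le 0$ (or a small nonpositive constant) for every block, with the correct $\lambda$ read off from the worst case.

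Then I would assemble the global bound: summing the per-block inequalities and using $\sum_B (v(B) - e(B) + f(B)) = 2$ yields a bound of the form $12 v_G - 5 e_G \ge \lambda \cdot 2$ plus error terms, which I would refine by a more careful accounting at exterior pseudofaces of length exactly $4$ and at junction vertices to sharpen the constant from a crude value up to the required $33/5$. The bookkeeping at shared exterior edges — an edge of a $K_2$ block bordering two exterior faces, versus two consecutive exterior edges of a $K_4$ bordering one face — is exactly what Definition~\ref{def:triContributionFace} is designed to handle, so this part should be mechanical once the face-length lower bounds are in place.

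The main obstacle, I expect, will be pinning down the exact behaviour of short exterior pseudofaces and extracting the sharp additive constant $-33$ rather than merely the leading term $12n/5$. The leading-order inequality $e_G \lesssim 12n/5$ falls out fairly directly from ``all exterior pseudofaces have length $\ge 4$ and $\delta(G)\ge 3$,'' but getting the precise constant $-33/5$ requires either identifying that a global configuration forcing near-equality must have several length-$\ge 5$ faces or enough ``extra'' structure near the outer face, and carefully propagating that saving through the block sum. Controlling junction vertices (so that $v(B)$ is genuinely smaller than the vertex count for blocks that are glued together) and ruling out degenerate small cases — which is presumably why the hypothesis $n \ge 11$ appears, handled separately via \cite{dowden2016extremal} — is the delicate part; the rest is a finite check over four block types.
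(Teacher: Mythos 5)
Your overall framework (a per-block linear inequality in $v(B),e(B),f(B)$ summed over all triangular blocks and combined with Euler's formula) is exactly the paper's approach, but as written the plan has a genuine gap at the decisive step: the face-length lower bound. You only commit to ``every exterior pseudoface has length $\ge 4$'' with a vague remark that length-$4$ pseudofaces are ``constrained.'' The paper's proof hinges on the much stronger Proposition~\ref{pro:C5}: an exterior pseudoface of a \emph{nontrivial} block has length at least $6$, because a length-$4$ exterior face sharing one edge with an interior $3$-face of the block creates a $C_5$, length $5$ is excluded outright, and length $3$ is excluded by the block definition (or, for a $K_4$ pseudoface, again creates a $C_5$). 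This is not a refinement you can postpone: to get the exact bound $e_G\le(12n-33)/5$ from a per-block inequality plus Euler, the coefficients are forced (up to scaling) to be $9v(B)-23e(B)+33f(B)\le 0$ as in Lemma~\ref{lem:C5}, and with only $l'(f)\ge 4$ this inequality is false for $K_3$, $\Theta_4$ and $K_4$ blocks (e.g.\ for $\Theta_4$ one would get $f(B)\le 2+4/4=3$ and $9\cdot 3-23\cdot 5+33\cdot 3=11>0$). Similarly, for a trivial $K_2$ block bordering a $4$-face you need the paper's extra observation that then every block around that $4$-face is a $K_2$ and each of its junction vertices lies in at least three blocks (so $v(B)\le 2/3$); with just $v(B)\le 1$ and $f(B)\le 1/2$ the $K_2$ case also fails. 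None of these load-bearing facts appear in your plan.

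A secondary misconception: you expect the additive constant $-33$ to require ``extra savings'' near the outer face or from length-$\ge 5$ faces propagated through the block sum. It does not. Once the per-block inequality $9v(B)-23e(B)+33f(B)\le 0$ holds for every block, summing and substituting $f_G=2-v_G+e_G$ gives $10e_G\le 24v_G-66$, i.e.\ $e_G\le(12n-33)/5$ with the constant arising mechanically from the term $33\cdot 2$ in Euler's formula. So the difficulty is entirely in the local inequalities (and hence in Proposition~\ref{pro:C5} and the $K_2$/$4$-face case analysis), not in any global sharpening; supplying those pieces would turn your outline into the paper's proof.
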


The extremal construction below shows that the bound in Theorem \ref{thm:C5Essential} is sharp. In the extremal construction, there are $15t^2-6$ vertices and $36t^2-21$ edges, which satisfy that $e_G=(12v_G-33)/5$.

\begin{figure}[H]
    \centering
    \includegraphics{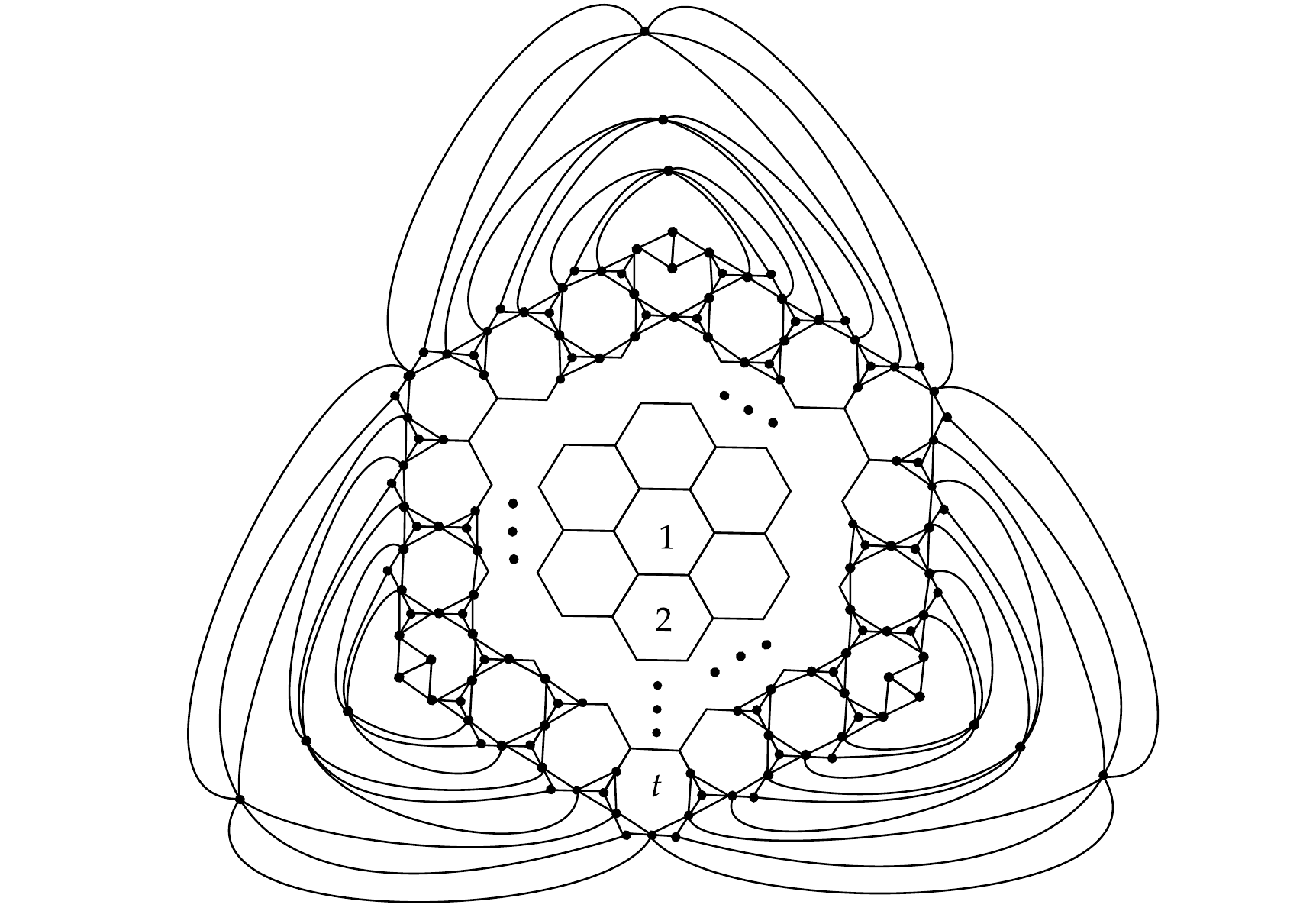}
    \caption{New Extremal Construction of Theorem \ref{thm:C5}.}
    \label{fig:C5Extremal}
\end{figure}

\begin{remark}
This construction comes from the proof of Lemma \ref{lem:C5}. In the extremal graph, each triangular block has $0$ ``contribution".
\end{remark}

The following proposition is useful in the proof.

\begin{proposition}
\label{pro:C5}
Let $G$ be a $2$-connected $C_5$-free plane graph with $\delta(G)\ge 3$ and $B$ is a nontrivial triangular block, then its exterior pseudofaces have length at least $6$.
\end{proposition}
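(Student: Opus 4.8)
The plan is to argue by contradiction. Suppose $B$ is a nontrivial triangular block of $G$ and $C_f$ is an exterior pseudoface of $B$ with $l'(f)\le 5$. Since $G$ is $2$-connected, $\partial f$ is a cycle, and the refinement turning $\partial f$ into $C_f$ (replacing two consecutive exterior edges of a $K_4$ block by the third edge of that $K_4$) replaces a cycle by a cycle all of whose edges are genuine edges of $G$; hence $C_f$ is a cycle in $G$ of length $l'(f)\in\{3,4,5\}$. A cycle of length $5$ would be a forbidden $C_5$, so it remains to rule out $l'(f)=3$ and $l'(f)=4$. We may also assume $B\subsetneq G$: if $B=G$ then, since $\delta(G)\ge 3$, the only possibility is $G=K_4$, whose single triangular block has no exterior pseudoface.

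The heart of the matter is $l'(f)=4$. As $f$ is an exterior face of $B$, $\partial f$ meets $B$ in at least one exterior edge of $B$; since distinct triangular blocks are edge-disjoint and a refinement only alters edges of $K_4$ blocks, $C_f$ still contains some exterior edge $pq$ of $B$. Because $B$ is nontrivial, $pq$ lies on a $3$-face of $B$, so $p$ and $q$ have a common neighbour $s\in V(B)$ with $s\notin\{p,q\}$ and $sp,sq\in E(G)$. Writing the $4$-cycle $C_f$ in cyclic order as $p,q,v_1,v_2$: if $s\notin\{v_1,v_2\}$, then $s,p,v_2,v_1,q$ are five distinct vertices and $spv_2v_1q$ is a $5$-cycle, with edges $sp$, $pv_2$, $v_2v_1$, $v_1q$, $qs$ — a contradiction. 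When $B=K_4$ one may take $s$ to be the central vertex of $B$, which is interior to $B$ and therefore never lies on $C_f$, so this case is complete. When $B\in\{K_3,\Theta_4\}$ and $s\in\{v_1,v_2\}$, say $s=v_1$, then $C_f$ carries the two edges $pq$ and $qs$ of $B$, meeting at the vertex $q$ of $B$. If $B=K_3$, a rotation argument at each vertex (using $\delta(G)\ge 3$) shows that an exterior face of a $K_3$ block is incident to exactly one of its three edges, a contradiction. If $B=\Theta_4$, then the $3$-face $pqs$ is one of the two triangular faces of $\Theta_4$, and either one of $pq,qs$ is the chord of $\Theta_4$ — impossible, since the chord borders only interior faces of $B$ and so cannot lie on the exterior face $f$ — or $q$ is the unique degree-$2$ vertex of $\Theta_4$ on the triangle $pqs$, whose only $B$-edges are $pq$ and $qs$; but then $f$ and the interior face $pqs$ occupy the two sides of the path $p,q,s$ at $q$ and fill the whole rotation there, forcing $d(q)=2$ and contradicting $\delta(G)\ge 3$.

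Finally, suppose $l'(f)=3$, so $C_f$ is a triangle. If $C_f=\partial f$, then $f$ is a $3$-face sharing an edge with $B$; unless $f$ is the outer face this $3$-face is bounded, so by the algorithm of \cref{def:triblock} all of its edges are added to the block containing that shared edge, namely $B$, whence $f$ is an interior face of $B$ — a contradiction. (If $f$ is the outer face and its boundary is a triangle meeting a nontrivial block, a short check with $\delta(G)\ge 3$ produces either a degree-$2$ vertex or a short face that is a $C_5$.) If $C_f\neq\partial f$, then a refinement was used, along two consecutive exterior edges $xy,yz$ of some $K_4$ block $B'$; its central vertex $w'$ is adjacent to $x,y,z$ and, being interior to $B'$, does not lie on $\partial f$, and one verifies that $w'$ together with a suitable sub-path of $\partial f$ spans a $C_5$ (for instance $w'xvzy$ when $\partial f=xyzv$), a contradiction.

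The step I expect to be the main obstacle is the degenerate sub-case $s\in\{v_1,v_2\}$ of $l'(f)=4$, together with the bookkeeping about which edges of $C_f$ genuinely lie on $\partial f$ rather than arising from a refinement; here one must carefully exploit $\delta(G)\ge 3$ and the precise shapes of the three nontrivial blocks $K_3$, $K_4$, $\Theta_4$. Everything else amounts to a short explicit search for a forbidden $C_5$.
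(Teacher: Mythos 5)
Your argument is correct and is essentially the paper's own proof: pseudoface lengths $3$, $4$ and $5$ are excluded by exhibiting a $C_5$ built from the pseudoface together with an adjacent interior $3$-face of the block (or the central vertex of the refined $K_4$), and your extra case analysis for the apex landing on the pseudoface (the rotation arguments for $K_3$ and $\Theta_4$, the interior centre for $K_4$) only makes explicit what the paper compresses into ``together with an interior face sharing only one edge with $C_f$''. The one step you leave as ``a short check'' --- a triangular outer face meeting a nontrivial block --- is passed over silently in the paper's proof as well, so it does not separate the two arguments.
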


\begin{proof}
Denote this pseudoface by $C_f$. $C_f$ cannot have length $3$. If $C_f$ is an exterior face in the ordinary sense, then by definition of triangular blocks, $C_f$ cannot be of length $3$.  Otherwise, $C_f$ together with the $K_4$ would create a $C_5$ as shown in Figure \ref{fig:lenth3pseudoface}. $C_f$ cannot have length $4$, since together with an interior face sharing only one edge with $C_f$, it would create a $C_5$. $C_f$ cannot have length $5$ since $G$ is $C_5$-free. Therefore, we know that each exterior pseudoface has length at least $6$.
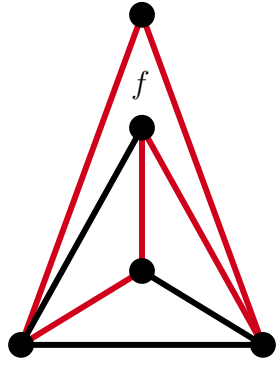
\begin{figure}[H]
    \centering
\tikzset{every picture/.style={line width=0.75pt}} 
\begin{tikzpicture}[x=0.75pt,y=0.75pt,yscale=-1,xscale=1]

\draw [color={rgb, 255:red, 208; green, 2; blue, 27 }  ,draw opacity=1 ][line width=2.25]    (320,70) -- (320,142.3) ;
\draw [color={rgb, 255:red, 208; green, 2; blue, 27 }  ,draw opacity=1 ][line width=2.25]    (320,142.3) -- (259,179.61) ;
\draw [line width=2.25]    (381,179.61) -- (320,142.3) ;
\draw  [fill={rgb, 255:red, 0; green, 0; blue, 0 }  ,fill opacity=1 ] (314,142.3) .. controls (314,138.98) and (316.69,136.3) .. (320,136.3) .. controls (323.31,136.3) and (326,138.98) .. (326,142.3) .. controls (326,145.61) and (323.31,148.3) .. (320,148.3) .. controls (316.69,148.3) and (314,145.61) .. (314,142.3) -- cycle ;
\draw [color={rgb, 255:red, 208; green, 2; blue, 27 }  ,draw opacity=1 ][line width=2.25]    (320,13) -- (259,179.61) ;
\draw [color={rgb, 255:red, 208; green, 2; blue, 27 }  ,draw opacity=1 ][fill={rgb, 255:red, 0; green, 0; blue, 0 }  ,fill opacity=1 ][line width=2.25]    (320,13) -- (381,179.61) ;
\draw [line width=2.25]    (320,70) -- (259,179.61) ;
\draw [color={rgb, 255:red, 208; green, 2; blue, 27 }  ,draw opacity=1 ][line width=2.25]    (320,70) -- (381,179.61) ;
\draw [line width=2.25]    (259,179.61) -- (381,179.61) ;
\draw  [fill={rgb, 255:red, 0; green, 0; blue, 0 }  ,fill opacity=1 ] (375,179.61) .. controls (375,176.3) and (377.69,173.61) .. (381,173.61) .. controls (384.31,173.61) and (387,176.3) .. (387,179.61) .. controls (387,182.93) and (384.31,185.61) .. (381,185.61) .. controls (377.69,185.61) and (375,182.93) .. (375,179.61) -- cycle ;
\draw  [fill={rgb, 255:red, 0; green, 0; blue, 0 }  ,fill opacity=1 ] (253,179.61) .. controls (253,176.3) and (255.69,173.61) .. (259,173.61) .. controls (262.31,173.61) and (265,176.3) .. (265,179.61) .. controls (265,182.93) and (262.31,185.61) .. (259,185.61) .. controls (255.69,185.61) and (253,182.93) .. (253,179.61) -- cycle ;
\draw  [fill={rgb, 255:red, 0; green, 0; blue, 0 }  ,fill opacity=1 ] (314,70) .. controls (314,66.69) and (316.69,64) .. (320,64) .. controls (323.31,64) and (326,66.69) .. (326,70) .. controls (326,73.31) and (323.31,76) .. (320,76) .. controls (316.69,76) and (314,73.31) .. (314,70) -- cycle ;
\draw  [fill={rgb, 255:red, 0; green, 0; blue, 0 }  ,fill opacity=1 ] (314,13) .. controls (314,9.69) and (316.69,7) .. (320,7) .. controls (323.31,7) and (326,9.69) .. (326,13) .. controls (326,16.31) and (323.31,19) .. (320,19) .. controls (316.69,19) and (314,16.31) .. (314,13) -- cycle ;
\draw (313,40) node [anchor=north west][inner sep=0.75pt]   [align=left] {\Large $f$};
\end{tikzpicture}
    \caption{Exterior pseudoface of $K_4$ of length $3$ would create a $C_5$}
    \label{fig:lenth3pseudoface}
\end{figure}
\end{proof}

Now we show that the following lemma finishes the proof of Theorem \ref{thm:C5Essential}.

\begin{lemma}
\label{lem:C5}
Let $G$ be a $2$-connected $C_5$-free plane graph on $n$ vertices with $\delta(G)\ge 3$. Any triangular block in $G$ satisfies $$
9v(B)-23e(B)+33f(B)\le 0.
$$
\end{lemma}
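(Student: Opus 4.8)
The plan is to verify the inequality $9v(B) - 23e(B) + 33f(B) \le 0$ separately for each of the four possible triangular block types established above: the trivial block $K_2$, the block $K_3$, and the two $4$-vertex blocks $K_4$ and $\Theta_4$. For each type, $e(B)$ is a fixed constant ($1$, $3$, $6$, and $5$ respectively), and $v(B) = \sum_{v\in V(B)} n_B(v)$ is at most $|V(B)|$ but can be smaller when vertices are junction vertices; similarly $f(B)$ depends on the lengths $l'(f)$ of the exterior pseudofaces adjacent to $B$ as well as on the number of interior faces. So the strategy is to bound $v(B)$ from above by $|V(B)|$ (the worst case, since the coefficient $9$ is positive) and to bound $f(B)$ from above using Proposition~\ref{pro:C5}, which guarantees every exterior pseudoface has length at least $6$ when $B$ is nontrivial.

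First I would handle the nontrivial cases. For $B = K_3$: $v(B) \le 3$, $e(B) = 3$, there is $1$ interior face, and there are $3$ exterior edges each contributing at most $1/6$ to $f(B)$ by Proposition~\ref{pro:C5}, so $f(B) \le 1 + 3\cdot\frac{1}{6} = \frac{3}{2}$; then $9\cdot 3 - 23\cdot 3 + 33\cdot\frac{3}{2} = 27 - 69 + 49.5 < 0$. For $B = \Theta_4$: $v(B)\le 4$, $e(B) = 5$, there are $2$ interior faces, and $4$ exterior edges, giving $f(B) \le 2 + 4\cdot\frac16 = \frac{8}{3}$; then $36 - 115 + 88 = 9$... here I need to be more careful — this suggests the crude bound $f(B)\le 2 + 4/6$ is too weak and I must use that some exterior pseudofaces are longer, or exploit that not all four exterior edges of $\Theta_4$ can simultaneously lie on length-$6$ pseudofaces, or that $v(B)$ is strictly less than $4$ in the tight configurations. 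For $B = K_4$: $v(B)\le 4$, $e(B) = 6$, there are $3$ interior faces; here the pseudoface refinement in Definition~\ref{def:triContributionFace} matters, since two consecutive exterior edges on one exterior face jointly contribute $1/l'(f)$, so effectively $f(B) \le 3 + 3\cdot\frac16 = \frac72$ (treating the three exterior edges as contributing as if to three length-$\ge 6$ pseudofaces, possibly paired); then $36 - 138 + 115.5 = 13.5 > 0$, so again the naive bound fails and a sharper analysis is needed. Finally, for the trivial block $K_2$: $e(B) = 1$, $v(B)\le 2$, no interior face, and the single edge lies on two exterior faces $f_1, f_2$ contributing $1/l'(f_1) + 1/l'(f_2)$; but for a trivial block the pseudofaces need not have length $\ge 6$, so here I expect to need the global structure (e.g. that faces of length $3$ are impossible since the edge is in no $3$-face, so $l'(f_i)\ge 4$) giving $f(B)\le 2\cdot\frac14 = \frac12$ and $18 - 23 + 16.5 = 11.5 > 0$ — so the per-block inequality as literally stated cannot hold for $K_2$ under only these crude bounds.

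The resolution, and what I expect to be the main obstacle, is that the crude face-length bounds are insufficient and the real argument must account for how exterior pseudofaces are \emph{shared} between blocks and must use the precise combinatorics of which lengths are actually achievable. In particular I would look for an averaging or amortization: an exterior pseudoface of length $\ell$ contributes a total of $1$ across all its incident blocks, so long pseudofaces ($\ell \ge 6$, or $\ell \ge 4$ for trivial-block-only boundaries) help; the point must be that a block forcing short exterior structure is compensated by adjacent long faces. Concretely, I would try to show that for $\Theta_4$ and $K_4$ at least one incident exterior pseudoface must have length substantially larger than $6$ (because packing two interior-face-sharing short pseudofaces around a $K_4$ recreates a $C_5$, as in Figure~\ref{fig:lenth3pseudoface}), and for $K_2$ that the relevant faces are long because $G$ is $C_5$-free and $2$-connected with $\delta \ge 3$ — possibly length $\ge 6$ as well after a short case check. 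Summing the resulting per-block inequalities over all $B\in\mathcal{T}$ and invoking $v_G = \sum v(B)$, $e_G = \sum e(B)$, $f_G = \sum f(B)$ together with Euler's formula $v_G - e_G + f_G = 2$ then yields $9v_G - 23e_G + 33f_G \le 0$, i.e. $9n - 23e_G + 33(2 - n + e_G) \le 0$, which rearranges to $10 e_G \le 24 n - 66$, exactly $e_G \le (12n-33)/5$. The delicate part is pinning down the correct sharp face-length/junction-vertex bounds so that every block type's contribution is genuinely nonpositive; the extremal construction of Figure~\ref{fig:C5Extremal}, where every block has contribution exactly $0$, should guide which bounds are tight and therefore which cases require the most care.
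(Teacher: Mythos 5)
Your framework (per-block case analysis over $K_2,K_3,\Theta_4,K_4$, then summing and applying Euler's formula) matches the paper, but the step you defer --- ``pinning down the correct sharp face-length/junction-vertex bounds'' --- is precisely where the content of the lemma lies, and the direction you propose for closing it is the wrong one. The paper does \emph{not} strengthen the face-length bounds beyond Proposition~\ref{pro:C5}: in every nontrivial case it uses exactly $l'\ge 6$ (so $f(B)\le 1+3/6$ for $K_3$, $\le 2+4/6$ for $\Theta_4$, $\le 3+3/6$ or $3+2/6$ for $K_4$ after the pairing rule of Definition~\ref{def:triContributionFace}). Your hoped-for claim that ``for $\Theta_4$ and $K_4$ at least one incident exterior pseudoface must have length substantially larger than $6$'' is in fact false: in the extremal construction every such block has contribution exactly $0$ with pseudofaces of length exactly $6$. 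What actually closes the gap is the bound on $v(B)$ that you explicitly discarded by taking $v(B)\le|V(B)|$: since $\delta(G)\ge 3$, every vertex whose degree \emph{inside} the block is $2$ must be a junction vertex and so contributes at most $1/2$. This gives $v(K_3)\le 3/2$, $v(\Theta_4)\le 2+2/2=3$, and for $K_4$ the $2$-connectedness (plus $n\ge 11$) forces at least two of the three exterior vertices to be junctions, giving $v(B)\le 3$ or $\le 5/2$; with the face bounds above these yield $-6$, $0$, $-1$ and $0$ respectively. Note also an arithmetic slip: your $K_3$ computation $27-69+49.5$ equals $+7.5$, not a negative number, so even the one case you believed survives the crude bound does not --- all four cases need the junction-vertex refinement.

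For the trivial block $K_2$ your proposed fix also misses the paper's mechanism. You cannot hope to show the two incident pseudofaces have length $\ge 6$: length-$4$ faces bounded by trivial edges are perfectly consistent with $C_5$-freeness and $\delta\ge 3$ (think of a quadrilateral face none of whose edges lies in a triangle), and with $v(B)\le 1$, $f(B)\le 1/2$ the expression is $+2.5$. The paper instead splits into two subcases: if both pseudofaces have length $>4$ they have length $\ge 6$ ($5$ is excluded), giving $9\cdot 1-23+33/3=-3$; if some pseudoface is a $4$-face, then by Proposition~\ref{pro:C5} every block meeting that face is trivial, so each endpoint of the edge (having degree $\ge 3$, all incident edges in distinct $K_2$ blocks) lies in at least three blocks, whence $v(B)\le 2/3$ and $9\cdot\tfrac23-23+33\cdot\tfrac12=-\tfrac12$. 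Finally, your fallback idea of amortizing contributions across blocks sharing a pseudoface would at best prove the summed inequality, not the per-block statement the lemma asserts, and you have not supplied such an argument; the identity $\sum_B f(B)=f_G$ already encodes the sharing, so no cross-block transfer is needed once $v(B)$ is bounded correctly.
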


Once we have proved this, we have $$
9v_G-23e_G+33f_G=\sum_{B\in\mathcal{T}}(9v(B)-23e(B)+33f(B))\le 0.
$$ Combining this with Euler's formula $v_G-e_G+f_G=2$ finishes the proof.

\begin{proof}
We do casework to proceed with the proof. We distinguish the cases according to $B$.

\textbf{Case 1:} $B$ is $K_2$.

Consider the two exterior pseudofaces of $B$. We distinguish two cases.
\begin{enumerate}[(i)]
    \item If there exists one exterior pseudoface $C_f$ of $B$ that is a $4$-face, by Proposition \ref{pro:C5}, we know that any of the triangular blocks having $C_f$ as an exterior face must be a $K_2$. Note that we have assumed $\delta(G)\ge3$, we know that each junction vertex of $B$ is shared by at least three triangular blocks. This gives $v(B)\le 1/3+1/3=2/3$ and note that the other pseudoface has length at least $4$, we have $f(B) \leq 1/4+1/4=1/2$. With $e(B)=1$, we have $9v(B)-23e(B)+33f(B)\le -1/2$.
    \item If both exterior pseudofaces have length greater than $4$, then since $G$ is $C_5$-free, they must have length at least $6$, which gives $f(B)\le 1/6+1/6=1/3$. Since $\delta(G)\ge3$, both vertices of $B$ must be junction vertices, so we have $v(B)\le 1/2+1/2=1$. With $e(B)=1$, we have $9v(B)-23e(B)+33f(B)\le -3$.
\end{enumerate}

\textbf{Case 2:} $B$ is $K_3$.

Since $\delta(G)\ge3$, each vertex of $B$ must be a junction vertex. This gives $v(B)\le 3/2$. Note that $G$ is $C_5$-free and by Proposition \ref{pro:C5}, each exterior pseudoface of $B$ have length at least $6$, so $f(B)\le 1+3/6=3/2$. With $e(B)=3$, we have $9v(B)-23e(B)+33f(B)\le -6$.

\textbf{Case 3:} $B$ is $\Theta_4$.

Note that two of the exterior vertices in $B$ have degree $2$, so they must be junction vertices, which implies $v(B)\le 2+2/2$. By Proposition \ref{pro:C5} we know that the exterior pseudofaces has length at least $6$, which gives $f(B)\le 2+4/6$. With $e(B)=5$, we have $9v(B)-23e(B)+33f(B)\le 0$.

\textbf{Case 4:} $B$ is $K_4$.

Since $G$ is $2$-connected, $B$ has at least two junction vertices, otherwise either $B$ is the whole graph which contradicts the fact that $n\ge 11$, or the unique junction vertex of $B$ would be a cut vertex, a contradiction. Now we consider two different scenarios.

\begin{enumerate}[(i)]
    \item There are $2$ junction vertices in $B$. In this case, we have $v(B)\le 2+2/2$. Note that in this case, two consecutive exterior edges are contained in the boundary of a same exterior face, by Definition \ref{def:triContributionFace} and Proposition \ref{pro:C5} we know that $f(B)\le 3+2/6$. With $e(B)=6$, we have $9v(B)-23e(B)+33f(B)\le -1$.
    \item All $3$ are junction vertices. In this case, we have $v(B)\le 1+3/2$. By Proposition \ref{pro:C5} we know $f(B)\le 3+3/6$. With $e(B)=6$, we have $9v(B)-23e(B)+33f(B)\le 0$.
\end{enumerate}
\end{proof}

\section{Quadrangular blocks}
\label{sec:quadrablocks}
\begin{definition}
\label{def:quadrablock}
Let $G$ be a plane graph. For an edge $e\in E(G)$, if it's not contained in any of the $4$-faces of $G$, then we call it a {\bf trivial quadrangular block}. Otherwise, we perform the following algorithm to construct a {\bf quadrangular block} $B$.
\begin{algorithm}[h] 
$B\gets(V(e),e)$\;
    \While{there exists an edge in $B$ such that it is in a bounded $4$-face of $G$ which is not contained in $B$}
    {add all the edges of such bounded $4$-faces to $B$;}
    Output $B$;
\end{algorithm} 

Notice that the resulting quadrangular block does not depend on the choice of starting edge as long as the starting edge is in the quadrangular block.
\end{definition}

\begin{definition}
    \label{def:quadrasettings}
    Let $G$ be a plane graph and $B$ be a quadrangular block.
    \begin{enumerate}
        \item A vertex in $G$ is called a {\bf quadrangular block junction vertex} if it's contained in at least two different quadrangular blocks in $G$.
        \item A vertex or an edge of $B$ is called {\bf quadrangular block exterior} if it lies on the boundary of $B$ and it's called {\bf quadrangular block interior} otherwise. A face is called {\bf quadrangular block exterior} if it is not contained by $B$ while it has at least one common edge with $B$ and it's called {\bf quadrangular block interior} if it's contained by $B$.
    \end{enumerate}
\end{definition}

In the following part of the paper, for the sake of convenience in expression, {\bf quadrangular junction vertex} is abbreviated as {\bf junction vertex}, {\bf quadrangular block exterior} is abbreviated as {\bf exterior} and {\bf quadrangular block interior} is abbreviated as {\bf interior}.

\begin{definition}
\label{def:quadraContribution}
Let $G$ be a plane graph and $B$ is a quadrangular block in $G$. We denote the contribution to the vertex number of a vertex $v$ in $B$ by $n_B(v)$, and define it as$$
n_B(v) = \frac{1}{\#\textit{ of quadrangular blocks in } G \textit{ containing } v}.
$$
The contribution of $B$ to the vertex number is defined as $$v(B)=\sum_{v\in V(B)}n_B(v).$$
\end{definition}

By definition we have $$
v_G=\sum_{v\in V(G)} 1=\sum_{v\in V(G)} \sum_{B\ni v}n_B(v)=\sum_{B\in\mathcal{Q}}\sum_{v\in V(B)}n_B(v)=\sum_{B\in\mathcal{Q}}v(B),
$$ where $\mathcal{Q}$ is the family of all quadrangular blocks in $G$.

We define the contribution of $B$ to the edge number as the number of edges in $B$ and denote it by $e(B)$. Since the quadrangular blocks are edge-disjoint by definition and each edge in $G$ is included in a quadrangular block, we have $$e_G=\sum_{B\in\mathcal{Q}}e(B).$$

Each face in $G$ is either an interior face of a unique quadrangular block or an exterior face of some quadrangular blocks. We denote the interior faces in $G$ by $F_I(G)$ and we denote the exterior faces in $G$ by $F_E(G)$.

\begin{definition}
\label{def:quadraContributionFace}
Let $G$ be a plane graph and $B$ is a quadrangular block in $G$. For each exterior edge $e$ of $B$, we denote its contribution to the face number in $B$ by $f_B(e)$. Note that $e$ can be the edge of at most two exterior faces, we define it as follows.
\begin{enumerate}
    \item If $e$ is contained in the boundary of two exterior faces $f_1$ and $f_2$ (i.e., $e$ is $K_2$ the trivial quadrangular block), then let $f_B(e)=1/l(f_1)+1/l(f_2)$.
    \item Otherwise, let $f_B(e)=1/l(f)$ where $f$ is the exterior face containing $e$.
\end{enumerate}
The contribution of $B$ to the face number is defined as $$f(B)=\#\textit{ interior faces in $B$ } + \sum_{e \textit{ is an exterior edge of $B$ }}f_B(e).$$
\end{definition}

 By definition we have \begin{align*}
    f_G& =\sum_{f\in F(G)}1=\sum_{f\in F_I(G)}1+\sum_{f\in F_E(G)}1
=\sum_{B\in\mathcal{Q}}\#\textit{ interior faces in $B$}
+\sum_{f\in F_E(G)}\frac{l(f)}{l(f)}\\
 & =\sum_{B\in\mathcal{Q}}\#\textit{ interior faces in $B$}
+\sum_{B\in\mathcal{Q}}\sum_{\textit{ $e$ is an exterior edge of $B$}}f_B(e)
=\sum_{B\in\mathcal{Q}}f(B)
 \end{align*}

\section{Quadrangular blocks in bipartite graphs}
\label{sec:bi}

All graphs considered in this section are bipartite planar graphs. Since a bipartite graph does not contain odd cycles, each face is a face with even number of edges.

\subsection{\texorpdfstring{$C_6$}{C6}-free graphs}~

In this section, we prove 
\thmBiCSix*

\subsubsection{Extremal construction showing sharpness}~

The extremal construction below shows that the bound in Theorem \ref{thm:biC6} is sharp.
\begin{figure}[H]
    \centering
    \includegraphics{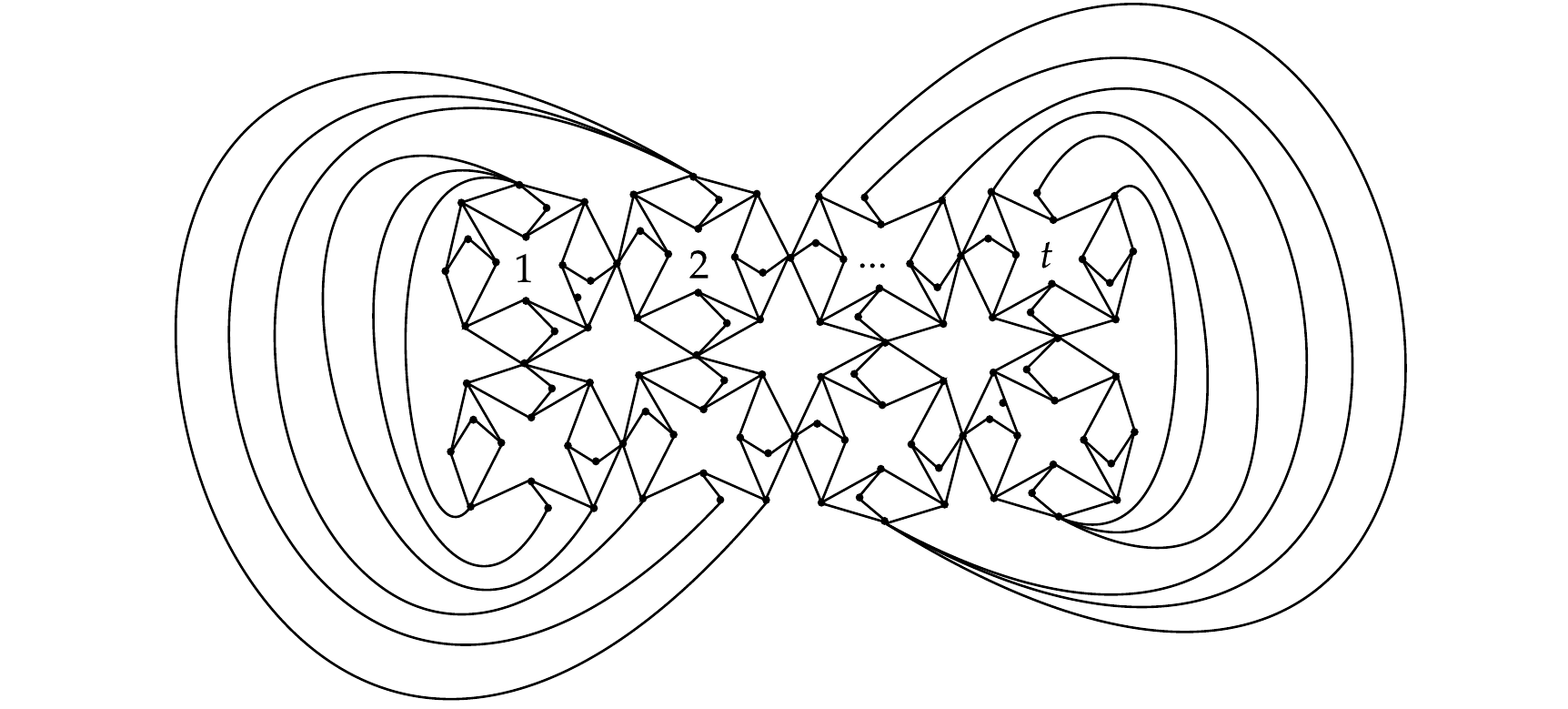}
    \caption{The extremal construction of Theorem \ref{thm:biC6}}
    \label{fig:biC6Extremal}
\end{figure}

In Figure \ref{fig:biC6Extremal}, there are $28t+2$ vertices, $48t$ edges, $8t$ degree $2$ vertices, and $8t+4$ edges joining a degree $2$ vertex and a degree $3$ vertex, which satisfy that $e_G=3v_G/2+k/2+e_{2,3}/4-4$.

\begin{remark}
This construction comes from the proof of Lemma \ref{lem:biC6}. In the extremal graph, each quadrangular block has $0$ ``contribution".
\end{remark}

\subsubsection{Preparatory propositions}

\begin{proposition}
\label{pro:biC6MinDegree}
    Let $G$ be a $C_6$-free planar bipartite graph on $n$ vertices. Then $\delta(G)\le 2$.
\end{proposition}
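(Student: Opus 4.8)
The plan is to argue by contradiction: assuming $\delta(G)\ge 3$, I would exhibit a copy of $C_6$ in $G$. The first step is a reduction to the $2$-connected case. If $G$ is disconnected, pass to a component; if $G$ is connected but not $2$-connected, let $B$ be an end-block, i.e.\ a block of $G$ containing at most one cut-vertex $c$ of $G$. Since $\delta(G)\ge 3$, no block of $G$ is a single edge, so $B$ is a $2$-connected bipartite planar $C_6$-free graph, and every vertex of $B$ other than $c$ keeps its $G$-degree and so has degree $\ge 3$ (while $c$ has degree at least $2$ in $B$). Thus it suffices to derive a contradiction for a $2$-connected bipartite \emph{plane} $C_6$-free graph $H$ with $n$ vertices, $e$ edges and $f$ faces in which all degrees are $\ge 3$ with at most one exception of degree $2$.

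Next I would extract the structural facts. In $H$ every face has even length ($H$ is bipartite) and no face has length $6$ (its boundary cycle would be a $C_6$), so every face has length $4$ or at least $8$. The crux is the claim that \emph{two distinct $4$-faces of $H$ cannot share an edge}. Indeed, suppose $4$-faces bounded by $u\!-\!v\!-\!x\!-\!y\!-\!u$ and $u\!-\!v\!-\!x'\!-\!y'\!-\!u$ both contain the edge $uv$; taking the bipartition with $u\in A$, $v\in B$ gives $x,x'\in A$ and $y,y'\in B$, the inequalities $x\ne x'$ and $y\ne y'$ follow from $d(u),d(v)\ge 3$ (otherwise $u$ or $v$ would have degree $2$), and all other coincidences are forbidden by bipartiteness; hence $u,v,x,y,x',y'$ are distinct and $u\!-\!y\!-\!x\!-\!v\!-\!x'\!-\!y'\!-\!u$ is a $C_6$ — a contradiction. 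Consequently, around a vertex $v$ of degree $d\ge 3$ the $d$ incident faces form a cyclic sequence in which consecutive faces share the edge of $H$ at $v$ lying between them; thus the $4$-faces among them form an independent set in $C_d$, so at most $\lfloor d/2\rfloor$ of them are $4$-faces. Double-counting vertex--$4$-face incidences then gives $4f_4\le\sum_v\lfloor d(v)/2\rfloor=e-\tfrac12 n_o$, where $f_4$ is the number of $4$-faces and $n_o$ the number of odd-degree vertices. (If $H$ is an end-block, the possible degree-$2$ vertex and its two neighbours may violate this by a bounded additive constant, which is harmless below.)

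To finish, I would combine this with Euler's formula $n-e+f=2$. Summing face lengths, $2e=4f_4+\sum_{l(f)\ge 8}l(f)\ge 4f_4+8(f-f_4)$, so $f\le\tfrac{e}{4}+\tfrac{f_4}{2}\le\tfrac{3e}{8}-\tfrac{n_o}{16}$; substituting $f=2-n+e$ yields $e\le\tfrac{8}{5}n-\tfrac{16}{5}-\tfrac{1}{10}n_o$. On the other hand each odd-degree vertex has degree $\ge 3$ and each even-degree vertex degree $\ge 4$, so $2e\ge 4n-n_o$, i.e.\ $e\ge 2n-\tfrac12 n_o$. These two bounds force $n\le n_o-8$, contradicting $n_o\le n$; the additive constant in the end-block case only weakens $-8$ to a smaller negative constant, so the contradiction persists.

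The points I expect to be most delicate are: verifying in the key claim that the six vertices of the $C_6$ are genuinely distinct — which is exactly where both bipartiteness and $\delta\ge 3$ enter — and using the parity-sensitive estimate $4f_4\le e-\tfrac12 n_o$ rather than the weaker $f_4\le e/4$, since only the former makes the final inequality contradict $e\ge\tfrac32 n$ for all $n$ (the crude bound leaves a gap for small $n$). Carefully tracking the lone degree-$2$ vertex that can occur in an end-block is routine but needs attention to keep the constants on the right side.
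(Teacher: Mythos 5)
Your proof is correct, and its bookkeeping is genuinely different from the paper's. Both arguments pivot on the same key fact---with the relevant degrees at least $3$, two $4$-faces sharing an edge would produce a $C_6$ (you prove this in detail; the paper essentially just asserts it)---and both finish with Euler's formula, but the exploitation differs. The paper discharges: each face sends $1/l(f)$ to its edges, so an edge receives at most $1/4+1/8=3/8$, and the decisive refinement is that around a degree-$3$ vertex the incident faces cannot alternate $4$-face/$8$-face, so the three edges at such a vertex carry total charge at most $3/8+3/8+1/4=1$; summing over vertices against $2f_G$ gives the contradiction. You instead bound the number of $4$-faces incident to each vertex by $\lfloor d(v)/2\rfloor$ (an independent-set bound in the cyclic sequence of faces around $v$), yielding the parity-sensitive count $4f_4\le e-\tfrac12 n_o$, and play it against $2e\ge 4n-n_o$; as you correctly observe, the $n_o$-term is exactly what closes the inequality for all $n$ rather than only large $n$. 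Your route also makes an explicit reduction to $2$-connected end-blocks, which the paper omits; this is what legitimizes ``every face boundary is a cycle'' and ``the $d(v)$ faces around $v$ form a cycle'', at the cost of tracking one possible degree-$2$ vertex, and your claim that the resulting additive constants are harmless does check out (one gets $n\le n_o-4$ in place of $n\le n_o-8$, still contradicting $n_o\le n$). Two small inaccuracies to fix, neither affecting validity: with $\delta\ge3$ a block of $G$ \emph{can} be a single edge (a bridge joining two copies of $K_4$, say)---what you need, and what is true, is that an \emph{end}-block cannot be; and in the key claim the coincidences $u=x$, $u=x'$, $v=y$, $v=y'$ are excluded not by bipartiteness but because each $4$-face boundary is a cycle on four distinct vertices.
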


\begin{proof}
Denote the number of degree $i$ vertices by $n_i$. Suppose the contrary that $\delta(G)\ge3$, we will have $$n=\sum_{i\ge3} n_i$$

We now assign charge $1$ to each face and perform the following discharging method: we discharge $1/l(f)$ to the edges of each face $f$.

Note that $G$ is $C_6$-free and $\delta(G)\ge 3$, we know that $4$-faces cannot be adjacent to each other. So, each edge can receive charge at most $1/4+1/8=3/8$. Consider any degree $3$ vertex $u$, we know that it's not possible for all the three edges incident to $u$ to receive charge $3/8$ as it's not possible for the three edges simultaneously having one side as a $4$-face and an $8$-face on the other side. Therefore, an upper bound of the total charge for three edges incident to a common degree $3$ vertex is $3/8+3/8+1/4$, which means two of the faces between the two edges are $8$-faces and one is a $4$-face.

We double count the total charge. Since each face receives charge $1$ originally, we know that the total charge is $f_G$. On the other hand, after the discharging, for any vertex $v$ having degree $d(v)>3$, the edges incident to it will have total charge at most $d(v)\cdot3/8$ while for any degree $3$ vertex the edges incident to it will have total charge at most $3/8+3/8+1/4=1$. Because each edge is incident to exactly two vertices, take the sum of the upper bound for all vertices we have the total charge is at most $(n_3+3/8\cdot\sum_{i\ge 4} in_i)/2$.

Thus, by Euler's formula we have$$
   4-2\sum_{i\ge3}n_i+\sum_{i\ge3}in_i= 4-2v_G+2e_G=2f_G  \le  n_3+\frac{3}{8}\sum_{i\ge4}in_i,
$$ which implies $$
4\le \frac{3}{8}\sum_{i\ge4}in_i+2\sum_{i\ge4}n_i-\sum_{i\ge4}in_i=\sum_{i\ge4}(2-\frac{5}{8}i)n_i.
$$
While note that $2-i\cdot 5/8<0$ for $i\ge4$, a contradiction. Thus, we know that $\delta(G)\le 2$.
\end{proof}

To prove the theorem, we only need to consider the case that each degree $2$ vertex has at least one neighbor of degree at most $3$. Otherwise, we can perform induction on the number of such vertices by simply deleting one such vertex from the graph. Also, we can suppose that $\delta(G)=2$, because if $\delta(G)=2$, we can recursively eliminate degree $1$ vertices and do induction. We can suppose the graph is connected because otherwise we can consider the inequality on the components separately and sum them up to get an even better bound.

\begin{proposition}
    Let $G$ be a $C_6$-free bipartite plane graph and each degree $2$ vertex of it has at least one neighbor of degree at most $3$. A quadrangular block in $G$ contains at most $5$ vertices.
\end{proposition}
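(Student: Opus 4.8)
The plan is to show that no quadrangular block of $G$ can span $6$ or more vertices; the engine is a description of how two $4$-faces can meet.

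\textbf{Step 1: gluing two $4$-faces.} First I would prove that if $F\ne F'$ are $4$-faces of $G$ sharing at least one edge, then they in fact share a two-edge path, $V(F)\cup V(F')$ has exactly $5$ vertices, and $F\cup F'$ is a $K_{2,3}$. Write $F$ as the cycle $a-b-c-d-a$ and $F'$ as $a-b-c'-d'-a$ (these being the only two faces of $G$ through the common edge $ab$). If $\{c,d\}\cap\{c',d'\}=\emptyset$, then $d-a-d'-c'-b-c-d$ is a $C_6$ in $G$, a contradiction; so the two pairs intersect, and since $G$ is bipartite with $c,c'$ on one side and $d,d'$ on the other, the intersection is a single vertex, say $c=c'$. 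Then $F$ and $F'$ share exactly the path $a-b-c$, their union is $K_{2,3}$ with parts $\{a,c\}$ and $\{b,d,d'\}$, and the only two faces of $G$ incident with $b$ are $F$ and $F'$, so $d_G(b)=2$; this last point is what will make the hypothesis bite.

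\textbf{Step 2: locating the third face.} A quadrangular block is an edge-connected union of bounded $4$-faces; if it has at most two faces it spans at most $5$ vertices, so I may assume it contains edge-adjacent faces $F_1,F_2$ and a face $F_3$ edge-adjacent to $F_1\cup F_2$. By Step 1, $F_1\cup F_2$ is a $K_{2,3}$; write its parts as $\{x,z\}$ and $\{y,u,w\}$ with $F_1$ the cycle $x-y-z-u-x$ and $F_2$ the cycle $x-y-z-w-x$, so $F_1$ and $F_2$ share the path $x-y-z$ and $y$ plays the role of the trapped vertex, $d_G(y)=2$. The edge $F_3$ shares with this $K_{2,3}$ is not $xy$ or $yz$ (their only faces are $F_1,F_2$), so, relabelling if necessary, I may take it to be $xu$. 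Then $F_3$ is the face other than $F_1$ incident with $xu$, so by Step 1 it shares a two-edge path through $xu$ with $F_1$; that path is $u-x-y$ (impossible, as it would put the edge $xy$ into $F_3$) or $z-u-x$, so $F_3$ is the cycle $z-u-x-v-z$ for some vertex $v$. If $v\in V(F_1\cup F_2)$ one checks that $v=w$ (the options $v=u$ and $v=y$ are degenerate or give $F_3=F_1$), so $F_3$ is the remaining $4$-cycle $x-u-z-w-x$ of the $K_{2,3}$; then $F_1\cup F_2\cup F_3$ is still just that $K_{2,3}$, every one of its six edges now has both incident faces in the block, the block cannot grow, and it has only $5$ vertices.

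\textbf{Step 3: the degree condition, and the main obstacle.} The only remaining case is that $v$ is a genuinely new vertex; but the two new edges of $F_3=z-u-x-v-z$ are $xv$ and $vz$, so $v$ is adjacent to both $x$ and $z$, each of which was already adjacent to the three distinct vertices $y,u,w$; hence $d_G(x)\ge 4$ and $d_G(z)\ge 4$. Since $y$ has degree $2$ with neighbours precisely $x$ and $z$, this contradicts the standing hypothesis that every degree-$2$ vertex has a neighbour of degree at most $3$. So $F_3$ cannot introduce a sixth vertex, and the block has at most $5$ vertices. I expect Step 1 to involve the most computation, but it is a routine ``two quadrilaterals on a common edge beget a $C_6$'' argument; the genuinely delicate point is Step 2's observation that the block structure forces the trapped vertex to have degree exactly $2$ in all of $G$, which is exactly where the hypothesis is used — and the hypothesis is essential, since $G=K_{2,4}$ is itself a single quadrangular block on $6$ vertices.
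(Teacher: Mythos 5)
Your argument is correct and follows essentially the same route as the paper's proof: two edge-sharing $4$-faces either yield a $C_6$ or glue into a $K_{2,3}$, and any further $4$-face either closes up that $K_{2,3}$ or produces a degree-$2$ vertex both of whose neighbors have degree at least $4$, contradicting the hypothesis. You simply carry out in detail (notably the observation that the trapped middle vertex has degree exactly $2$ in $G$) what the paper states in compressed form.
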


\begin{proof}
For a quadrangular block which contains more than $5$ vertices, it must be initiated from two adjacent $4$-faces by the generating algorithm of quadrangular blocks. If the two $4$-faces share only one edge then this gives a $6$-cycle. If the two $4$-faces share two edges then it is a $K_{2,3}$. Adding another $4$-face to it would create either a $C_6$, or a degree $2$ vertex which has both neighbors of degree more than $3$. We get a contradiction in both cases.
\end{proof}

Now we describe all the possible quadrangular blocks in $G$. As shown in Figure \ref{fig:quadraBlocksBiCSix}, other than the trivial quadrangular block $K_2$, there is one $4$-vertex quadrangular block $C_4$ and one $5$-vertex quadrangular block $K_{2,3}$.

\begin{figure}[H]
    \centering
    \begin{tikzpicture}[x=0.75pt,y=0.75pt,yscale=-1,xscale=1]
\draw  [line width=2.25]  (451.91,104.91) -- (510.5,163.5) -- (451.91,222.09) -- (393.32,163.5) -- cycle ;
\draw [line width=2.25]    (393.32,163.5) -- (510.5,163.5) ;
\draw  [fill={rgb, 255:red, 0; green, 0; blue, 0 }  ,fill opacity=1 ] (387.32,163.5) .. controls (387.32,160.19) and (390.01,157.5) .. (393.32,157.5) .. controls (396.64,157.5) and (399.32,160.19) .. (399.32,163.5) .. controls (399.32,166.81) and (396.64,169.5) .. (393.32,169.5) .. controls (390.01,169.5) and (387.32,166.81) .. (387.32,163.5) -- cycle ;
\draw  [fill={rgb, 255:red, 0; green, 0; blue, 0 }  ,fill opacity=1 ] (504.5,163.5) .. controls (504.5,160.19) and (507.19,157.5) .. (510.5,157.5) .. controls (513.81,157.5) and (516.5,160.19) .. (516.5,163.5) .. controls (516.5,166.81) and (513.81,169.5) .. (510.5,169.5) .. controls (507.19,169.5) and (504.5,166.81) .. (504.5,163.5) -- cycle ;
\draw  [fill={rgb, 255:red, 0; green, 0; blue, 0 }  ,fill opacity=1 ] (445.91,222.09) .. controls (445.91,218.77) and (448.6,216.09) .. (451.91,216.09) .. controls (455.23,216.09) and (457.91,218.77) .. (457.91,222.09) .. controls (457.91,225.4) and (455.23,228.09) .. (451.91,228.09) .. controls (448.6,228.09) and (445.91,225.4) .. (445.91,222.09) -- cycle ;
\draw  [fill={rgb, 255:red, 0; green, 0; blue, 0 }  ,fill opacity=1 ] (445.91,104.91) .. controls (445.91,101.6) and (448.6,98.91) .. (451.91,98.91) .. controls (455.23,98.91) and (457.91,101.6) .. (457.91,104.91) .. controls (457.91,108.23) and (455.23,110.91) .. (451.91,110.91) .. controls (448.6,110.91) and (445.91,108.23) .. (445.91,104.91) -- cycle ;
\draw  [fill={rgb, 255:red, 0; green, 0; blue, 0 }  ,fill opacity=1 ] (445.91,163.5) .. controls (445.91,160.19) and (448.6,157.5) .. (451.91,157.5) .. controls (455.23,157.5) and (457.91,160.19) .. (457.91,163.5) .. controls (457.91,166.81) and (455.23,169.5) .. (451.91,169.5) .. controls (448.6,169.5) and (445.91,166.81) .. (445.91,163.5) -- cycle ;
\draw [line width=2.25]    (47.82,163.5) -- (165,163.5) ;
\draw  [fill={rgb, 255:red, 0; green, 0; blue, 0 }  ,fill opacity=1 ] (41.82,163.5) .. controls (41.82,160.19) and (44.51,157.5) .. (47.82,157.5) .. controls (51.14,157.5) and (53.82,160.19) .. (53.82,163.5) .. controls (53.82,166.81) and (51.14,169.5) .. (47.82,169.5) .. controls (44.51,169.5) and (41.82,166.81) .. (41.82,163.5) -- cycle ;
\draw  [fill={rgb, 255:red, 0; green, 0; blue, 0 }  ,fill opacity=1 ] (159,163.5) .. controls (159,160.19) and (161.69,157.5) .. (165,157.5) .. controls (168.31,157.5) and (171,160.19) .. (171,163.5) .. controls (171,166.81) and (168.31,169.5) .. (165,169.5) .. controls (161.69,169.5) and (159,166.81) .. (159,163.5) -- cycle ;
\draw  [line width=2.25]  (277.91,104.91) -- (336.5,163.5) -- (277.91,222.09) -- (219.32,163.5) -- cycle ;
\draw  [fill={rgb, 255:red, 0; green, 0; blue, 0 }  ,fill opacity=1 ] (213.32,163.5) .. controls (213.32,160.19) and (216.01,157.5) .. (219.32,157.5) .. controls (222.64,157.5) and (225.32,160.19) .. (225.32,163.5) .. controls (225.32,166.81) and (222.64,169.5) .. (219.32,169.5) .. controls (216.01,169.5) and (213.32,166.81) .. (213.32,163.5) -- cycle ;
\draw  [fill={rgb, 255:red, 0; green, 0; blue, 0 }  ,fill opacity=1 ] (330.5,163.5) .. controls (330.5,160.19) and (333.19,157.5) .. (336.5,157.5) .. controls (339.81,157.5) and (342.5,160.19) .. (342.5,163.5) .. controls (342.5,166.81) and (339.81,169.5) .. (336.5,169.5) .. controls (333.19,169.5) and (330.5,166.81) .. (330.5,163.5) -- cycle ;
\draw  [fill={rgb, 255:red, 0; green, 0; blue, 0 }  ,fill opacity=1 ] (271.91,222.09) .. controls (271.91,218.77) and (274.6,216.09) .. (277.91,216.09) .. controls (281.23,216.09) and (283.91,218.77) .. (283.91,222.09) .. controls (283.91,225.4) and (281.23,228.09) .. (277.91,228.09) .. controls (274.6,228.09) and (271.91,225.4) .. (271.91,222.09) -- cycle ;
\draw  [fill={rgb, 255:red, 0; green, 0; blue, 0 }  ,fill opacity=1 ] (271.91,104.91) .. controls (271.91,101.6) and (274.6,98.91) .. (277.91,98.91) .. controls (281.23,98.91) and (283.91,101.6) .. (283.91,104.91) .. controls (283.91,108.23) and (281.23,110.91) .. (277.91,110.91) .. controls (274.6,110.91) and (271.91,108.23) .. (271.91,104.91) -- cycle ;

\draw (436,238) node [anchor=north west][inner sep=0.75pt]  [font=\Large] [align=left] {$K_{2,3}$};
\draw (92,238) node [anchor=north west][inner sep=0.75pt]  [font=\Large] [align=left] {$K_2$};
\draw (267,238) node [anchor=north west][inner sep=0.75pt]  [font=\Large] [align=left] {$C_4$};
\end{tikzpicture}
    \caption{All the possible quadrangular blocks in $G$}
    \label{fig:quadraBlocksBiCSix}
\end{figure}
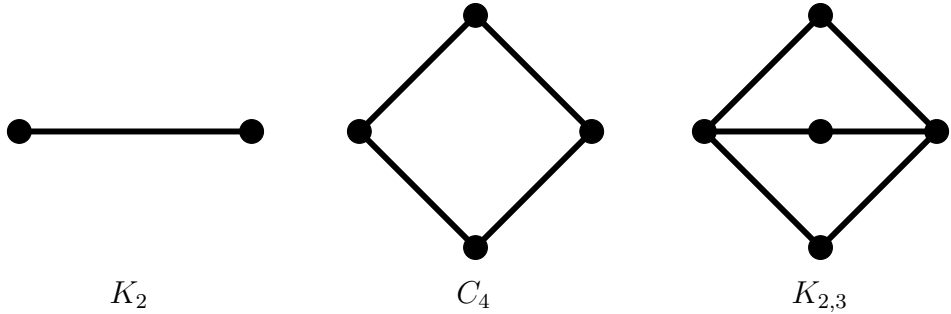

\begin{definition}
    \label{def:BiC6QuadraContribution}
    Let $G$ be a $C_6$-free bipartite plane graph and each degree $2$ vertex of it has at least one neighbor of degree at most $3$, $B$ is a quadrangular block in $G$. We denote the contribution of $v$ to the number of degree $2$ vertices by $k_B(v)$, and define it as $$
    k_B(v) = \begin{cases}
        1/ \#\textit{ quadrangular blocks in } G \textit{ containing } v & d(v)= 2\\
        0 & d(v)\neq 2
    \end{cases}.
    $$ The contribution of $B$ to the number of degree $2$ vertices is defines as $$
    k(B)=\sum_{v\in V(B)}k_B(v).
    $$
\end{definition}
By definition we have $$
k=\sum_{v\in V(G)}1=\sum_{v\in V(G)}\sum_{B\ni v}k_B(v)=\sum_{B\in \mathcal{Q}}\sum_{v\in V(B)}k_B(v)=\sum_{B\in\mathcal{Q}}k(B), 
$$ where $k$ is the number of degree $2$ vertices in $G$.

We define the contribution of $B$ to $e_{2,3}$ as the number of edges in $B$ such that one vertex of the edge has degree $2$ and the other has degree $3$ and we denote the contribution by $e_{2,3}(B)$. Since the quadrangular blocks form a partition of the edge set of $G$, we have $$
e_{2,3} = \sum_{B\in\mathcal{Q}}e_{2,3}(B),
$$ where $e_{2,3}$ is the number of edges in $G$ such that one vertex of the edge has degree $2$ and the other has degree $3$.

The following proposition is useful in the proof:
\begin{proposition}
    \label{pro:biC6}
    Let $G$ be a $C_6$-free bipartite plane graph with $\delta(G)=2$, then each exterior face of any quadrangular block $B$ in $G$ has length at least $8$. 
\end{proposition}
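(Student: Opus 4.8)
The plan is to combine parity (from bipartiteness) with the defining algorithm for quadrangular blocks and with $C_6$-freeness. Since $G$ is bipartite, the boundary of every face is a closed walk of even length, so every exterior face of $B$ has even length; it therefore suffices to rule out exterior faces of length $4$ and of length $6$.

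For the length-$4$ case, suppose $f$ is an exterior face of $B$ with $l(f)=4$; then $f$ is a $4$-face of $G$ sharing an edge with $B$ but not contained in $B$. If $f$ is a bounded face, the while-loop in Definition~\ref{def:quadrablock} would absorb the edges of $f$ into $B$, making $f$ interior — a contradiction. Otherwise $f$ is the outer face, and here we may assume the outer face of $G$ is not a $4$-face (if every face of $G$ is a $4$-face then $G$ is a single quadrangular block, which has no exterior faces and makes the statement vacuous). For the length-$6$ case, suppose $f$ is an exterior face with $l(f)=6$ and boundary closed walk $W=v_0v_1\cdots v_5v_0$. I would argue that $W$ is in fact a $6$-cycle of $G$, contradicting that $G$ is $C_6$-free. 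Indeed, if $G$ is $2$-connected (which in this setting we may assume) face boundaries are cycles and there is nothing more to say; in general one checks directly that $W$ has no repeated edge — a repeated edge in a face boundary of a simple plane graph, together with $\delta(G)\ge 2$ and the parity of face lengths, is quickly seen to force a vertex of degree $1$ — and then $W$ has no repeated vertex either, since two occurrences of one vertex would split $W$ into two closed walks of even length summing to $6$, hence one of length $2$, i.e.\ a repeated edge. Thus $W$ is a $C_6$, a contradiction. Combining the two cases gives $l(f)\ge 8$ for every exterior face $f$ of $B$.

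The technical heart is the length-$6$ case: one must be sure a face of length $6$ is genuinely bounded by a copy of $C_6$ rather than by a degenerate closed walk (for instance one running out and back along a bridge), and this is exactly where the hypotheses that $G$ is bipartite and that $\delta(G)\ge 2$ are used; recall $\delta(G)=2$ is available because of the reduction preceding Proposition~\ref{pro:biC6MinDegree}. The length-$4$ case is routine given the block-construction algorithm, the only mild subtlety being the outer-face possibility, dispatched by the vacuous-case observation above.
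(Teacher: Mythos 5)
Your route is the same as the paper's: bipartiteness gives even face lengths, the block-building algorithm of Definition~\ref{def:quadrablock} is meant to rule out exterior $4$-faces, and $C_6$-freeness rules out exterior $6$-faces. Your extra verification in the length-$6$ case --- that the boundary walk of a $6$-face is a genuine $C_6$, with no repeated edge or vertex, using $\delta(G)\ge 2$, simplicity and parity --- is correct and is more careful than the paper, whose proof simply identifies a $6$-face with a $6$-cycle.

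The gap is in your treatment of the unbounded face in the length-$4$ case. The parenthetical justification for ``we may assume the outer face of $G$ is not a $4$-face'' is a non sequitur: the outer face being a $4$-face does not force every face of $G$ to be a $4$-face. Concretely, take the $4$-cycle $axby$ as the outer boundary and join $a$ to $b$ by an internal path of length $6$. This plane graph is bipartite, $C_6$-free (its cycles have lengths $4,8,8$), has $\delta=2$, and even satisfies the extra hypotheses under which the proposition is applied in Lemma~\ref{lem:biC6}; its only $4$-face is the unbounded one, which the algorithm never absorbs (it only adds \emph{bounded} $4$-faces), so the outer $4$-face is an exterior face of the blocks containing $ax,xb,by,ya$, and the conclusion ``length at least $8$'' fails for it. (Quadrangulations such as $K_{2,m}$ drawn with a $4$-face outside behave similarly, so even your ``vacuous'' fallback is not safe under the paper's literal definitions.) Thus this case cannot be closed from the proposition's stated hypotheses alone; one must either restrict the claim to bounded exterior faces, or fix an embedding whose outer face is not a $4$-face, or argue separately in the setting of Lemma~\ref{lem:biC6}. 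In fairness, the paper's own three-line proof silently ignores the unbounded face as well --- its appeal to the definition of quadrangular blocks only excludes bounded $4$-faces --- so you spotted a real subtlety; the problem is that the patch you propose for it does not work as stated.
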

\begin{proof}
    Since $G$ is bipartite, $G$ only has even faces. By definition of quadrangular blocks we know that exterior faces cannot be $4$-faces. Since $G$ is $C_6$-free, we deduce that each exterior face has length at least $8$.
\end{proof}

\subsubsection{Proof of Theorem \ref{thm:biC6}}~

Our main target here is to show that 
\begin{lemma}
\label{lem:biC6}
    Let $G$ be a $C_6$-free bipartite plane graph on $n$ vertices, $\delta(G)=2$, and each degree $2$ vertex of it has at least one neighbor of degree at most $3$. Let $k$ be the number of degree $2$ vertices in $G$ and $e_{2,3}$ be the number of edges in $G$ such that one vertex of the edge has degree $2$ and the other has degree $3$. Any quadrangular block $B$ in $G$ satisfies$$
    2v(B)-4e(B)+8f(B)-2k(B)-e_{2,3}(B)\le 0.
    $$
\end{lemma}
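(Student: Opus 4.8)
The plan is to prove Lemma~\ref{lem:biC6} by the same block-by-block case analysis used for Lemma~\ref{lem:C5}, exhausting the three possible quadrangular blocks $K_2$, $C_4$, $K_{2,3}$ classified above. For each block $B$ I would bound the four contribution quantities $v(B),e(B),f(B),k(B),e_{2,3}(B)$ from above (resp.\ below, for the negative terms) and check that the linear combination $2v(B)-4e(B)+8f(B)-2k(B)-e_{2,3}(B)$ is at most $0$. The edge count $e(B)$ is exact in each case ($1$ for $K_2$, $4$ for $C_4$, $6$ for $K_{2,3}$), and the interior-face count is $0$, $1$, $2$ respectively, so the work is in bounding the exterior-face contribution via Proposition~\ref{pro:biC6} (each exterior face has length $\ge 8$) and in controlling $v(B)$ through the junction-vertex structure forced by $\delta(G)=2$ and the hypothesis that every degree-$2$ vertex has a neighbor of degree $\le 3$.

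For $B=K_2$ I would split into whether an exterior face is a $4$-face: if not, both exterior faces have length $\ge 8$ and $f(B)\le 1/8+1/8=1/4$, so $8f(B)\le 2$; matching this against $-4e(B)=-4$ already wins regardless of the $v$-term (which is at most $2$), and the $-2k(B)-e_{2,3}(B)$ terms only help. The subtle point is the case where $B$ sits on a $4$-face: then the two endpoints of the edge have degree $\ge 3$ inside that $4$-face, forcing enough distinct quadrangular blocks through each endpoint to drive $v(B)$ down; I expect this to parallel Case~1(i) of Lemma~\ref{lem:C5}. For $B=C_4$, all four vertices must be junction vertices or degree-$2$ vertices; using $e(B)=4$, one interior face, and two exterior faces of length $\ge 8$ gives $-16+8(1+2/8)=-6$, so any nonnegative value of $2v(B)-2k(B)-e_{2,3}(B)\le$ (something $\le 6$) suffices — here I would carefully check that a $C_4$ with several degree-$2$ vertices, which inflates $v(B)$, simultaneously inflates $k(B)$ and $e_{2,3}(B)$ enough to compensate, since each degree-$2$ vertex of the $C_4$ contributes $1$ to $v(B)$ but also $1$ to $k(B)$ and its two incident edges are candidates for $e_{2,3}$. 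For $B=K_{2,3}$, $e(B)=6$ and there are $2$ interior faces and $4$ exterior edges lying on exterior faces of length $\ge 8$, so $-24+8(2+4/8)=-4$, and again one bounds $v(B)$ against $k(B),e_{2,3}(B)$.

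The main obstacle I anticipate is the bookkeeping in the $C_4$ and $K_{2,3}$ cases where the block contains degree-$2$ vertices: one must verify that the ``$+2v(B)$'' gain from a degree-$2$ vertex $v$ (which, being in only one or few blocks, contributes nearly the full $1$ to $v(B)$) is always outweighed by the ``$-2k(B)-e_{2,3}(B)$'' terms it generates, and crucially that the hypothesis ``every degree-$2$ vertex has a neighbor of degree $\le 3$'' is exactly what prevents a bad configuration — namely it guarantees that at least one edge incident to such a $v$ is counted in $e_{2,3}(B)$ unless that neighbor has degree $2$, in which case the bipartite $C_6$-freeness constrains the block further. I would handle this by, in each subcase, listing the degree-$2$ vertices of $B$, writing $v(B)$ as a sum over all vertices, and pairing each degree-$2$ term with its contributions to $k(B)$ and $e_{2,3}(B)$; the arithmetic should then close with room to spare in every subcase, mirroring the ``each block has $0$ contribution'' behavior of the extremal construction in Figure~\ref{fig:biC6Extremal}.
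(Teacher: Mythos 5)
Your overall strategy is the paper's own: a case analysis over the three blocks $K_2$, $C_4$, $K_{2,3}$, with $f(B)$ bounded via Proposition~\ref{pro:biC6}, $v(B)$ controlled by junction vertices, the $-2k(B)-e_{2,3}(B)$ terms absorbing degree-$2$ vertices, and the hypothesis used in the $K_{2,3}$ case to force one of the two block-degree-$3$ vertices adjacent to the interior degree-$2$ vertex to be a non-junction. However, two of your stated steps are wrong as written. For $B=K_2$, the anticipated ``subtle case'' where the edge sits on a $4$-face is vacuous: by Definition~\ref{def:quadrablock} an edge lying in a $4$-face is never a trivial quadrangular block (this is where the analogy with Case~1(i) of Lemma~\ref{lem:C5} breaks down, since there it was exterior \emph{pseudofaces} of length $4$ next to trivial triangular blocks that could occur). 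And in the genuine case your claim that $f(B)\le 1/4$ ``already wins regardless of the $v$-term (which is at most $2$)'' is arithmetically false: $2\cdot 2-4\cdot 1+8\cdot\tfrac14=2>0$. You must use that $\delta(G)=2$ forces both endpoints of a trivial block to be junction vertices (a non-junction endpoint would have degree $1$), giving $v(B)\le 1$ and hence total $\le 0$, exactly as in the paper.

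Second, in the $C_4$ case the face contribution is counted per exterior \emph{edge}, not per exterior face: there are four exterior edges, each contributing at most $1/8$, so $f(B)\le 1+4/8$ and the budget is $-4e(B)+8f(B)\le -4$, not $-6$. Your threshold ``any value of $2v(B)-2k(B)-e_{2,3}(B)\le 6$ suffices'' is therefore too lax; the correct requirement is $\le 4$, and the subcase with four junction vertices attains exactly $4$, so the inequality closes with equality (as it must, since each block in the extremal construction has zero contribution), not ``with room to spare.'' The same warning applies to the tight $K_{2,3}$ subcases in which both block-degree-$2$ exterior vertices are junctions: there your budget $-24+8(2+4/8)=-4$ is correct, but you need $e_{2,3}(B)\ge 2$ (resp.\ $\ge 1$) from the edges joining the interior degree-$2$ vertex to its degree-$3$ neighbor(s), which is precisely where the hypothesis enters. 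With these corrections your plan does carry through and coincides with the paper's proof.
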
 Once we have proved this, we have $$
2v_G-4e_G+8f_G-2k-e_{2,3} = \sum_{B\in\mathcal{Q}}(2v(B)-4e(B)+8f(B)-2k(B)-e_{2,3}(B))\le 0.
$$ Combining this with Euler's formula $v_G-e_G+f_G=2$ finishes the proof.

\begin{proof}
We do casework to proceed with the proof. We distinguish the cases according to $B$.

\textbf{Case 1:} $B$ is $K_2$.

Since $\delta(G)=2$, each vertex of $B$ must be a junction vertex. This gives $v(B)\le 1/2+1/2=1$. By Proposition \ref{pro:biC6}, we know that $f(B)\le 1/8+1/8$ since each exterior face has length at least $8$. With $e(B)=1$, we have $2v(B)-4e(B)+8f(B)-2k(B)-e_{2,3}(B)\le 2v(B)-4e(B)+8f(B)\le 0$.

\textbf{Case 2:} $B$ is $C_4$.

Since $G$ is connected, we know that there is at least one junction vertex in $B$ other than the trivial case that $C_4$ is the whole graph. By Proposition \ref{pro:biC6}, we always have $f(B)\le 1+4/8$. There are four different scenarios.
\begin{enumerate}[(i)]
    \item There's $1$ junction vertex in $B$. In this case, we have $v(B)\le 3+1/2$. Since the other three exterior vertices are not junction vertices, we know that they are of degree $2$, so $k(B)=3$. With $e(B)=4$ and $f(B)\le 1+4/8$, we have $2v(B)-4e(B)+8f(B)-2k(B)-e_{2,3}(B)\le 2v(B)-4e(B)+8f(B)-2k(B) \le -3$.
    \item There are $2$ junction vertices in $B$. In this case, we have $v(B)\le 2+2/2$. There are two vertices of degree $2$ so $k(B)=2$. With $e(B)=4$ and $f(B)\le 1+4/8$, we have $2v(B)-4e(B)+8f(B)-2k(B)-e_{2,3}(B)\le 2v(B)-4e(B)+8f(B)-2k(B) \le -2$.
    \item There are $3$ junction vertices in $B$ so $v(B)\le 1+3/2$. There is one vertex of degree $2$ so $k(B)=1$. With $e(B)=4$ and $f(B)\le 1+4/8$, we have $2v(B)-4e(B)+8f(B)-2k(B)-e_{2,3}(B)\le 2v(B)-4e(B)+8f(B)-2k(B) \le -1$.
    \item There are $4$ junction vertices in $B$ so $v(B)\le 4/2$ and $k(B)=0$. With $e(B)=4$ and $f(B)\le 1+4/8$, we have $2v(B)-4e(B)+8f(B)-2k(B)-e_{2,3}(B)\le 2v(B)-4e(B)+8f(B)-2k(B) \le 0$.
\end{enumerate}

\textbf{Case 3:} $B$ is $K_{2,3}$.

Since $G$ is connected, we know that there is at least one junction vertex in $B$ other than the trivial case that $K_{2,3}$ is the whole graph, which would not be the case since we are considering a graph on more than $5$ vertices. Therefore, by Proposition \ref{pro:biC6}, we always have $f(B)\le 2+4/8$ and $e(B)=6$. By assumption, the two exterior vertices adjacent to the degree $2$ interior vertex cannot both be junction vertices, so there are four different scenarios.
\begin{enumerate}[(i)]
    \item Neither of the two degree $2$ exterior vertex in $B$ is a junction vertex. In this case, since there is at least one junction vertex, we have $v(B)\le 4+1/2$ and $k(B)=3$. Thus, we have $2v(B)-4e(B)+8f(B)-2k(B)-e_{2,3}(B)\le 2v(B)-4e(B)+8f(B)-2k(B) \le -1$.
    \item One of the two degree $2$ exterior vertices in $B$ is a junction vertex. In this case, $v(B)\le 4+1/2$ and $k(B)\ge 2$. Note that at least one of the two degree $3$ exterior vertices is not a junction vertex, we also have $e_{2,3}(B)\ge 2$. Thus, we have $2v(B)-4e(B)+8f(B)-2k(B)-e_{2,3}(B)\le -1$.
    \item Both of the two degree $2$ exterior vertices in $B$ are junction vertices, and $B$ has $2$ junction vertices in all. In this case, we have $v(B) \le 3+2/2$ and $k(B)=1$, $e_{2,3}(B)=2$. Thus, we have $2v(B)-4e(B)+8f(B)-2k(B)-e_{2,3}(B)\le 0$.
    \item Both of the two degree $2$ exterior vertices in $B$ are junction vertices, and $B$ has $3$ junction vertices in all. In this case, we have $v(B) \le 2+3/2$ and $k(B)=1$, $e_{2,3}(B)=1$. Thus, we have $2v(B)-4e(B)+8f(B)-2k(B)-e_{2,3}(B)\le 0$.
\end{enumerate}

\end{proof}

\subsection{\texorpdfstring{$C_8$}{C8}-free graphs}~

In this section, we prove
\thmBiCEight*

\subsubsection{Extremal construction showing sharpness}~

The extremal construction below shows that the bound in Theorem \ref{thm:biC8} is sharp.
\begin{figure}[H]
    \centering
    \includegraphics{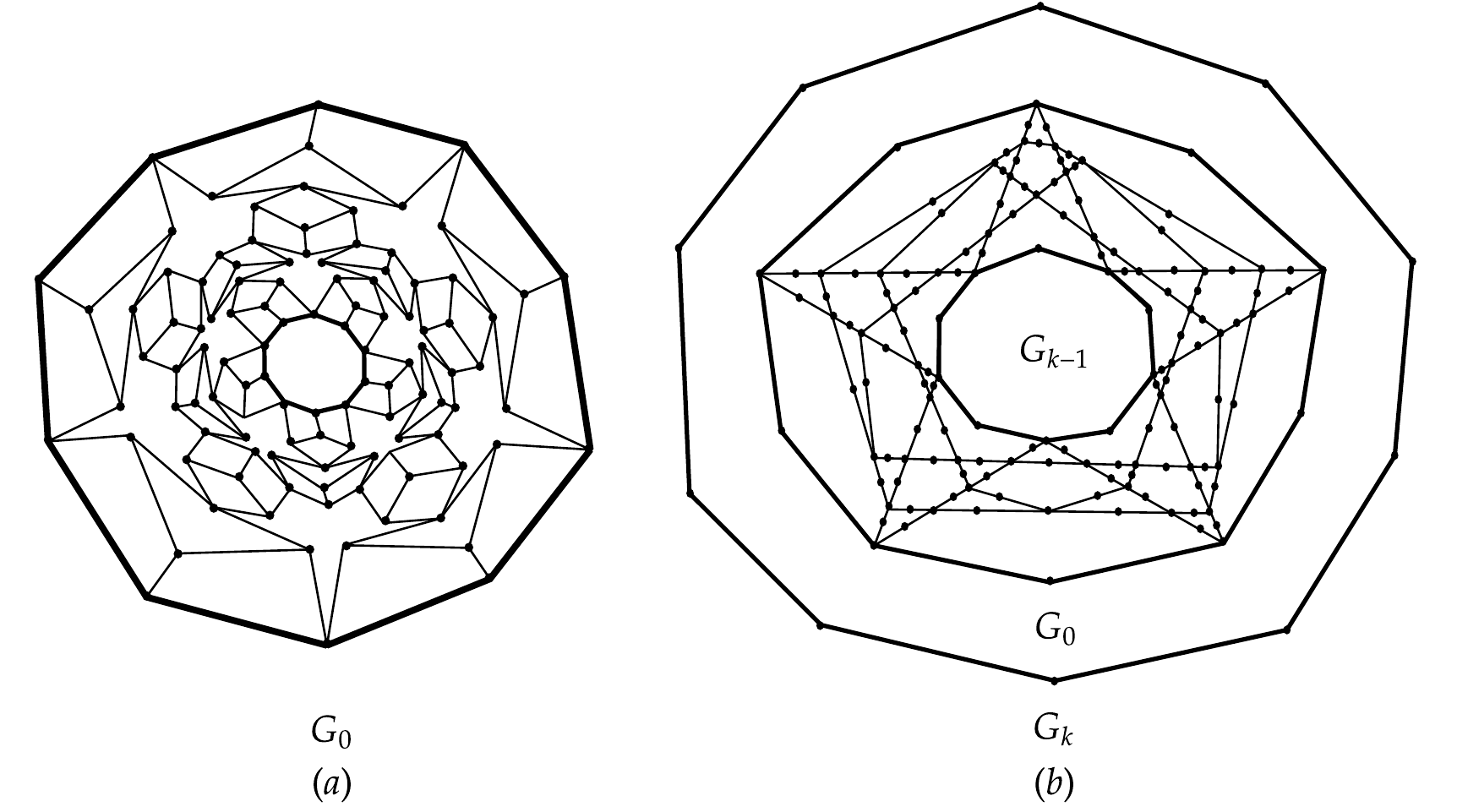}
    \caption{Construction of $G_k$ that gives the extremal construction of Theorem \ref{thm:biC8}}
    \label{fig:biC8Extremal}
\end{figure}

As shown in Figure \ref{fig:biC8Extremal}, we construct an extremal plane graph satisfying the assumptions in Theorem \ref{thm:biC8}. Let $G_0$ be the graph depicted in Figure \ref{fig:biC8Extremal}(a), and in Figure \ref{fig:biC8Extremal}(b), each face of length $6$ is actually a $Q_7$ quadrangular block as shown in Figure \ref{fig:quadraBlocksBiCEight}. We don't draw the interior details for readability considerations.

We construct $G_k$ recursively for any positive integer $k$ via the illustration given in Figure \ref{fig:biC8Extremal}(b): the entire graph $G_{k-1}$ is placed into the central decagon of Figure \ref{fig:biC8Extremal}(b), and the entire $G_0$ is then placed between the two given bold decagons of Figure \ref{fig:biC8Extremal}(b), in such a way that these are identified with the bold decagons in Figure \ref{fig:biC8Extremal}(a). One can check that $G_k$ is bipartite and $C_8$-free with $270k+110$ vertices and  $450k+180$ edges, which satisfy that $e_G=5(v_G-2)/3$.

\begin{remark}
This construction comes from the proof of Lemma \ref{lem:biC8}. In the extremal graph, each quadrangular block has $0$ ``contribution".
\end{remark}

\subsubsection{Preparatory propositions}

\begin{proposition}
\label{pro:biC8Block}
    Let $G$ be a $C_8$-free bipartite plane graph on $n$ vertices with $\delta(G)\ge 3$. A quadrangular block in $G$ contains at most $7$ vertices.
\end{proposition}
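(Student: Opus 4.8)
The plan is to argue by contradiction: suppose a quadrangular block $B$ of $G$ has at least $8$ vertices, and mimic the proof of the corresponding statement for $C_6$. I would first recall from Definition~\ref{def:quadrablock} that a non-trivial quadrangular block is assembled by starting from two $4$-faces sharing one or two edges and then repeatedly attaching further $4$-faces along edges of the current outer boundary. In a bipartite graph two distinct $4$-faces can only share one edge or two consecutive edges: two meeting in exactly one edge form a hexagonal ``double quadrilateral'' on $6$ vertices (a $6$-cycle with one chord), and two meeting in two edges form $K_{2,3}$ on $5$ vertices; so a block on at least $8$ vertices is obtained from one of these by at least one further attachment, each attachment (along a path of one, two or three boundary edges) adding at most two vertices. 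A block with at least two $4$-faces is $2$-connected, so throughout I keep track of the outer boundary of the partial block as a cycle.

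The argument hinges on two facts coming from the hypotheses. First, from $\delta(G)\ge 3$ together with bipartiteness: no $4$-cycle of $G$ properly encloses a vertex of $G$. (A vertex enclosed by a $4$-cycle has, by bipartiteness, at most two of its at least three neighbours on that $4$-cycle, hence a neighbour strictly inside; iterating this inside the resulting ever-smaller enclosed quadrilateral regions is impossible in a finite graph.) Hence every bounded $4$-face of $B$ is actually a face of $G$, so $B$ has no vertex of degree $2$ whose two incident $B$-faces are both $4$-cycles --- such a vertex would have degree $2$ in $G$ --- and in particular $K_{2,3}$ is never a block of $G$, since in every plane embedding it contains exactly such a vertex. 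Second, from $C_8$-freeness: the outer boundary of $B$ is a cycle of even length, and it is neither $8$ (it would be a $C_8$) nor $\ge 10$. For the last point, if some $4$-face meets the boundary in three consecutive edges then its fourth edge is a chord shortening the boundary by $2$; in general one peels off a $4$-face meeting the boundary in a path, reducing the boundary length by $2$, and descends to an $8$-cycle. The one delicate configuration is when the $4$-faces of $B$ wrap around a larger bounded face, so that $B$ contains an annular quadrangulation between its outer cycle and an inner cycle; there an $8$-cycle consisting of a sub-path of the outer cycle, a radial edge, a sub-path of the inner cycle and another radial edge is readily exhibited.

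Given these facts, only two cases remain. If the outer boundary of $B$ is a $4$-cycle, then by planarity it encloses at most one vertex, so $B$ is $C_4$ or $K_{2,3}$; the latter is excluded, so $v_B\le 4<8$. If the outer boundary is a $6$-cycle, I claim it encloses at most one vertex, and hence $v_B\le 7<8$: two enclosed vertices $u,w$, each lying in a $4$-face together with hexagon vertices, are either non-adjacent --- and then their incident $4$-faces share no edge, so $u$ and $w$ belong to different blocks --- or adjacent, in which case bipartiteness determines the $4$-face through $uw$ and a short check of the few sub-cases produces either a $C_8$ or a vertex of $B$ of degree $2$ with both incident $B$-faces $4$-cycles, both contradictions (and having three or more enclosed vertices only makes the block larger and a contradiction easier to extract). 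Either way we reach a contradiction with $v_B\ge 8$.

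The step I expect to be the main obstacle is the bound on the outer boundary length in the wrap-around case --- locating a $C_8$ whenever the $4$-faces of $B$ enclose a bounded face of length at least $6$ --- together with the closely related claim that a hexagonal boundary encloses at most one vertex; these are the only places requiring genuine (though short) case analysis, everything else being the same kind of bookkeeping as in the $C_6$ argument.
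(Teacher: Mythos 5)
Your overall strategy --- first bounding the length of the outer boundary cycle of the block and then the number of enclosed vertices --- is genuinely different from the paper's proof, which simply grows the block one $4$-face at a time (two generating $4$-faces must share exactly one edge, since the $K_{2,3}$ configuration forces a vertex of degree $2$; attaching a third $4$-face creates a $C_8$ or a degree-$2$ vertex except in the $Q_7$ configuration; attaching a fourth face to $Q_7$ always does). Unfortunately, as written your argument has genuine gaps. The most serious is your first ``fact'': it is false that $\delta(G)\ge 3$ and bipartiteness alone prevent a $4$-cycle from properly enclosing a vertex --- the cube $Q_3$ is planar, bipartite and $3$-regular, and in its standard embedding the outer $4$-cycle encloses four vertices. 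Your iteration breaks down exactly there: a neighbour strictly inside the quadrilateral does not produce a smaller enclosing quadrilateral, so there is no descent and no contradiction with finiteness. The statement could only be rescued by also invoking $C_8$-freeness (the cube contains a $C_8$), which you explicitly do not use; and this fact underpins both your exclusion of $K_{2,3}$ and every later ``degree-$2$ vertex'' contradiction, since those need the relevant $4$-cycle faces of $B$ to be faces of $G$. (The $K_{2,3}$ exclusion itself can be done directly, as in the paper: its middle vertex has both incident $G$-faces among the generating $4$-faces, hence degree $2$ in $G$.)

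The boundary-length bound is also not established. Peeling a $4$-face that meets the boundary in a single edge \emph{increases} the boundary length by $2$ (only a three-edge path decreases it), so ``reducing the boundary length by $2$'' is wrong as stated; the discrete intermediate-value idea (even lengths, steps of size at most $2$, start at $\ge 10$, end at $4$, hence some intermediate cycle of length exactly $8$) can probably be salvaged, but only after proving that a peeling order exists in which every intermediate region is a disc bounded by a single cycle of $G$, which you do not do. In the annular (wrap-around) case the proposed $C_8$ built from two boundary sub-paths and two ``radial edges'' need not exist: a thick annular quadrangulation has no edge joining the two boundary cycles, so that case needs a different argument. Finally, in the hexagonal-boundary case, the claim that two non-adjacent enclosed vertices must lie in different blocks is unjustified --- their $4$-faces could be linked through $4$-faces spanned by boundary vertices and chords --- and the remaining sub-case analysis, which is where the real content lies, is only promised, not carried out. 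So the plan is salvageable in outline, but the key steps are at present either incorrect or missing, whereas the paper's incremental growth argument settles them directly.
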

\begin{proof}
    For a quadrangular block which contains more than $7$ vertices, it must be initiated from two adjacent $4$-faces sharing only one common edge since $\delta(G)\ge 3$. Adding another $4$ face would create a $C_8$ or a degree $2$ vertex with the only exception that the three $4$-faces pairwise share exact one common edge, resulting in a quadrangular block with $7$ vertices. Adding another $4$-face to it in an arbitrary way would create degree $2$ vertex or a $C_8$.
\end{proof}

Now we describe all the possible quadrangular blocks in $G$. As shown in Figure \ref{fig:quadraBlocksBiCEight}, other than the trivial quadrangular block $K_2$, there is one $4$-vertex quadrangular block $C_4$, one $6$-vertex quadrangular block $\Theta_6$,  and one $7$-vertex quadrangular block that is denoted by $Q_7$.

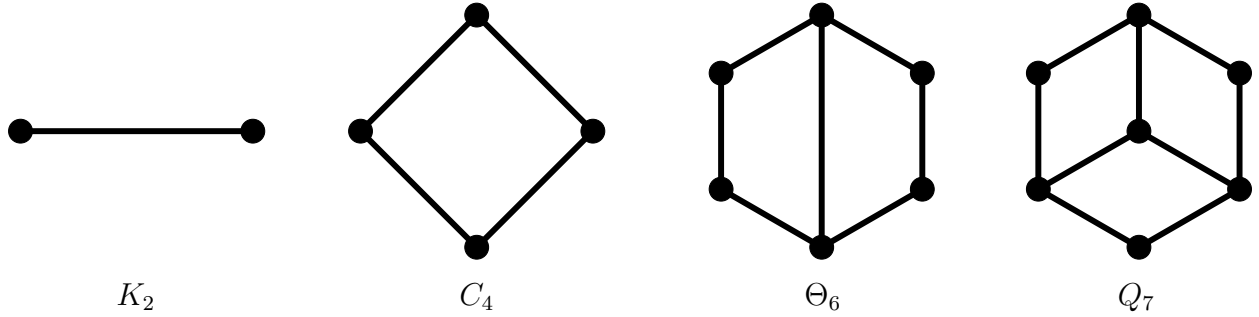
\begin{figure}[H]
    \centering
\begin{tikzpicture}[x=0.75pt,y=0.75pt,yscale=-1,xscale=1]

\draw [line width=2.25]    (431.91,84.91) -- (431.91,202.09) ;
\draw  [fill={rgb, 255:red, 0; green, 0; blue, 0 }  ,fill opacity=1 ] (585.91,143.5) .. controls (585.91,140.19) and (588.6,137.5) .. (591.91,137.5) .. controls (595.23,137.5) and (597.91,140.19) .. (597.91,143.5) .. controls (597.91,146.81) and (595.23,149.5) .. (591.91,149.5) .. controls (588.6,149.5) and (585.91,146.81) .. (585.91,143.5) -- cycle ;
\draw  [fill={rgb, 255:red, 0; green, 0; blue, 0 }  ,fill opacity=1 ] (375.17,114.21) .. controls (375.17,110.89) and (377.86,108.21) .. (381.17,108.21) .. controls (384.49,108.21) and (387.17,110.89) .. (387.17,114.21) .. controls (387.17,117.52) and (384.49,120.21) .. (381.17,120.21) .. controls (377.86,120.21) and (375.17,117.52) .. (375.17,114.21) -- cycle ;
\draw  [fill={rgb, 255:red, 0; green, 0; blue, 0 }  ,fill opacity=1 ] (476.65,172.79) .. controls (476.65,169.48) and (479.34,166.79) .. (482.65,166.79) .. controls (485.96,166.79) and (488.65,169.48) .. (488.65,172.79) .. controls (488.65,176.11) and (485.96,178.79) .. (482.65,178.79) .. controls (479.34,178.79) and (476.65,176.11) .. (476.65,172.79) -- cycle ;
\draw  [fill={rgb, 255:red, 0; green, 0; blue, 0 }  ,fill opacity=1 ] (476.65,114.21) .. controls (476.65,110.89) and (479.34,108.21) .. (482.65,108.21) .. controls (485.96,108.21) and (488.65,110.89) .. (488.65,114.21) .. controls (488.65,117.52) and (485.96,120.21) .. (482.65,120.21) .. controls (479.34,120.21) and (476.65,117.52) .. (476.65,114.21) -- cycle ;
\draw  [fill={rgb, 255:red, 0; green, 0; blue, 0 }  ,fill opacity=1 ] (425.91,84.91) .. controls (425.91,81.6) and (428.6,78.91) .. (431.91,78.91) .. controls (435.23,78.91) and (437.91,81.6) .. (437.91,84.91) .. controls (437.91,88.23) and (435.23,90.91) .. (431.91,90.91) .. controls (428.6,90.91) and (425.91,88.23) .. (425.91,84.91) -- cycle ;
\draw [line width=2.25]    (27.82,143.5) -- (145,143.5) ;
\draw  [fill={rgb, 255:red, 0; green, 0; blue, 0 }  ,fill opacity=1 ] (21.82,143.5) .. controls (21.82,140.19) and (24.51,137.5) .. (27.82,137.5) .. controls (31.14,137.5) and (33.82,140.19) .. (33.82,143.5) .. controls (33.82,146.81) and (31.14,149.5) .. (27.82,149.5) .. controls (24.51,149.5) and (21.82,146.81) .. (21.82,143.5) -- cycle ;
\draw  [fill={rgb, 255:red, 0; green, 0; blue, 0 }  ,fill opacity=1 ] (139,143.5) .. controls (139,140.19) and (141.69,137.5) .. (145,137.5) .. controls (148.31,137.5) and (151,140.19) .. (151,143.5) .. controls (151,146.81) and (148.31,149.5) .. (145,149.5) .. controls (141.69,149.5) and (139,146.81) .. (139,143.5) -- cycle ;
\draw  [line width=2.25]  (431.91,202.09) -- (381.17,172.79) -- (381.17,114.21) -- (431.91,84.91) -- (482.65,114.21) -- (482.65,172.79) -- cycle ;
\draw  [fill={rgb, 255:red, 0; green, 0; blue, 0 }  ,fill opacity=1 ] (375.17,172.79) .. controls (375.17,169.48) and (377.86,166.79) .. (381.17,166.79) .. controls (384.49,166.79) and (387.17,169.48) .. (387.17,172.79) .. controls (387.17,176.11) and (384.49,178.79) .. (381.17,178.79) .. controls (377.86,178.79) and (375.17,176.11) .. (375.17,172.79) -- cycle ;
\draw  [fill={rgb, 255:red, 0; green, 0; blue, 0 }  ,fill opacity=1 ] (425.91,202.09) .. controls (425.91,198.77) and (428.6,196.09) .. (431.91,196.09) .. controls (435.23,196.09) and (437.91,198.77) .. (437.91,202.09) .. controls (437.91,205.4) and (435.23,208.09) .. (431.91,208.09) .. controls (428.6,208.09) and (425.91,205.4) .. (425.91,202.09) -- cycle ;
\draw  [fill={rgb, 255:red, 0; green, 0; blue, 0 }  ,fill opacity=1 ] (535.17,114.21) .. controls (535.17,110.89) and (537.86,108.21) .. (541.17,108.21) .. controls (544.49,108.21) and (547.17,110.89) .. (547.17,114.21) .. controls (547.17,117.52) and (544.49,120.21) .. (541.17,120.21) .. controls (537.86,120.21) and (535.17,117.52) .. (535.17,114.21) -- cycle ;
\draw  [fill={rgb, 255:red, 0; green, 0; blue, 0 }  ,fill opacity=1 ] (636.65,172.79) .. controls (636.65,169.48) and (639.34,166.79) .. (642.65,166.79) .. controls (645.96,166.79) and (648.65,169.48) .. (648.65,172.79) .. controls (648.65,176.11) and (645.96,178.79) .. (642.65,178.79) .. controls (639.34,178.79) and (636.65,176.11) .. (636.65,172.79) -- cycle ;
\draw  [fill={rgb, 255:red, 0; green, 0; blue, 0 }  ,fill opacity=1 ] (636.65,114.21) .. controls (636.65,110.89) and (639.34,108.21) .. (642.65,108.21) .. controls (645.96,108.21) and (648.65,110.89) .. (648.65,114.21) .. controls (648.65,117.52) and (645.96,120.21) .. (642.65,120.21) .. controls (639.34,120.21) and (636.65,117.52) .. (636.65,114.21) -- cycle ;
\draw  [fill={rgb, 255:red, 0; green, 0; blue, 0 }  ,fill opacity=1 ] (585.91,84.91) .. controls (585.91,81.6) and (588.6,78.91) .. (591.91,78.91) .. controls (595.23,78.91) and (597.91,81.6) .. (597.91,84.91) .. controls (597.91,88.23) and (595.23,90.91) .. (591.91,90.91) .. controls (588.6,90.91) and (585.91,88.23) .. (585.91,84.91) -- cycle ;
\draw  [line width=2.25]  (591.91,202.09) -- (541.17,172.79) -- (541.17,114.21) -- (591.91,84.91) -- (642.65,114.21) -- (642.65,172.79) -- cycle ;
\draw  [fill={rgb, 255:red, 0; green, 0; blue, 0 }  ,fill opacity=1 ] (535.17,172.79) .. controls (535.17,169.48) and (537.86,166.79) .. (541.17,166.79) .. controls (544.49,166.79) and (547.17,169.48) .. (547.17,172.79) .. controls (547.17,176.11) and (544.49,178.79) .. (541.17,178.79) .. controls (537.86,178.79) and (535.17,176.11) .. (535.17,172.79) -- cycle ;
\draw  [fill={rgb, 255:red, 0; green, 0; blue, 0 }  ,fill opacity=1 ] (585.91,202.09) .. controls (585.91,198.77) and (588.6,196.09) .. (591.91,196.09) .. controls (595.23,196.09) and (597.91,198.77) .. (597.91,202.09) .. controls (597.91,205.4) and (595.23,208.09) .. (591.91,208.09) .. controls (588.6,208.09) and (585.91,205.4) .. (585.91,202.09) -- cycle ;
\draw [line width=2.25]    (591.91,84.91) -- (591.91,143.5) ;
\draw [line width=2.25]    (591.91,143.5) -- (541.17,172.79) ;
\draw [line width=2.25]    (591.91,143.5) -- (642.65,172.79) ;
\draw  [line width=2.25]  (257.91,84.91) -- (316.5,143.5) -- (257.91,202.09) -- (199.32,143.5) -- cycle ;
\draw  [fill={rgb, 255:red, 0; green, 0; blue, 0 }  ,fill opacity=1 ] (193.32,143.5) .. controls (193.32,140.19) and (196.01,137.5) .. (199.32,137.5) .. controls (202.64,137.5) and (205.32,140.19) .. (205.32,143.5) .. controls (205.32,146.81) and (202.64,149.5) .. (199.32,149.5) .. controls (196.01,149.5) and (193.32,146.81) .. (193.32,143.5) -- cycle ;
\draw  [fill={rgb, 255:red, 0; green, 0; blue, 0 }  ,fill opacity=1 ] (310.5,143.5) .. controls (310.5,140.19) and (313.19,137.5) .. (316.5,137.5) .. controls (319.81,137.5) and (322.5,140.19) .. (322.5,143.5) .. controls (322.5,146.81) and (319.81,149.5) .. (316.5,149.5) .. controls (313.19,149.5) and (310.5,146.81) .. (310.5,143.5) -- cycle ;
\draw  [fill={rgb, 255:red, 0; green, 0; blue, 0 }  ,fill opacity=1 ] (251.91,202.09) .. controls (251.91,198.77) and (254.6,196.09) .. (257.91,196.09) .. controls (261.23,196.09) and (263.91,198.77) .. (263.91,202.09) .. controls (263.91,205.4) and (261.23,208.09) .. (257.91,208.09) .. controls (254.6,208.09) and (251.91,205.4) .. (251.91,202.09) -- cycle ;
\draw  [fill={rgb, 255:red, 0; green, 0; blue, 0 }  ,fill opacity=1 ] (251.91,84.91) .. controls (251.91,81.6) and (254.6,78.91) .. (257.91,78.91) .. controls (261.23,78.91) and (263.91,81.6) .. (263.91,84.91) .. controls (263.91,88.23) and (261.23,90.91) .. (257.91,90.91) .. controls (254.6,90.91) and (251.91,88.23) .. (251.91,84.91) -- cycle ;

\draw (75,219) node [anchor=north west][inner sep=0.75pt]  [font=\Large] [align=left] {$K_2$};
\draw (248,219) node [anchor=north west][inner sep=0.75pt]  [font=\Large] [align=left] {$C_4$};
\draw (422,219) node [anchor=north west][inner sep=0.75pt]  [font=\Large] [align=left] {$\Theta_6$};
\draw (580,219) node [anchor=north west][inner sep=0.75pt]  [font=\Large] [align=left] {$Q_7$};
\end{tikzpicture}
    \caption{All the possible quadrangular blocks in $G$}
    \label{fig:quadraBlocksBiCEight}
\end{figure}

\begin{proposition}
    \label{pro:biC8}
    Let $G$ be a $C_8$-free bipartite plane graph on $n$ vertices with $\delta(G)\ge 3$.
    \begin{enumerate}
        \item If $B$ is a nontrivial quadrangular block, and its exterior face $f$ has only one common edge with an interior face of $B$, then $f$ has length at least $10$. 
        \item If $B$ is $\Theta_6$, and it shares two disjoint exterior edges with an exterior face $f$, then $f$ has length at least $10$.
        \item If $B$ is $\Theta_6$, and it shares three edges with an exterior face $f$, in which two of them are consecutive, then $f$ has length at least $10$.
        \item If $B$ is $\Theta_6$, and it shares two disjoint pairs of consecutive edges with an exterior face $f$, then $f$ has length at least $10$.
        \item If $B$ is $\Theta_6$ or $Q_7$, and it shares three pairwise disjoint edges with an exterior face $f$, then $f$ has length at least $10$.
    \end{enumerate}
\end{proposition}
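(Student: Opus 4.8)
The plan is to prove, in each of the five scenarios, that $l(f)\neq 6$. This suffices: $l(f)$ is even ($G$ is bipartite), $l(f)\neq 4$ (an exterior face of a \emph{nontrivial} block cannot be a $4$-face, since by the maximality built into Definition~\ref{def:quadrablock} it would have been absorbed into $B$), and $l(f)\neq 8$ ($G$ is $C_8$-free and, in the $2$-connected setting in which the proposition is applied, the boundary of a face is a cycle). So from now on assume the boundary $C$ of $f$ is a $6$-cycle, and derive a contradiction.

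The engine is a splicing observation. By maximality of the block, each interior $4$-face of $B$ is a genuine $4$-face of $G$; its boundary is a $4$-cycle, and these $4$-cycles together with the interior edge(s) of $B$ (the chord of $\Theta_6$, or the three edges at the centre of $Q_7$) supply short paths through $B$ joining prescribed boundary vertices. For part~(1): if $C$ shares exactly one edge $e=xy$ with an interior $4$-face $\phi$ of $B$, then $\phi$ gives an $x$--$y$ path of length $3$ and $C\setminus e$ is an $x$--$y$ path of length $5$; if these are internally disjoint their union is a cycle of length $8$, contradicting $C_8$-freeness. If they meet internally, the closed walk of length $8$ they form splits (no repeated edge, as $G$ is simple) into two $4$-cycles through a common vertex, and in the plane one of these $4$-cycles is bounded by two consecutive edges of $B$ and two edges off $B$; one then argues, using the maximality of $B$ and the degree condition $\delta(G)\ge 3$ on the relevant boundary vertices of $B$, that this configuration cannot arise. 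The other parts run on the same lines, now using two or three of the shared edges of $B$ simultaneously.

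For parts (2)--(5) I would fix the labels: $\Theta_6$ as the hexagon $v_1v_2\cdots v_6$ with chord $v_1v_4$ and bipartition classes $\{v_1,v_3,v_5\}$, $\{v_2,v_4,v_6\}$, and analogously for $Q_7$. Two observations interlock. First, a parity count on $C$: since $C$ is an even cycle, the arc of $C$ between two of its vertices $a,b$ has length of the same parity as every $a$--$b$ path in $G$; knowing $|E(C)|=6$ and exactly how many edges of $B$ lie on $C$ pins down these arc lengths. In most of the ``spread out'' configurations this forces some arc of $C$ to have length $1$, hence forces onto $C$ either the chord $v_1v_4$ (impossible: it is interior to $\Theta_6$) or an extra hexagon edge (contradicting that exactly the prescribed edges are shared), killing those cases outright. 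Second, whenever the parity count leaves room, a cycle of length exactly $8$ is produced as in the splicing observation, by concatenating a path of length $2$ or $3$ through the $4$-faces and interior edges of $B$ with the complementary arc of $C$. Part~(5), three pairwise disjoint shared edges, is the most favourable: there $C$ consists of those three edges interleaved with three single-vertex arcs, and the three $4$-faces of $Q_7$ (respectively the two $4$-faces plus the chord of $\Theta_6$) immediately exhibit a spanning $8$-cycle on the eight vertices involved unless a $4$-face adjacent to $B$ appears.

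The step I expect to be the real obstacle is the degenerate planar bookkeeping for $\Theta_6$, above all the antipodal case of part~(2), where $C$ meets $\Theta_6$ in two opposite hexagon edges: the parity count only narrows $C$ to $v_1v_2\,x\,v_4v_5\,y\,v_1$ with $x,y\notin\Theta_6$, and one checks that the eight vertices $v_1,\dots,v_6,x,y$ carry no $8$-cycle at all, so the contradiction must come from planarity rather than from a short cycle. Here the $4$-cycle $v_2v_3v_4x$ cut off by $C$ on the side away from $f$ shares the two hexagon edges $v_2v_3,v_3v_4$ with $\Theta_6$, so it cannot be a face; tracking where the extra edges at $v_3$ and $v_6$ (which exist since $\delta(G)\ge 3$) are forced to go then rules the configuration out. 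Similar care is needed for the intermediate configurations in parts (3) and (4) of Proposition~\ref{pro:biC8}, and that is where most of the proof's length will go.
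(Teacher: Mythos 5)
Where your argument is complete, it is essentially the paper's argument in different packaging. The paper bounds $l(f)$ directly: each arc of the boundary of $f$ between two shared edges has length at least $3$ (length $1$ would be a multiple edge or an extra shared edge of $B$, length $2$ an odd closed walk), which gives $l(f)\ge 8$, $9$ or $10$ according to the case, and then bipartiteness and $C_8$-freeness push this to $10$; part (1) is exactly your splicing of the interior $4$-face with the complementary arc. Your ``parity pinning'' of arc lengths and your refutation of $l(f)=6$ use the same facts, just organized as a proof by contradiction. The real divergence is that you explicitly isolate the degenerate incidences in which the boundary of $f$ meets $B$ in vertices beyond the shared edges (the ``meet internally'' subcase of (1) and the antipodal subcase of (2)); the paper's proof never engages with these, and its treatment of the different-interior-faces subcase of (2) is just a citation of (1), whose own two-line argument presupposes that the $6$-face and the interior $4$-face meet only along the shared edge.

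Precisely there your proposal has a genuine gap, and the route you sketch cannot close it. In the antipodal picture (chord $v_1v_4$, $f=v_1v_2xv_4v_5y$) you promise that tracking the third edges at $v_3,v_6$ (and $x,y$) rules the configuration out, but no contradiction is available: those edges only have to enter the two residual regions bounded by the $4$-cycles $v_2xv_4v_3$ and $v_1yv_5v_6$, and each such region can be filled by a large $C_8$-free bipartite planar graph with minimum degree $3$ (such graphs exist, e.g.\ the extremal graphs of Theorem~\ref{thm:biC8}), joined by single edges to the two boundary vertices that need extra degree, at two vertices of the gadget chosen in the correct colour class and at distance at least $6$ from each other. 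Then any cycle passing through a gadget uses both attachment edges, a gadget path of length at least $6$, and a $v_3$--$x$ (resp.\ $y$--$v_6$) connection of length at least $2$ outside, hence has length at least $10$, while the eight-vertex core has no $8$-cycle at all (its degree-$2$ vertices $v_3$ and $x$ would force the $4$-cycle $v_2v_3v_4x$ inside it). The resulting graph is bipartite, planar, $C_8$-free, of minimum degree $3$ and even $2$-connected; the two residual $4$-cycles are not faces, so $B$ is still exactly $\Theta_6$, yet $f$ is a $6$-face sharing the two antipodal exterior edges. So the configuration you set out to exclude is consistent with all the stated hypotheses, and no planarity/degree bookkeeping of the kind you describe can eliminate it; the same construction defeats the degenerate subcase you leave open in part (1). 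In short, your plan is sound exactly where it reproduces the paper's arc counting, and cannot be completed as described where it goes beyond it: the degenerate incidences have to be excluded by restricting how $f$ meets $B$ (or absorbed into the block-contribution estimate of Lemma~\ref{lem:biC8}, where in these configurations the remaining exterior faces of $B$ are forced to be long), not proved impossible.
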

\begin{proof}~
    \begin{enumerate}
        \item The exterior face $f$ of $B$ cannot have length $4$ by definition \ref{def:quadrablock}.  $f$ cannot have length $6$ because together with its adjacent interior face this would give a $C_8$, and $f$ cannot have length $8$ because $G$ is $C_8$-free. Since $G$ is bipartite and only contains even faces, we know that $f$ has length at least $10$.
        \item If the two disjoint exterior edges shared by $B$ and its exterior face $f$ are from two different interior faces, then the conclusion follows from (1). Otherwise, suppose the two disjoint exterior edges are from the interior face $f_1$, then to avoid multiple edges and making odd cycles, $f$ should have length at least $2+2\times 3 = 8$. Since $G$ is $C_8$-free, we know that $f$ has length at least $10$.
        \item Since $\delta(G)\ge 3$, the pair of consecutive edges belongs to different interior faces. To avoid making odd cycles, $f$ should have length at least $3+3+2=8$. Since $G$ is $C_8$-free, we know that $f$ has length at least $10$.
        \item Since $\delta(G)\ge 3$, both of the two pairs of consecutive edges belong to different interior faces. To avoid making multi-edges or odd cycles, $f$ should have length at least $4+2\times 3=10$.
        \item To avoid multi-edges, $f$ should have length at least $3+3\times 2=9$. Note that $G$ is a bipartite graph, so $f$ has length at least $10$. 
    \end{enumerate}
\end{proof}

\subsubsection{Proof of Theorem \ref{thm:biC8}}~

Our goal is to show that 
\begin{lemma}
\label{lem:biC8}
    Let $G$ be a $C_8$-free bipartite plane graph on $n$ vertices with $\delta(G)\ge 3$. Any quadrangular block $B$ in $G$ satisfies $$
    5f(B)-2e(B)\le 0.
    $$
\end{lemma}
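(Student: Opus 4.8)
The plan is a case analysis on the shape of the quadrangular block $B$. By Proposition~\ref{pro:biC8Block} the only possibilities are the trivial block $K_2$ and the three nontrivial blocks $C_4$, $\Theta_6$, $Q_7$ of Figure~\ref{fig:quadraBlocksBiCEight}; their edge numbers $e(B)$ are $1,4,7,9$, their numbers of interior faces are $0,1,2,3$, and their numbers of exterior edges are $1,4,6,6$, respectively. Since the inequality $5f(B)-2e(B)\le 0$ involves neither $v(B)$ nor $k(B)$, and both $e(B)$ and the number of interior faces of $B$ are fixed by the shape, the only thing to estimate is $\sum_e f_B(e)$ over the exterior edges $e$ of $B$, so everything reduces to lower-bounding the lengths of the exterior faces of $B$. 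Concretely, after fixing an embedding of $G$ whose outer face has length at least $6$ (possible by the reductions used in the earlier sections) and reducing to $2$-connected $G$, if every exterior face of a nontrivial $B$ has length at least $\ell$ then $f(B)\le(\text{number of interior faces})+(\text{number of exterior edges})/\ell$, because each exterior edge contributes $1/l(f)\le 1/\ell$ and this bound survives even when several exterior edges of $B$ share a common exterior face; for $K_2$ the single edge borders two exterior faces and $f(B)\le 2/\ell$.

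The heart of the proof is the claim that \emph{every exterior face of a nontrivial block has length at least $10$}, while every exterior face of a $K_2$ block has length at least $6$. For $K_2$ this is immediate: by the definition of a trivial block the face is not a $4$-face, it is even since $G$ is bipartite, and it is not an $8$-face since its boundary is a cycle, hence a forbidden $C_8$. For a nontrivial $B$ I would split by the number of edges an exterior face $f$ shares with $B$. If $f$ meets an interior face of $B$ in a single edge, Proposition~\ref{pro:biC8}(1) gives $l(f)\ge 10$. If $f$ shares at least two edges with $B$, first note $f$ cannot be a $4$-face --- otherwise the block-generating algorithm of Definition~\ref{def:quadrablock} would have absorbed it into $B$ --- and $f$ cannot contain two consecutive edges meeting at a vertex of degree $2$ inside $B$, since that vertex would then have degree $2$ in $G$, against $\delta(G)\ge 3$. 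For $B=C_4$ the only remaining possibility is that $f$ contains the two \emph{opposite} edges of $C_4$, and then bipartiteness forces the two connecting arcs to have length at least $3$, so $l(f)\ge 1+3+1+3=8$ and hence $l(f)\ge 10$. For $B=\Theta_6$ the surviving multi-edge configurations are exactly those covered by Proposition~\ref{pro:biC8}(2)--(5), each giving $l(f)\ge 10$, together with the case that $f$ contains the two hexagon edges at an endpoint of the chord; for $B=Q_7$ the surviving cases are $f$ containing two consecutive hexagon edges at a degree-$3$ vertex, or two disjoint hexagon edges. I would dispose of these last few cases directly: supposing $l(f)=6$ and using bipartiteness to determine $\partial f$, one re-routes the short arc of $\partial f$ lying on $B$ through the chord (resp.\ the spokes) of $B$ along a path of matching parity of length $4$ or $5$ that is internally disjoint from the rest of $\partial f$, producing an $8$-cycle in $G$ --- a contradiction; a length-$4$ face is excluded as before, and a length-$8$ face is a $C_8$.

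Granting the claim, the rest is bookkeeping. For $K_2$: $f(B)\le 2/6=1/3$, so $5f(B)-2e(B)\le 5/3-2<0$. For $C_4$: $f(B)\le 1+4/10=7/5$, so $5f(B)-2e(B)\le 7-8=-1$. For $\Theta_6$: $f(B)\le 2+6/10=13/5$, so $5f(B)-2e(B)\le 13-14=-1$. For $Q_7$: $f(B)\le 3+6/10=18/5$, so $5f(B)-2e(B)\le 18-18=0$. Summing over all quadrangular blocks gives $5f_G-2e_G\le 0$, and substituting Euler's formula in the form $f_G=2-v_G+e_G$ gives $2-v_G+e_G\le\frac{2}{5}e_G$, that is $e_G\le\frac{5}{3}v_G-\frac{10}{3}$, which is Theorem~\ref{thm:biC8}; note that $\delta(G)\ge 3$ is used only inside the lemma.

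The hard part is the length-$10$ bound for $Q_7$, because that case is tight: one has $5f(B)-2e(B)=0$ for $Q_7$ exactly when all six of its exterior faces are $10$-faces, so there is no slack and one must rule out \emph{every} length-$6$ exterior face of $Q_7$, including the configurations not literally listed in Proposition~\ref{pro:biC8}; the re-routing argument is the tool for this. This tightness is not accidental --- it is precisely why the extremal graphs $G_k$ of Figure~\ref{fig:biC8Extremal} are built from $Q_7$ blocks bordered by $10$-faces, so that every block achieves $5f(B)-2e(B)=0$.
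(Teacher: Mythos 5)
Your proof is correct and follows essentially the same route as the paper: the same classification of quadrangular blocks into $K_2$, $C_4$, $\Theta_6$, $Q_7$, the same strategy of lower-bounding exterior face lengths (at least $6$ for $K_2$, at least $10$ for faces meeting nontrivial blocks, i.e.\ Proposition \ref{pro:biC8}), and the same closing arithmetic. The only deviations are cosmetic: for $C_4$ the paper settles for the weaker estimate $f(B)\le 1+2/10+2/6$ instead of proving that every exterior face has length at least $10$, and for the $Q_7$ intersection patterns not literally listed in Proposition \ref{pro:biC8} the paper writes ``it is easy to check'' where you supply the explicit re-routing argument.
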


Once we have proved this, we have $$
5f_G-2e_G = \sum_{B\in\mathcal{Q}}(5f(B)-2e(B)) \le 0.
$$ Combining this with Euler's formula $v_G-e_G+f_G = 2$ finishes the proof.

\begin{proof}
    We do casework to proceed with the proof. We distinguish four cases according to $B$.

    \textbf{Case 1}: $B$ is $K_2$.

    Since $G$ is a bipartite planar graph, each face has even length. By definition of quadrangular blocks, we know that each exterior face has length at least $6$, so $f(B)\le 1/6+1/6$. With $e(B)=1$, we have $5f(B)-2e(B)\le -1/3$.

    \textbf{Case 2}: $B$ is $C_4$.

    Since $\delta(G)\ge 3$ and each vertex in $B$ is of degree $2$, we know that all of them are junction vertices. So $B$ has at least $3$ exterior faces and at least two of them share only one edge with $B$, so $f(B)\le 1+2/10+2/6$ by Proposition \ref{pro:biC8}. With $e(B)=4$, we have $5f(B)-2e(B) \le -1/3$.

    \textbf{Case 3}: $B$ is $\Theta_6$.

    There are four degree $2$ vertices in $B$, we know that they are junction vertices. So there are at least two edges in $B$ such that each of them is the only edge shared by an interior face of $B$ and an exterior face of $B$. There are three different scenarios.

    \begin{enumerate}[(i)]
        \item There are only four junction vertices. In this case, for any exterior face $f$ of $B$, the common edges of $f$ and $B$ can be one single edge, two disjoint single edges, a pair of consecutive edges or two pairs of consecutive edges. In any case, by Proposition \ref{pro:biC8}, it is easy to check that this exterior face hase length at lesat $10$. Therefore, $f(B)\le 2+6/10$. With $e(B)=7$, we have $5f(B)-2e(B)\le -1$.
        \item There are five junction vertices. In this case, for any exterior face $f$ of $B$, the common edges of $f$ and $B$ can be one single edge, two disjoint single edges, a pair of consecutive edges or a pair of consecutive edges together with a disjoint single edge. In any case, by Proposition \ref{pro:biC8}, it is easy to check that this exterior face hase length at lesat $10$. Therefore, $f(B)\le 2+6/10$. With $e(B)=7$, we have $5f(B)-2e(B)\le -1$.
        \item There are six junction vertices. In this case, for any exterior faces $f$ of $B$, the common edges of $f$ and $B$ can be one single edge, two disjoint single edges or three pairwise disjoint single edges. In any case by Proposition \ref{pro:biC8}, it is easy to check that this exterior face has length at least $10$, so $f(B)\le 2+6/10$. Together with $e(B)=7$, we have $5f(B)-2e(B)\le -1$.
    \end{enumerate}

    \textbf{Case 4}: $B$ is $Q_7$.
    
    There are three degree $2$ vertices in $B$ and they have to be junction vertices. Therefore, for any exterior faces $f$ of $B$, the common edges of $f$ and $B$ can be one single edge, a pair of consecutive edges, two disjoint single edges, a pair of consecutive edges and a disjoint single edge, three pairwise disjoint single edges. In any case by Proposition \ref{pro:biC8}, it is easy to check that this exterior face has length at least $10$. So $f(B)\le 3+6/10$, and with $e(B)=9$, we have $5f(B)-2e(B)\le 0$.

\end{proof}

\begin{remark}
Note that in this case, $v(B)$ does not appear in the block-wise contribution formula, and this indicates that the contribution method used here is actually a rewording of the classic discharging method in planar graph theory \cite{west2020combinatorial}. One can give a proof using discharging method by assigning an unbalanced discharging according to different quadrangular blocks. Thus, in some sense, the contribution method is an extension of the classic discharging method.
\end{remark}

\subsection{\texorpdfstring{$C_8, C_{10}$}{C8, C10}-free graphs}~

By far, using quadrangular blocks, we have studied bipartite planar graphs while forbidding a small cycle to appear as a subgraph. We've also found an interesting result on simultaneously forbidding two small cycles to appear in a bipartite planar graph, which might provide some inspirations for future studies in this field.

Note that in Proposition \ref{pro:biC6MinDegree} we've already showed that for $C_6$-free bipartite planar graphs the minimum degree is at most $2$, hence we prefer to forbid cycles of length larger than $6$ from bipartite planar graphs.
\thmBiCEightCTen*

\subsubsection{Extremal construction showing sharpness}~

The extremal construction below shows that the bound in Theorem \ref{thm:C8C10} is sharp.
\begin{figure}[H]
    \centering
\includegraphics{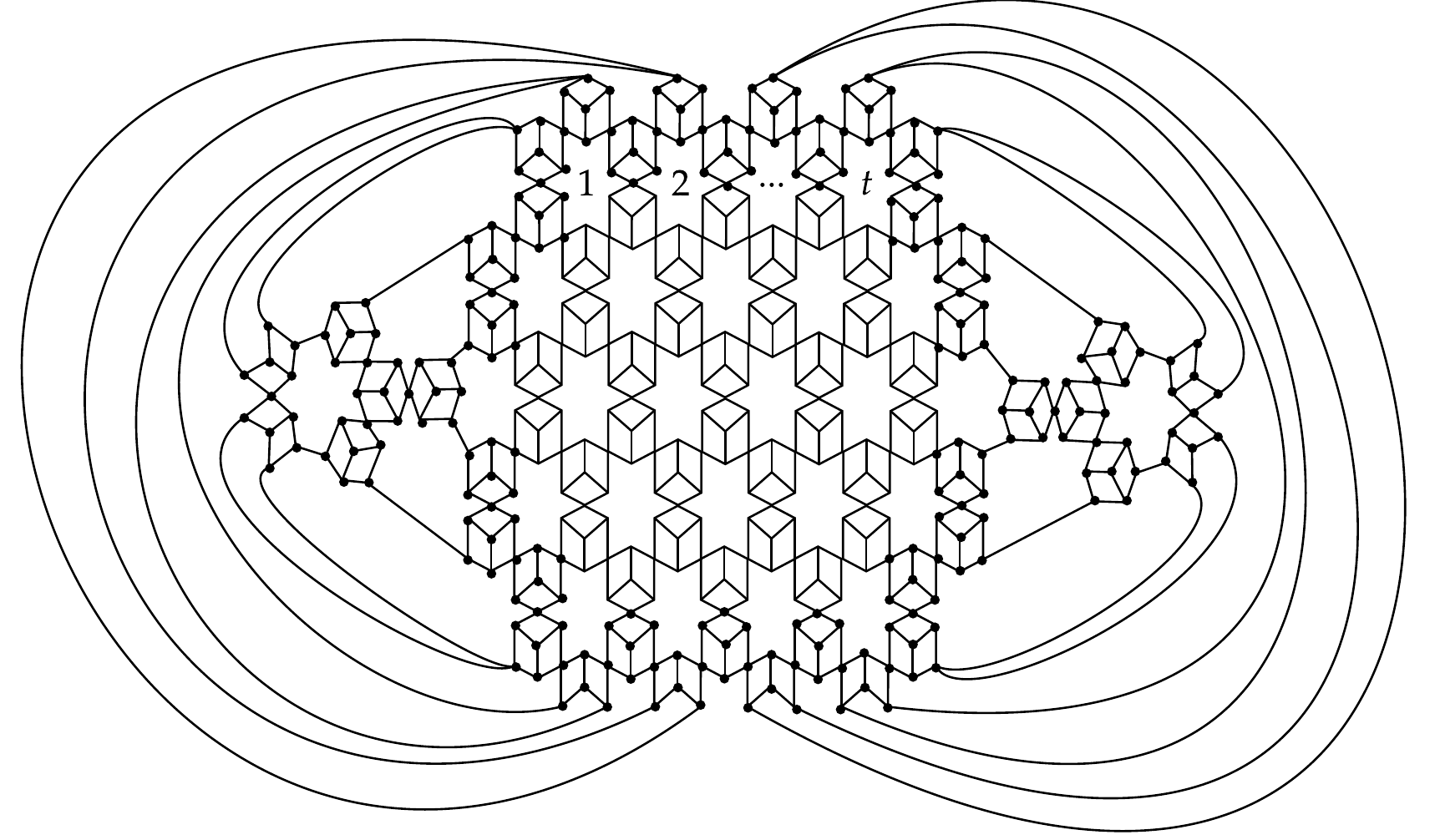}
    \caption{The extremal construction of Theorem \ref{thm:C8C10}.}
    \label{fig:C8C10Extremal}
\end{figure}

In Figure \ref{fig:C8C10Extremal}, there are $66t+155$ vertices and $108t+246$ edges, where $t$ is any positive integer, which satisfy the bound $e_G=18v_G/11 - 84/11$. 

\begin{remark}
This construction comes from the proof of Lemma \ref{lem:C8C10}. In this construction, we only used quadrangular blocks $K_2$ and $Q_7$, and each one of them has $0$ ``contribution".
\end{remark}

\subsubsection{Preparatory propositions}~

For any $C_8,C_{10}$-free bipartite plane graph $G$ with $\delta(G)\ge 3$, since $G$ is a $C_8$-free bipartite plane graph with $\delta(G)\ge3$, by Proposition \ref{pro:biC8Block}, we know that a quadrangular block in $G$ contains at most $7$ vertices. Also, it is easy to check that all four quadrangular blocks in Figure \ref{fig:quadraBlocksBiCEight} are valid quadrangular blocks for $G$, hence they are the quadrangular blocks for $G$.


\begin{proposition}
\label{pro:C8C10K2Face}
Let $G$ be a planar bipartite graph on $n$ vertices which does not contain $C_8$ or $C_{10}$ and let $\delta(G)\ge 3$. If there exists a $6$-face in $G$, we can always add another edge to $G$ to get $G'$ such that $G'$ is a planar bipartite graph on $n$ vertices which does not contain $C_8$ or $C_{10}$ and $\delta(G')\ge 3$.
\end{proposition}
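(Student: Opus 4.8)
\emph{Plan.} The plan is to add one of the three ``main diagonals'' of the given $6$-face. Fix a $6$-face $f$ with boundary cycle $v_1v_2v_3v_4v_5v_6$ (we may assume this boundary is a $6$-cycle, as in the rest of the paper). Since $G$ is bipartite, $v_1,v_3,v_5$ lie in one colour class and $v_2,v_4,v_6$ in the other, so the candidate edges $v_1v_4$, $v_2v_5$, $v_3v_6$ each join the two classes. Drawing any one of them inside $f$ keeps $G'$ planar, keeps it bipartite, leaves $n$ unchanged, and only increases degrees, so $\delta(G')\ge 3$ is automatic. Hence the whole content is to produce one main diagonal whose addition creates no $C_8$ and no $C_{10}$. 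Note first that $H:=v_1v_2\cdots v_6$ together with all three main diagonals is a copy of $K_{3,3}$, so $G$ contains at most two of them; moreover any two main diagonals have interleaving endpoints on $H$, so (with $f$ a face, forcing both to be drawn in the exterior of $H$) they cannot both be edges of $G$. Thus at most one main diagonal is already an edge, and at least two are genuine candidates; after relabelling we may assume two of the candidates are $v_1v_4$ and $v_2v_5$.

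Since the candidates are non-edges and $G$ is $C_8$- and $C_{10}$-free, adding $v_iv_{i+3}$ creates a forbidden cycle if and only if $G$ contains a $v_i$-$v_{i+3}$ path of length $7$ or $9$; call such a candidate \emph{bad}. It suffices to show that $v_1v_4$ and $v_2v_5$ are not both bad. Suppose they are, and choose bad paths $P$ (from $v_1$ to $v_4$) and $Q$ (from $v_2$ to $v_5$) of minimum length, so $|P|,|Q|\in\{7,9\}$. The two basic moves are: (a) if a bad path is internally disjoint from $H$, then $H\cup P$ is a theta graph having $f$ as one of its three faces, and $Q$ must begin inside the theta-face bounded by $P$ and the arc $v_1v_2v_3v_4$ of $H$, hence must pass through a vertex of $\{v_1,v_3,v_4\}$ or through an internal vertex of $P$; (b) if a bad path meets $H$ in a vertex other than its two endpoints, split it there and re-close the two pieces using the short boundary arcs of $H$ (of lengths $2$ and $3$). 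In case (b), re-closing a subpath of length at most $6$ with a boundary arc of length $2$ or $3$ usually yields a $C_8$ or $C_{10}$ already present in $G$ — a contradiction — unless the subpath lengths fall into a short list of exceptions; and the last possibility in case (a) is that $P$ and $Q$ share an internal vertex $w$, after which the subpaths $v_1Pw$ and $wQv_2$ together with the edge $v_1v_2$ form a closed walk whose length can be pinned down.

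\emph{Main obstacle.} I expect the real work to lie in the length-$9$ bad paths: re-closing a length-$9$ path with a length-$3$ boundary arc of $H$ only produces a $C_{12}$, so in that regime one cannot reach a contradiction from a single diagonal and must genuinely play the two interleaving bad paths $P$ and $Q$ off against each other — tracking both their meeting structure in the theta configuration and the parity restrictions coming from bipartiteness — in order to extract a forbidden $C_8$ or $C_{10}$. The length-$7$ analysis and the planarity/parity bookkeeping in case (a) should be routine by comparison. Once the proposition is proved, repeated application fills in every $6$-face, which is how it will be used in the proof of Lemma~\ref{lem:C8C10}.
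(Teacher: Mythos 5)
Your reduction is sound as far as it goes: adding a main diagonal of the $6$-face preserves planarity, bipartiteness, the vertex set and $\delta\ge 3$; at most one main diagonal can already be an edge (two interleaving chords would have to cross outside the face); and a new $C_8$ or $C_{10}$ through the added edge $v_iv_{i+3}$ is equivalent to a $v_i$--$v_{i+3}$ path of length $7$ or $9$ in $G$. But the proposition is precisely the claim that some available diagonal is not ``bad'', and that claim is never established. Your case (b) is settled only ``usually'', with an unenumerated and unresolved ``short list of exceptions''; and the length-$9$ regime is explicitly deferred as the main obstacle, where you yourself note that re-closing with a $3$-arc only produces a $C_{12}$ and that one would have to play the two interleaving bad paths $P$ and $Q$ off against each other. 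That interaction argument -- the decisive step -- is not carried out, so what you have is a plan for a theta-configuration analysis, not a proof of the statement.

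For comparison, the paper's own proof takes a much shorter, purely local route and never chases paths: since $G$ is bipartite with $\delta(G)\ge 3$ and contains no $C_8$ or $C_{10}$, a face sharing an edge with the $6$-face $f$ cannot have length $4$ or $6$ (gluing it to $f$ along the common edge would give a $C_8$ or $C_{10}$) and cannot have length $8$ or $10$ (no such cycles exist at all), so every face adjacent to $f$ has length at least $12$; the paper then inserts the main diagonal, turning $f$ into a $\Theta_6$ quadrangular block, and concludes from this face-length information that no $C_8$ or $C_{10}$ is created. So the intended argument is a brief face-parity computation, whereas you set up a global analysis of bad $v_i$--$v_{i+3}$ paths; if you want to keep your route, you must actually complete the $P$--$Q$ interaction argument (including the parity and planarity bookkeeping you only sketch), which is exactly the part currently missing.
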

\begin{proof}
Suppose that there is a face $f$ of length $6$ in $G$, any face adjacent to $f$ must have length at least $12$, since $\delta(G)\geq 3$ and $C_4$, $C_6$ sharing an edge with $C_6$ creates a $C_8$, $C_{10}$, respectively, and $G$ is $C_8, C_{10}$-free. Therefore, we can safely add a chord in $f$ to make $f$ into a quadrangular block $\Theta_6$ without creating any $C_8$ or $C_{10}$ to get $G'$.
\end{proof}

By Proposition \ref{pro:C8C10K2Face}, it suffices to prove the theorem for graphs which do not have $6$-faces. As for graphs which do have $6$-faces, we can keep adding extra edges to it until there is no $6$-faces and the proof for the resulting graph yields the proof for the original graph.

\subsubsection{Proof of Theorem \ref{thm:C8C10}}~

Now our goal is to prove the following lemma.
\begin{lemma}
\label{lem:C8C10}
Let $G$ be a plane bipartite graph on $n$ vertices which does not contain $C_8$ or $C_{10}$ and let $\delta(G)\ge 3$. Assume that $G$ does not have $6$-faces. Any quadrangular block $B$ in $G$ satisfies 
$$
    24v(B) - 31e(B)+42f(B) \leq 0.
$$
\end{lemma}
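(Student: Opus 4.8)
The plan is to mirror the structure of the proofs of Lemma~\ref{lem:biC6} and Lemma~\ref{lem:biC8}: do a case analysis over the four possible quadrangular blocks $K_2$, $C_4$, $\Theta_6$, $Q_7$ (these are exactly the admissible blocks for $G$ since $G$ is $C_8$-free bipartite with $\delta(G)\ge3$, by Proposition~\ref{pro:biC8Block}), and in each case bound $v(B)$, $e(B)$, $f(B)$ from above using the junction-vertex structure forced by $\delta(G)\ge3$ together with the face-length lower bounds. The key new ingredient is that, because $G$ is also $C_{10}$-free \emph{and} has no $6$-faces (by Proposition~\ref{pro:C8C10K2Face} we may assume this), every exterior face of a nontrivial block now has length at least $12$: the arguments of Proposition~\ref{pro:biC8} push all the "at least $10$" bounds up to "at least $8$ or more", and since $8,10$ are forbidden and faces are even, the true bound is $\ge12$; for a $K_2$ block the exterior faces have length $\ge8$, but again $8,10$ forbidden and no $6$-faces forces length $\ge12$ as well (a $4$-face adjacent to a $K_2$ block edge is impossible by the block definition). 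So I would first record a preparatory proposition stating: every exterior face of any quadrangular block in $G$ (under these hypotheses) has length at least $12$.

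Next I would run the casework. For $B=K_2$: both vertices are junction vertices ($\delta\ge3$), so $v(B)\le 1/2+1/2=1$; $e(B)=1$; $f(B)\le 1/12+1/12=1/6$; then $24v(B)-31e(B)+42f(B)\le 24-31+7=0$, so this case is tight. For $B=C_4$: all four vertices have degree $2$ in $B$ hence are junction vertices, so the block has at least $3$ exterior faces each of length $\ge12$, giving $f(B)\le 1+4/12=4/3$, $e(B)=4$, and $v(B)\le 4\cdot\frac12=2$ (each of the four vertices lies in $\ge2$ blocks); then $24\cdot2-31\cdot4+42\cdot\frac43=48-124+56=-20\le0$. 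For $B=\Theta_6$: $e(B)=7$, the four degree-$2$ vertices of $B$ are junction vertices, and I would split into sub-cases by the number of junction vertices ($4$, $5$, or $6$) exactly as in Case~3 of Lemma~\ref{lem:biC8}, checking in each sub-case that every exterior face has length $\ge12$ (the refinement of Proposition~\ref{pro:biC8} with "$10$" replaced by "$12$"), so $f(B)\le 2+6/12=5/2$; the worst vertex bound is $v(B)\le 1+6\cdot\frac12=$ wait — I should be careful: with $6$ junction vertices $v(B)\le 6\cdot\frac12=3$, with $4$ junction vertices $v(B)\le 2+4\cdot\frac12=4$; then $24v(B)-31\cdot7+42\cdot\frac52=24v(B)-217+105=24v(B)-112$, which is $\le0$ iff $v(B)\le 14/3\approx4.67$, and the largest possible $v(B)$ here is $4$, so we are fine with room to spare. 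For $B=Q_7$: $e(B)=9$, the three degree-$2$ vertices are junction vertices, every exterior face has length $\ge12$ (again by the strengthened Proposition~\ref{pro:biC8}(5) and its companions), so $f(B)\le 3+6/12=7/2$; the vertex bound, taking the three forced junction vertices, is at most $v(B)\le 4+3\cdot\frac12=\tfrac{11}{2}$ in the extreme, but more carefully depends on how many of the remaining four vertices are junction vertices; then $24v(B)-31\cdot9+42\cdot\frac72=24v(B)-279+147=24v(B)-132\le0$ iff $v(B)\le \tfrac{11}{2}$, which is exactly the maximal value, so this case is tight when no extra vertices are junction vertices — matching the remark that the extremal construction uses only $K_2$ and $Q_7$ blocks with zero contribution.

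The main obstacle I anticipate is the bookkeeping in the $Q_7$ case: one must verify that for \emph{every} way an exterior face can meet $Q_7$ — a single edge, a pair of consecutive edges, two disjoint edges, a pair of consecutive edges plus a disjoint edge, or three pairwise disjoint edges — the "no $C_8$, no $C_{10}$, bipartite, no $6$-face" constraints force length $\ge12$, and that the combination of how the six exterior edges are distributed among exterior faces never lets $\sum 1/l(f)$ exceed $6/12$ while simultaneously having enough junction vertices to make $v(B)$ large. Concretely the delicate point is that the extra edge from $Q_7$'s interior chord structure means some exterior faces meeting $Q_7$ along a consecutive pair already have length $\ge 2+2\cdot3=8$ forced, hence $\ge12$ after forbidding $8,10$ and even-ness — I would isolate this as a lemma analogous to Proposition~\ref{pro:biC8} but with the improved bound, proving it by the same "avoid multi-edges and odd cycles, then bump up past forbidden even lengths" reasoning. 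Once that strengthened face-length proposition is in hand, every case reduces to an arithmetic check of $24v(B)-31e(B)+42f(B)\le0$ with the displayed upper bounds, and summing over all blocks together with $v_G-e_G+f_G=2$ (noting $24\cdot2-31\cdot0+42\cdot0$... rather: $24v_G-31e_G+42f_G\le0$ combined with multiplying Euler by the right constant) yields $e_G\le\frac{18}{11}n-\frac{84}{11}$.
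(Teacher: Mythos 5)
Your proposal is correct and takes essentially the same route as the paper: every exterior face of a block has length at least $12$ (not $4$ by the block definition, not $6$ by the lemma's assumption, not $8$ or $10$ since those cycles are forbidden, and even by bipartiteness), and then the four-block casework with the same bounds on $v(B)$, $e(B)$, $f(B)$ gives $0$, $-20$, $-16$, $0$ for $K_2$, $C_4$, $\Theta_6$, $Q_7$. The only difference is that the obstacle you anticipate in the $Q_7$ case is vacuous: no analysis of how an exterior face meets the block (and no strengthened analogue of Proposition~\ref{pro:biC8}) is needed, since the length bound of $12$ follows immediately from the excluded face lengths alone.
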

Once we have proved this, we have 
$$
    24v_G -31e_G+42f_G = \sum_{B \in \mathcal{Q}}(24v(B)-31e(B)+42f(B)) \leq 0. 
$$ Combining this with Euler's formula $v_G-e_G+f_G = 2$ finishes the proof.
\begin{proof}
For any quadrangular block $B$, each exterior face of $B$ cannot have length $4$ by definition of quadrangular blocks, and it cannot have length $6,8,10$ by assumption. So the length of each exterior face is at least $12$, hence for non-trivial quadrangular blocks, the contribution of an exterior edge to faces is at most $1/12$. Now we do casework to proceed with the proof.

\textbf{Case 1}: $B$ is $K_2$.

Since $\delta(G) \geq 3$, $B$ has $2$ junction vertices, so $v(B) \leq 1/2+1/2$. For face contribution we have $f(B) \leq 1/12+1/12$. With $e(B) = 1$, we have $24v(B)-31e(B)+42f(B) \leq 0$.

\textbf{Case 2}: $B$ is $C_4$.

Since $\delta(G) \geq 3$, all four vertices of $B$ are junction vertices, so $v(B) \leq 4/2 = 2$. There is an interior face and four exterior edges of $B$, so we have  
$
f(B) \leq 1+4/12 = 4/3$. With $e(B) =4$, we obtain $24v(B)-31e(B)+42f(B) \leq -20$.

\textbf{Case 3}: $B$ is $\Theta_6$.

Since $\delta(G) \geq 3$, we have at least $4$ junction vertices, so $v(B) \leq 2+4/2=4$, $e(B)=7$. There are two interior faces and six exterior edges of $B$, so $f(B) \leq 2 +6/12 = 5/2$. With $e(B)=7$ we have $24v(B)-31e(B)+42f(B) \leq -16$.

\textbf{Case 4}: $B$  is $Q_7$.

There are at least $3$ junction vertices, so $v(B) \leq 4+3/2=11/2$. With $e(B)=9$ and $f\leq 3+6/12 = 7/2$, we have $24v(B)-31e(B)+42f(B) \leq 0$.
\end{proof}

\section{Quadrangular blocks in triangle-free graphs}

All graphs considered in this section are triangular-free graphs.

\subsection{\texorpdfstring{$C_6$}{C6}-free graphs}~

In this section, we prove 

\thmTriCSix*

\subsubsection{Extremal construction showing sharpness}~

The extremal construction below shows that the bound in Theorem \ref{thm:triC6} is sharp.
\begin{figure}[H]
    \centering
    \includegraphics[scale = 1]{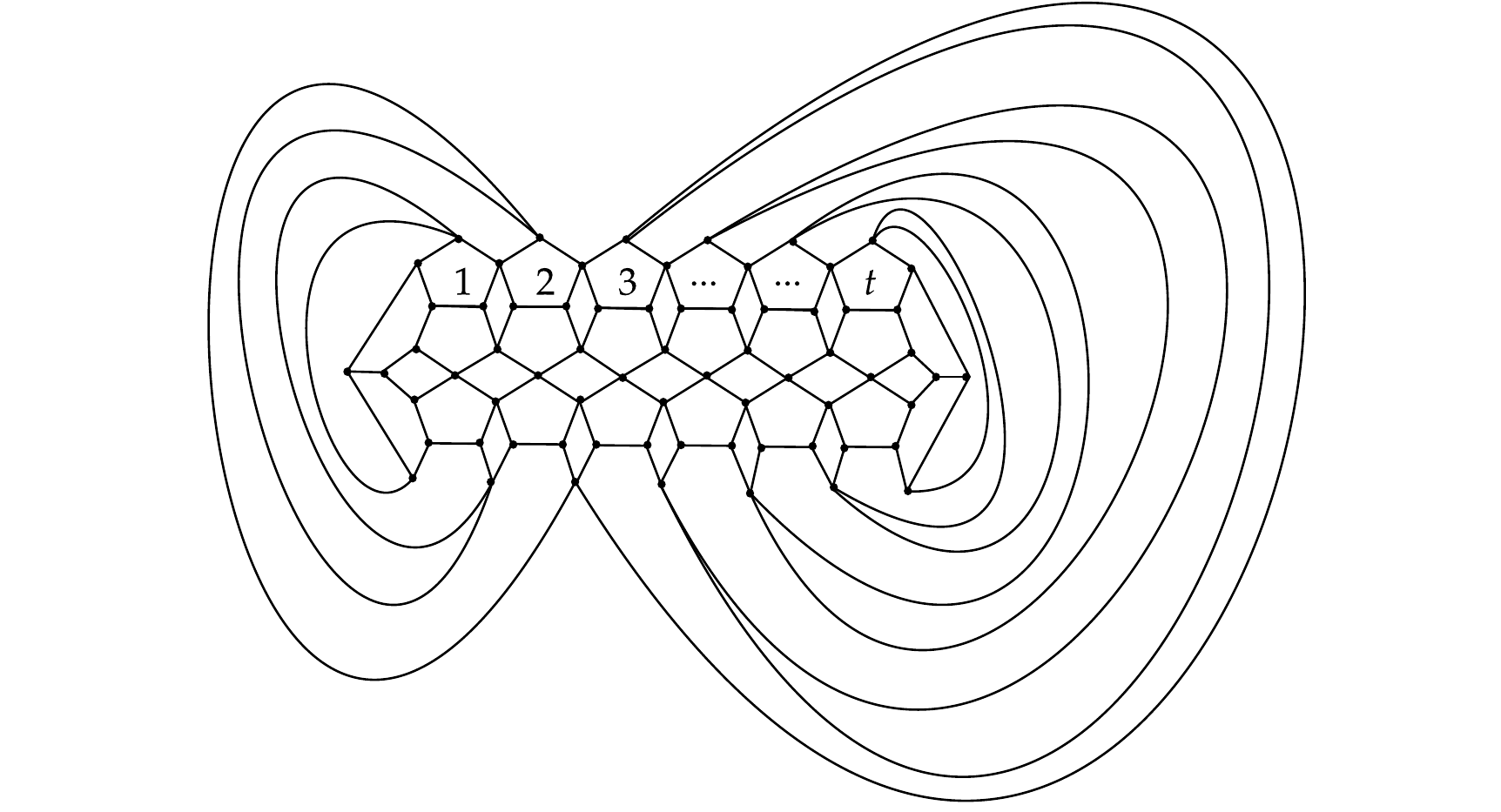}
    \caption{The extremal construction of Theorem \ref{thm:triC6}}
    \label{fig:triC6Extremal}
\end{figure}

In Figure \ref{fig:triC6Extremal}, there are $10t+8$ vertices and $18t+10$ edges where $t$ is any positive integer, which satisfy that $e_G=\lfloor 9v_G/5 - 4 \rfloor$.

\begin{remark}
This construction comes from the proof of Lemma \ref{lem:triC6}. In the extremal construction, other than eight trivial quadrangular blocks $K_2$, each quadrangular block has $0$ ``contribution".
\end{remark}

\subsubsection{Preparatory propositions}

\begin{proposition}
    A quadrangular block in a $C_6$-free triangle-free plane graph $G$ with $\delta(G)\ge 3$ contains at most $4$ vertices.
\end{proposition}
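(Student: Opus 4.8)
The plan is to follow the pattern of the earlier block-size propositions. A nontrivial quadrangular block is, by the algorithm in Definition~\ref{def:quadrablock}, a union of bounded $4$-faces of $G$ --- its interior faces --- glued along edges into a connected configuration. If such a block had at least $5$ vertices, then some two distinct interior faces $F_1,F_2$ of it would share an edge while $|V(F_1)\cup V(F_2)|\ge 5$: otherwise every edge-sharing pair of interior faces would have the same $4$-element vertex set, which is impossible for distinct $4$-cycles in a simple graph, forcing the block to consist of a single face on $4$ vertices. So it suffices to rule out such a pair $F_1,F_2$.

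I would then do a short case analysis on the number of edges shared by $F_1$ and $F_2$. They cannot share three or more edges, since three consecutive edges of a $4$-cycle determine the fourth and would force $F_1=F_2$. If they share exactly one edge, then either they have exactly two common vertices, in which case the boundary of $F_1\cup F_2$ is a cycle of length $6$ in $G$, contradicting $C_6$-freeness; or they have a further common vertex, which forces an edge joining two already-adjacent vertices of $F_1$ and hence a triangle, contradicting triangle-freeness. If they share exactly two edges, those two edges must be consecutive, meeting at a vertex $u$, and then $F_1\cup F_2\cong K_{2,3}$ with $u$ the unique degree-$2$ vertex sitting strictly inside the union; since the only faces of $G$ incident to $u$ are the bounded faces $F_1$ and $F_2$, the vertex $u$ has degree $2$ in $G$, contradicting $\delta(G)\ge 3$. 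Every case yields a contradiction, so the block has at most $4$ vertices.

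The step I expect to be the main obstacle --- and the genuinely new point compared with the bipartite $C_6$-free analogue, where $K_{2,3}$ is an admissible block --- is this last $K_{2,3}$ case: one has to be certain that the middle vertex $u$ is truly interior to the whole block, so that no third edge at $u$ can be added without crossing a face of $G$. This is where the hypothesis $\delta(G)\ge 3$ does the work, and it holds because a vertex interior to a subconfiguration stays interior as further $4$-faces are attached around it. The remaining verifications are routine planar bookkeeping.
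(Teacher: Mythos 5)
Your proof is correct and takes essentially the same route as the paper's: reduce to two distinct interior $4$-faces of the block sharing an edge, where a single shared edge forces a $C_6$ (or a triangle if there is a third common vertex) and two consecutive shared edges force the $K_{2,3}$ configuration whose middle vertex has degree $2$ in $G$, contradicting $\delta(G)\ge 3$; the paper's own proof is just a terser version of this argument. Two small wording repairs: in the third-common-vertex case the new edge joins two vertices at distance two on $F_1$ (not ``already-adjacent'' ones, which would give a multiple edge), and ruling out two distinct $4$-cycles on the same $4$-vertex set uses triangle-freeness rather than mere simplicity (think of $K_4$).
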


\begin{proof}
    For a quadrangular block which contains more than $4$ vertices, by the generating algorithm of a quadrangular block, it must be initiated from two adjacent $4$-faces and they share only one edge as $\delta(G)\ge 3$, which gives a $C_6$, hence a contradiction.
\end{proof}

Now we describe all the possible quadrangular blocks in $G$. As shown in Figure \ref{fig:triC6}, other than the trivial quadrangular block $K_2$, there is only one quadrangular block on $4$ vertices, which is $C_4$.
\begin{figure}[H]
    \centering
    \begin{tikzpicture}[x=0.75pt,y=0.75pt,yscale=-1,xscale=1]

\draw [line width=2.25]    (67.82,183.5) -- (185,183.5) ;
\draw  [fill={rgb, 255:red, 0; green, 0; blue, 0 }  ,fill opacity=1 ] (61.82,183.5) .. controls (61.82,180.19) and (64.51,177.5) .. (67.82,177.5) .. controls (71.14,177.5) and (73.82,180.19) .. (73.82,183.5) .. controls (73.82,186.81) and (71.14,189.5) .. (67.82,189.5) .. controls (64.51,189.5) and (61.82,186.81) .. (61.82,183.5) -- cycle ;
\draw  [fill={rgb, 255:red, 0; green, 0; blue, 0 }  ,fill opacity=1 ] (179,183.5) .. controls (179,180.19) and (181.69,177.5) .. (185,177.5) .. controls (188.31,177.5) and (191,180.19) .. (191,183.5) .. controls (191,186.81) and (188.31,189.5) .. (185,189.5) .. controls (181.69,189.5) and (179,186.81) .. (179,183.5) -- cycle ;
\draw  [line width=2.25]  (297.91,124.91) -- (356.5,183.5) -- (297.91,242.09) -- (239.32,183.5) -- cycle ;
\draw  [fill={rgb, 255:red, 0; green, 0; blue, 0 }  ,fill opacity=1 ] (233.32,183.5) .. controls (233.32,180.19) and (236.01,177.5) .. (239.32,177.5) .. controls (242.64,177.5) and (245.32,180.19) .. (245.32,183.5) .. controls (245.32,186.81) and (242.64,189.5) .. (239.32,189.5) .. controls (236.01,189.5) and (233.32,186.81) .. (233.32,183.5) -- cycle ;
\draw  [fill={rgb, 255:red, 0; green, 0; blue, 0 }  ,fill opacity=1 ] (350.5,183.5) .. controls (350.5,180.19) and (353.19,177.5) .. (356.5,177.5) .. controls (359.81,177.5) and (362.5,180.19) .. (362.5,183.5) .. controls (362.5,186.81) and (359.81,189.5) .. (356.5,189.5) .. controls (353.19,189.5) and (350.5,186.81) .. (350.5,183.5) -- cycle ;
\draw  [fill={rgb, 255:red, 0; green, 0; blue, 0 }  ,fill opacity=1 ] (291.91,242.09) .. controls (291.91,238.77) and (294.6,236.09) .. (297.91,236.09) .. controls (301.23,236.09) and (303.91,238.77) .. (303.91,242.09) .. controls (303.91,245.4) and (301.23,248.09) .. (297.91,248.09) .. controls (294.6,248.09) and (291.91,245.4) .. (291.91,242.09) -- cycle ;
\draw  [fill={rgb, 255:red, 0; green, 0; blue, 0 }  ,fill opacity=1 ] (291.91,124.91) .. controls (291.91,121.6) and (294.6,118.91) .. (297.91,118.91) .. controls (301.23,118.91) and (303.91,121.6) .. (303.91,124.91) .. controls (303.91,128.23) and (301.23,130.91) .. (297.91,130.91) .. controls (294.6,130.91) and (291.91,128.23) .. (291.91,124.91) -- cycle ;

\draw (112,258) node [anchor=north west][inner sep=0.75pt]  [font=\Large] [align=left] {$K_2$};
\draw (287,258) node [anchor=north west][inner sep=0.75pt]  [font=\Large] [align=left] {$C_4$};
\end{tikzpicture}
    \caption{All the possible quadrangular blocks in $G$}
    \label{fig:triC6}
\end{figure}
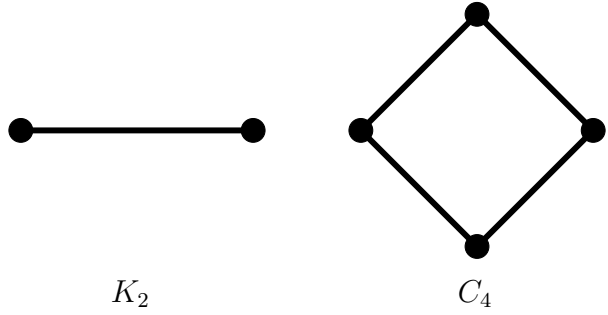

The following proposition is useful in the proof.
\begin{proposition}
\label{pro:triC6}
Let $G$ be a $C_6$-free triangle-free plane graph on $n$ vertices with $\delta(G)\ge3$, then its exterior faces have length at least $5$.
\end{proposition}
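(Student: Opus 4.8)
The plan is to argue directly from the two structural constraints on $G$ — triangle-freeness and $C_6$-freeness — together with the way the generating algorithm of Definition~\ref{def:quadrablock} absorbs $4$-faces into a quadrangular block.

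First I would record the trivial half: since $G$ is triangle-free it has no $3$-faces, so every face of $G$, and in particular every exterior face of any quadrangular block, has length at least $4$. Next, let $B$ be a quadrangular block and let $f$ be an exterior face of $B$; by definition $f$ shares at least one edge $e$ with $B$ but is not contained in $B$. If $f$ were a bounded $4$-face, then running the algorithm of Definition~\ref{def:quadrablock} from any edge of $B$ would eventually reach $e$, at which stage all edges of $f$ would be added to $B$; this contradicts $f$ being exterior. Hence no bounded exterior face of $B$ has length $4$, and combining this with the previous observation, every bounded exterior face of $B$ has length at least $5$.

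The only delicate point is the unbounded face, which the algorithm never absorbs. Here I would invoke $\delta(G)\ge 3$: supposing for contradiction that the outer boundary is a $4$-cycle $v_1v_2v_3v_4$, each $v_i$ has a neighbour strictly inside this cycle, and a short case analysis on how these internal edges/paths join up among the $v_i$ forces either a triangle (impossible, as $G$ is triangle-free) or a $6$-cycle through two consecutive outer edges (impossible, as $G$ is $C_6$-free). In the regime in which this proposition is actually used $G$ is $2$-connected and $n$ is large, so one may instead dispose of the outer face through the standard reduction. Note that since the statement only asserts length at least $5$, we do not need to exclude pentagonal exterior faces, which keeps the argument short.

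I expect the unbounded-face case to be the only genuine obstacle: the bounded case is immediate from the block definition, and triangle-freeness rules out length $3$ for free.
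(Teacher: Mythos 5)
Your first two paragraphs are exactly the paper's proof: triangle-freeness rules out faces of length $3$, and the generating algorithm of Definition \ref{def:quadrablock} cannot terminate while some edge of $B$ still lies on a bounded $4$-face not contained in $B$, so no bounded face of length $4$ can be exterior to a quadrangular block. (Your phrasing ``would eventually reach $e$'' is slightly off --- the cleaner statement is that the while-condition would still hold at termination --- but the idea is the right one, and the paper says nothing more than this.)

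The genuine problem is your third paragraph. The claim that an outer boundary which is a $4$-cycle forces a triangle or a $C_6$ is false, so the announced case analysis cannot be completed. Indeed, take any triangle-free, $C_6$-free planar graph with minimum degree at least $3$ that has a $4$-face --- the extremal construction for Theorem \ref{thm:triC6} itself is such a graph: with $n=10t+8$ vertices and $18t+10$ edges it violates the bound $e\le \tfrac{5}{3}(n-2)$ that Euler's formula forces on plane graphs all of whose faces have length at least $5$, so it must contain a $4$-face --- and re-embed it on the sphere so that this $4$-face becomes the unbounded face. The hypotheses of the proposition (triangle-free, $C_6$-free, $\delta\ge 3$) are properties of the abstract graph, not of the embedding, and the graph is also $2$-connected with $n$ as large as you like, so neither of your escape routes applies; yet the outer face of this plane graph is a $4$-cycle with every other vertex inside it, and by construction there is no triangle and no $C_6$ to be found. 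The honest way to handle the unbounded face (which the paper simply ignores, reading the exclusion of length $4$ straight off the definition) is not a local argument around the outer $4$-cycle but a normalization of the embedding: in such a graph two distinct $4$-faces cannot share an edge (this would create a $C_6$, a triangle, or a degree-$2$ vertex, as in the paper's bound on quadrangular block sizes), so some face is not a $4$-face, and one may choose the plane embedding so that the outer face is such a face before running the contribution argument.
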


\begin{proof}
Since $G$ is triangle-free, its exterior faces cannot have length $3$. By definition of quadrangular blocks, exterior faces cannot have length $4$. So each exterior face has length at least $5$.
\end{proof}

\subsubsection{Proof of Theorem \ref{thm:triC6}}~

Our goal is to show that
\begin{lemma}
\label{lem:triC6}
Let $G$ be a $C_6$-free triangle-free plane graph on $n$ vertices with $\delta(G)\ge 3$. Any quadrangular block $B$ in $G$ satisfies $$
v(B)-5e(B)+10f(B)\le 0.
$$
\end{lemma}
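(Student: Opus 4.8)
The plan is to mimic exactly the case analysis used in the previous lemmas (Lemma \ref{lem:C5}, \ref{lem:biC6}, \ref{lem:biC8}), summing the block-wise inequality $v(B)-5e(B)+10f(B)\le 0$ over all quadrangular blocks and then combining with Euler's formula $v_G-e_G+f_G=2$ to deduce $v_G-5e_G+10f_G\le 0$, i.e. $10e_G-10v_G\le 5e_G-v_G-20$ after substituting $f_G=2-v_G+e_G$; arranging this gives $e_G\le \tfrac{9}{5}v_G-4$, and since $e_G$ is an integer we may take the floor. Since there are only two kinds of quadrangular block here, $K_2$ and $C_4$, the proof of the lemma itself is just two cases.

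For Case 1, $B=K_2$: since $\delta(G)\ge 3$ every vertex of $G$ lies in at least two quadrangular blocks (its incident edges cannot all be trivial blocks hanging off a vertex unless... actually the cleanest argument is that a degree $\ge 3$ vertex is incident to $\ge 3$ edges, which forces it to be a junction vertex of whichever block each edge belongs to), so both endpoints of $e$ are junction vertices and $v(B)\le \tfrac12+\tfrac12=1$. Wait — I should be careful: a vertex of degree $3$ might have all three incident edges in a single $C_4$-block only if the block has $\ge 4$ edges at that vertex, impossible for $C_4$; so indeed each degree-$\ge3$ vertex lies in $\ge 2$ blocks, hence $n_B(v)\le \tfrac12$. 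By Proposition \ref{pro:triC6} each of the two exterior faces of $K_2$ has length $\ge 5$, so $f(B)\le \tfrac15+\tfrac15=\tfrac25$. With $e(B)=1$ we get $v(B)-5e(B)+10f(B)\le 1-5+4=0$.

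For Case 2, $B=C_4$: every vertex of $C_4$ has degree $2$ inside the block, but $\delta(G)\ge 3$, so all four vertices are junction vertices and $v(B)\le 4\cdot\tfrac12=2$. The block has one interior $4$-face, and four exterior edges; by Proposition \ref{pro:triC6} each exterior face has length $\ge 5$, so $f(B)\le 1+4\cdot\tfrac15=\tfrac95$. With $e(B)=4$ we get $v(B)-5e(B)+10f(B)\le 2-20+18=0$. That closes both cases.

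The main obstacle — really the only subtlety — is justifying that in the $K_2$ case both endpoints are junction vertices; unlike the $C_4$ case (where vertices have block-degree $2<3=\delta$, so the claim is automatic), for $K_2$ one must argue from $\delta(G)\ge3$ that no degree-$3$ vertex can have all its incident edges inside one block, which holds because the only non-trivial block $C_4$ contributes at most $2$ edges at any vertex and a trivial block $K_2$ contributes only $1$; hence any vertex of degree $\ge 3$ meets $\ge 2$ blocks. I also need to double-check the arithmetic turning the summed inequality into the stated floor bound, but that is routine: from $v_G-5e_G+10f_G\le 0$ and $f_G=2-v_G+e_G$ we get $v_G-5e_G+20-10v_G+10e_G\le0$, i.e. $5e_G\le 9v_G-20$, i.e. $e_G\le \tfrac{9n-20}{5}=\tfrac95 n-4$, and integrality gives $e_G\le\lfloor \tfrac95 n-4\rfloor$.
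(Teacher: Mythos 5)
Your proposal is correct and follows essentially the same two-case argument as the paper: junction-vertex counts give $v(K_2)\le 1$, $v(C_4)\le 2$, Proposition \ref{pro:triC6} gives $f(K_2)\le 2/5$, $f(C_4)\le 9/5$, and the contribution inequality holds with equality-threshold $0$ in both cases. Your extra justification that a degree-$\ge 3$ vertex must lie in at least two quadrangular blocks (since $K_2$ and $C_4$ supply at most one, respectively two, edges at a vertex) is a correct elaboration of a step the paper leaves implicit.
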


Once we have proved this, we have $$
v_G-5e_G+10f_G = \sum_{B\in\mathcal{Q}}(v(B)-5e(B)+10f(B)) \le 0.
$$ Combining this with Euler's formula $v_G-e_G+f_G=2$ gives $e_G\le 9n/5-4$. Note that $e_G$ is an integer, the proof is finished.

\begin{proof}
We do casework to proceed with the proof. We distinguish the cases according to $B$.

\textbf{Case1}: $B$ is $K_2$.

Since $\delta(G)\ge 3$ we know that both vertices are junction vertices, so $v(B)\le 1/2+1/2$. By Proposition \ref{pro:triC6} we know that $f(B)\le 1/5+1/5$. With $e_G=1$ we have $v(B)-5e(B)+10f(B)\le 0$.

\textbf{Case2}: $B$ is $C_4$.

Since $\delta(G)\ge3$ we know that all of the vertices in $B$ are junction vertices, so $v(B)\le 4\times 1/2$. By Proposition \ref{pro:triC6} we know that $f(B)\le 1+ 4/5$. With $e_G=4$ we have $v(B)-5e(B)+10f(B)\le 0$.
\end{proof}

\subsection{\texorpdfstring{$C_8$}{C8}-free graphs}~

In this section, we prove 
\thmTriCEight*

\subsubsection{A construction showing a lower bound of maximum number of edges}~
 
 The construction below shows that for infinitely many $n$, there exists a $C_8$-free triangle-free planar graph on $n$ vertices with $$
    e_G\geq \frac{9}{5}n-\frac{38}{5}.
    $$
    \begin{figure}[H]
        \centering
        \begin{tikzpicture}[x=0.75pt,y=0.75pt,yscale=-1.8,xscale=1.8]

\draw  [draw opacity=0] (192,4) -- (262,4) -- (262,44) -- (192,44) -- cycle ; \draw   (192,4) -- (192,44)(212,4) -- (212,44)(232,4) -- (232,44)(252,4) -- (252,44) ; \draw   (192,4) -- (262,4)(192,24) -- (262,24) ; \draw    ;
\draw  [draw opacity=0] (192,42) -- (262,42) -- (262,82) -- (192,82) -- cycle ; \draw   (192,42) -- (192,82)(212,42) -- (212,82)(232,42) -- (232,82)(252,42) -- (252,82) ; \draw   (192,42) -- (262,42)(192,62) -- (262,62) ; \draw    ;
\draw  [draw opacity=0] (252.2,4) -- (322.2,4) -- (322.2,44) -- (252.2,44) -- cycle ; \draw   (252.2,4) -- (252.2,44)(272.2,4) -- (272.2,44)(292.2,4) -- (292.2,44)(312.2,4) -- (312.2,44) ; \draw   (252.2,4) -- (322.2,4)(252.2,24) -- (322.2,24) ; \draw    ;
\draw  [draw opacity=0] (252.2,42) -- (322.2,42) -- (322.2,82) -- (252.2,82) -- cycle ; \draw   (252.2,42) -- (252.2,82)(272.2,42) -- (272.2,82)(292.2,42) -- (292.2,82)(312.2,42) -- (312.2,82) ; \draw   (252.2,42) -- (322.2,42)(252.2,62) -- (322.2,62) ; \draw    ;
\draw  [draw opacity=0] (312.4,4.2) -- (382.4,4.2) -- (382.4,44.2) -- (312.4,44.2) -- cycle ; \draw   (312.4,4.2) -- (312.4,44.2)(332.4,4.2) -- (332.4,44.2)(352.4,4.2) -- (352.4,44.2)(372.4,4.2) -- (372.4,44.2) ; \draw   (312.4,4.2) -- (382.4,4.2)(312.4,24.2) -- (382.4,24.2) ; \draw    ;
\draw  [draw opacity=0] (312.4,42.2) -- (382.4,42.2) -- (382.4,82.2) -- (312.4,82.2) -- cycle ; \draw   (312.4,42.2) -- (312.4,82.2)(332.4,42.2) -- (332.4,82.2)(352.4,42.2) -- (352.4,82.2)(372.4,42.2) -- (372.4,82.2) ; \draw   (312.4,42.2) -- (382.4,42.2)(312.4,62.2) -- (382.4,62.2) ; \draw    ;
\draw  [draw opacity=0] (372.6,4.2) -- (442.6,4.2) -- (442.6,44.2) -- (372.6,44.2) -- cycle ; \draw   (372.6,4.2) -- (372.6,44.2)(392.6,4.2) -- (392.6,44.2)(412.6,4.2) -- (412.6,44.2)(432.6,4.2) -- (432.6,44.2) ; \draw   (372.6,4.2) -- (442.6,4.2)(372.6,24.2) -- (442.6,24.2) ; \draw    ;
\draw  [draw opacity=0] (372.6,42.2) -- (442.6,42.2) -- (442.6,82.2) -- (372.6,82.2) -- cycle ; \draw   (372.6,42.2) -- (372.6,82.2)(392.6,42.2) -- (392.6,82.2)(412.6,42.2) -- (412.6,82.2)(432.6,42.2) -- (432.6,82.2) ; \draw   (372.6,42.2) -- (442.6,42.2)(372.6,62.2) -- (442.6,62.2) ; \draw    ;
\draw  [fill={rgb, 255:red, 0; green, 0; blue, 0 }  ,fill opacity=1 ] (474.8,74.9) .. controls (474.8,73.85) and (475.65,73) .. (476.7,73) .. controls (477.75,73) and (478.6,73.85) .. (478.6,74.9) .. controls (478.6,75.95) and (477.75,76.8) .. (476.7,76.8) .. controls (475.65,76.8) and (474.8,75.95) .. (474.8,74.9) -- cycle ;
\draw  [fill={rgb, 255:red, 0; green, 0; blue, 0 }  ,fill opacity=1 ] (189.4,14.1) .. controls (189.4,13.05) and (190.25,12.2) .. (191.3,12.2) .. controls (192.35,12.2) and (193.2,13.05) .. (193.2,14.1) .. controls (193.2,15.15) and (192.35,16) .. (191.3,16) .. controls (190.25,16) and (189.4,15.15) .. (189.4,14.1) -- cycle ;
\draw  [fill={rgb, 255:red, 0; green, 0; blue, 0 }  ,fill opacity=1 ] (209.4,34.1) .. controls (209.4,33.05) and (210.25,32.2) .. (211.3,32.2) .. controls (212.35,32.2) and (213.2,33.05) .. (213.2,34.1) .. controls (213.2,35.15) and (212.35,36) .. (211.3,36) .. controls (210.25,36) and (209.4,35.15) .. (209.4,34.1) -- cycle ;
\draw  [fill={rgb, 255:red, 0; green, 0; blue, 0 }  ,fill opacity=1 ] (210.4,53.1) .. controls (210.4,52.05) and (211.25,51.2) .. (212.3,51.2) .. controls (213.35,51.2) and (214.2,52.05) .. (214.2,53.1) .. controls (214.2,54.15) and (213.35,55) .. (212.3,55) .. controls (211.25,55) and (210.4,54.15) .. (210.4,53.1) -- cycle ;
\draw  [fill={rgb, 255:red, 0; green, 0; blue, 0 }  ,fill opacity=1 ] (200.4,62.1) .. controls (200.4,61.05) and (201.25,60.2) .. (202.3,60.2) .. controls (203.35,60.2) and (204.2,61.05) .. (204.2,62.1) .. controls (204.2,63.15) and (203.35,64) .. (202.3,64) .. controls (201.25,64) and (200.4,63.15) .. (200.4,62.1) -- cycle ;
\draw  [fill={rgb, 255:red, 0; green, 0; blue, 0 }  ,fill opacity=1 ] (190.4,72.1) .. controls (190.4,71.05) and (191.25,70.2) .. (192.3,70.2) .. controls (193.35,70.2) and (194.2,71.05) .. (194.2,72.1) .. controls (194.2,73.15) and (193.35,74) .. (192.3,74) .. controls (191.25,74) and (190.4,73.15) .. (190.4,72.1) -- cycle ;
\draw  [fill={rgb, 255:red, 0; green, 0; blue, 0 }  ,fill opacity=1 ] (220.4,23.1) .. controls (220.4,22.05) and (221.25,21.2) .. (222.3,21.2) .. controls (223.35,21.2) and (224.2,22.05) .. (224.2,23.1) .. controls (224.2,24.15) and (223.35,25) .. (222.3,25) .. controls (221.25,25) and (220.4,24.15) .. (220.4,23.1) -- cycle ;
\draw  [fill={rgb, 255:red, 0; green, 0; blue, 0 }  ,fill opacity=1 ] (220.4,4.1) .. controls (220.4,3.05) and (221.25,2.2) .. (222.3,2.2) .. controls (223.35,2.2) and (224.2,3.05) .. (224.2,4.1) .. controls (224.2,5.15) and (223.35,6) .. (222.3,6) .. controls (221.25,6) and (220.4,5.15) .. (220.4,4.1) -- cycle ;
\draw  [fill={rgb, 255:red, 0; green, 0; blue, 0 }  ,fill opacity=1 ] (240.4,24.1) .. controls (240.4,23.05) and (241.25,22.2) .. (242.3,22.2) .. controls (243.35,22.2) and (244.2,23.05) .. (244.2,24.1) .. controls (244.2,25.15) and (243.35,26) .. (242.3,26) .. controls (241.25,26) and (240.4,25.15) .. (240.4,24.1) -- cycle ;
\draw  [fill={rgb, 255:red, 0; green, 0; blue, 0 }  ,fill opacity=1 ] (260.4,42.1) .. controls (260.4,41.05) and (261.25,40.2) .. (262.3,40.2) .. controls (263.35,40.2) and (264.2,41.05) .. (264.2,42.1) .. controls (264.2,43.15) and (263.35,44) .. (262.3,44) .. controls (261.25,44) and (260.4,43.15) .. (260.4,42.1) -- cycle ;
\draw  [fill={rgb, 255:red, 0; green, 0; blue, 0 }  ,fill opacity=1 ] (230.6,72.3) .. controls (230.6,71.25) and (231.45,70.4) .. (232.5,70.4) .. controls (233.55,70.4) and (234.4,71.25) .. (234.4,72.3) .. controls (234.4,73.35) and (233.55,74.2) .. (232.5,74.2) .. controls (231.45,74.2) and (230.6,73.35) .. (230.6,72.3) -- cycle ;
\draw  [fill={rgb, 255:red, 0; green, 0; blue, 0 }  ,fill opacity=1 ] (270.4,71.7) .. controls (270.4,70.65) and (271.25,69.8) .. (272.3,69.8) .. controls (273.35,69.8) and (274.2,70.65) .. (274.2,71.7) .. controls (274.2,72.75) and (273.35,73.6) .. (272.3,73.6) .. controls (271.25,73.6) and (270.4,72.75) .. (270.4,71.7) -- cycle ;
\draw  [fill={rgb, 255:red, 0; green, 0; blue, 0 }  ,fill opacity=1 ] (261.2,62.5) .. controls (261.2,61.45) and (262.05,60.6) .. (263.1,60.6) .. controls (264.15,60.6) and (265,61.45) .. (265,62.5) .. controls (265,63.55) and (264.15,64.4) .. (263.1,64.4) .. controls (262.05,64.4) and (261.2,63.55) .. (261.2,62.5) -- cycle ;
\draw  [fill={rgb, 255:red, 0; green, 0; blue, 0 }  ,fill opacity=1 ] (250.2,53.3) .. controls (250.2,52.25) and (251.05,51.4) .. (252.1,51.4) .. controls (253.15,51.4) and (254,52.25) .. (254,53.3) .. controls (254,54.35) and (253.15,55.2) .. (252.1,55.2) .. controls (251.05,55.2) and (250.2,54.35) .. (250.2,53.3) -- cycle ;
\draw  [fill={rgb, 255:red, 0; green, 0; blue, 0 }  ,fill opacity=1 ] (270.2,14.3) .. controls (270.2,13.25) and (271.05,12.4) .. (272.1,12.4) .. controls (273.15,12.4) and (274,13.25) .. (274,14.3) .. controls (274,15.35) and (273.15,16.2) .. (272.1,16.2) .. controls (271.05,16.2) and (270.2,15.35) .. (270.2,14.3) -- cycle ;
\draw  [fill={rgb, 255:red, 0; green, 0; blue, 0 }  ,fill opacity=1 ] (230.4,14.1) .. controls (230.4,13.05) and (231.25,12.2) .. (232.3,12.2) .. controls (233.35,12.2) and (234.2,13.05) .. (234.2,14.1) .. controls (234.2,15.15) and (233.35,16) .. (232.3,16) .. controls (231.25,16) and (230.4,15.15) .. (230.4,14.1) -- cycle ;
\draw  [fill={rgb, 255:red, 0; green, 0; blue, 0 }  ,fill opacity=1 ] (200.2,3.3) .. controls (200.2,2.25) and (201.05,1.4) .. (202.1,1.4) .. controls (203.15,1.4) and (204,2.25) .. (204,3.3) .. controls (204,4.35) and (203.15,5.2) .. (202.1,5.2) .. controls (201.05,5.2) and (200.2,4.35) .. (200.2,3.3) -- cycle ;
\draw  [fill={rgb, 255:red, 0; green, 0; blue, 0 }  ,fill opacity=1 ] (240.4,41.1) .. controls (240.4,40.05) and (241.25,39.2) .. (242.3,39.2) .. controls (243.35,39.2) and (244.2,40.05) .. (244.2,41.1) .. controls (244.2,42.15) and (243.35,43) .. (242.3,43) .. controls (241.25,43) and (240.4,42.15) .. (240.4,41.1) -- cycle ;
\draw  [fill={rgb, 255:red, 0; green, 0; blue, 0 }  ,fill opacity=1 ] (250.4,34.1) .. controls (250.4,33.05) and (251.25,32.2) .. (252.3,32.2) .. controls (253.35,32.2) and (254.2,33.05) .. (254.2,34.1) .. controls (254.2,35.15) and (253.35,36) .. (252.3,36) .. controls (251.25,36) and (250.4,35.15) .. (250.4,34.1) -- cycle ;
\draw  [fill={rgb, 255:red, 0; green, 0; blue, 0 }  ,fill opacity=1 ] (465.6,75.1) .. controls (465.6,74.05) and (466.45,73.2) .. (467.5,73.2) .. controls (468.55,73.2) and (469.4,74.05) .. (469.4,75.1) .. controls (469.4,76.15) and (468.55,77) .. (467.5,77) .. controls (466.45,77) and (465.6,76.15) .. (465.6,75.1) -- cycle ;
\draw  [fill={rgb, 255:red, 0; green, 0; blue, 0 }  ,fill opacity=1 ] (484.8,74.9) .. controls (484.8,73.85) and (485.65,73) .. (486.7,73) .. controls (487.75,73) and (488.6,73.85) .. (488.6,74.9) .. controls (488.6,75.95) and (487.75,76.8) .. (486.7,76.8) .. controls (485.65,76.8) and (484.8,75.95) .. (484.8,74.9) -- cycle ;
\draw  [fill={rgb, 255:red, 0; green, 0; blue, 0 }  ,fill opacity=1 ] (200.2,160.5) .. controls (200.2,159.45) and (201.05,158.6) .. (202.1,158.6) .. controls (203.15,158.6) and (204,159.45) .. (204,160.5) .. controls (204,161.55) and (203.15,162.4) .. (202.1,162.4) .. controls (201.05,162.4) and (200.2,161.55) .. (200.2,160.5) -- cycle ;
\draw  [fill={rgb, 255:red, 0; green, 0; blue, 0 }  ,fill opacity=1 ] (220.2,160.5) .. controls (220.2,159.45) and (221.05,158.6) .. (222.1,158.6) .. controls (223.15,158.6) and (224,159.45) .. (224,160.5) .. controls (224,161.55) and (223.15,162.4) .. (222.1,162.4) .. controls (221.05,162.4) and (220.2,161.55) .. (220.2,160.5) -- cycle ;
\draw  [fill={rgb, 255:red, 0; green, 0; blue, 0 }  ,fill opacity=1 ] (220.2,82.5) .. controls (220.2,81.45) and (221.05,80.6) .. (222.1,80.6) .. controls (223.15,80.6) and (224,81.45) .. (224,82.5) .. controls (224,83.55) and (223.15,84.4) .. (222.1,84.4) .. controls (221.05,84.4) and (220.2,83.55) .. (220.2,82.5) -- cycle ;
\draw  [fill={rgb, 255:red, 0; green, 0; blue, 0 }  ,fill opacity=1 ] (200.2,82.5) .. controls (200.2,81.45) and (201.05,80.6) .. (202.1,80.6) .. controls (203.15,80.6) and (204,81.45) .. (204,82.5) .. controls (204,83.55) and (203.15,84.4) .. (202.1,84.4) .. controls (201.05,84.4) and (200.2,83.55) .. (200.2,82.5) -- cycle ;
\draw    (191.37,160.68) -- (441.87,160.77) ;
\draw    (192.1,150.5) .. controls (150.6,156.3) and (157.6,58.3) .. (191.9,72.1) ;
\draw    (202.1,5.2) -- (202.6,14.41) ;
\draw    (191.3,14.1) -- (202.6,14.41) ;
\draw    (202.6,14.41) -- (212,24) ;
\draw  [fill={rgb, 255:red, 0; green, 0; blue, 0 }  ,fill opacity=1 ] (200.2,14.3) .. controls (200.2,13.25) and (201.05,12.4) .. (202.1,12.4) .. controls (203.15,12.4) and (204,13.25) .. (204,14.3) .. controls (204,15.35) and (203.15,16.2) .. (202.1,16.2) .. controls (201.05,16.2) and (200.2,15.35) .. (200.2,14.3) -- cycle ;
\draw    (202.8,61.2) -- (202.3,51.99) ;
\draw    (213.6,52.3) -- (202.3,51.99) ;
\draw    (202.3,51.99) -- (192.9,42.4) ;
\draw  [fill={rgb, 255:red, 0; green, 0; blue, 0 }  ,fill opacity=1 ] (204.7,52.1) .. controls (204.7,53.15) and (203.85,54) .. (202.8,54) .. controls (201.75,54) and (200.9,53.15) .. (200.9,52.1) .. controls (200.9,51.05) and (201.75,50.2) .. (202.8,50.2) .. controls (203.85,50.2) and (204.7,51.05) .. (204.7,52.1) -- cycle ;
\draw    (222.9,23.8) -- (223.4,33.01) ;
\draw    (212.1,32.7) -- (223.4,33.01) ;
\draw    (223.4,33.01) -- (232.8,42.6) ;
\draw  [fill={rgb, 255:red, 0; green, 0; blue, 0 }  ,fill opacity=1 ] (221,32.9) .. controls (221,31.85) and (221.85,31) .. (222.9,31) .. controls (223.95,31) and (224.8,31.85) .. (224.8,32.9) .. controls (224.8,33.95) and (223.95,34.8) .. (222.9,34.8) .. controls (221.85,34.8) and (221,33.95) .. (221,32.9) -- cycle ;
\draw    (251.05,52.02) -- (241.85,52.55) ;
\draw    (242.12,41.25) -- (241.85,52.55) ;
\draw    (241.85,52.55) -- (232.28,61.98) ;
\draw  [fill={rgb, 255:red, 0; green, 0; blue, 0 }  ,fill opacity=1 ] (241.95,50.15) .. controls (242.99,50.15) and (243.85,50.99) .. (243.85,52.04) .. controls (243.85,53.09) and (243.01,53.95) .. (241.96,53.95) .. controls (240.91,53.95) and (240.05,53.1) .. (240.05,52.06) .. controls (240.05,51.01) and (240.9,50.15) .. (241.95,50.15) -- cycle ;
\draw    (253.09,34.34) -- (262.29,33.88) ;
\draw    (262.3,42.1) -- (262.29,33.88) ;
\draw    (262.29,33.88) -- (271.93,24.52) ;
\draw  [fill={rgb, 255:red, 0; green, 0; blue, 0 }  ,fill opacity=1 ] (262.18,36.28) .. controls (261.13,36.27) and (260.28,35.42) .. (260.29,34.37) .. controls (260.29,33.32) and (261.14,32.47) .. (262.19,32.48) .. controls (263.24,32.48) and (264.09,33.34) .. (264.09,34.38) .. controls (264.08,35.43) and (263.23,36.28) .. (262.18,36.28) -- cycle ;
\draw    (232.69,13.94) -- (241.89,13.48) ;
\draw    (241.54,24.78) -- (241.89,13.48) ;
\draw    (241.89,13.48) -- (251.53,4.12) ;
\draw  [fill={rgb, 255:red, 0; green, 0; blue, 0 }  ,fill opacity=1 ] (241.78,15.88) .. controls (240.73,15.87) and (239.88,15.02) .. (239.89,13.97) .. controls (239.89,12.92) and (240.74,12.07) .. (241.79,12.08) .. controls (242.84,12.08) and (243.69,12.94) .. (243.69,13.98) .. controls (243.68,15.03) and (242.83,15.88) .. (241.78,15.88) -- cycle ;
\draw    (272.65,71.17) -- (263.45,71.75) ;
\draw    (263.67,60.45) -- (263.45,71.75) ;
\draw    (263.45,71.75) -- (252.2,82.41) ;
\draw  [fill={rgb, 255:red, 0; green, 0; blue, 0 }  ,fill opacity=1 ] (263.54,69.35) .. controls (264.59,69.34) and (265.45,70.18) .. (265.45,71.23) .. controls (265.46,72.28) and (264.62,73.14) .. (263.57,73.15) .. controls (262.52,73.16) and (261.66,72.31) .. (261.65,71.26) .. controls (261.65,70.21) and (262.49,69.36) .. (263.54,69.35) -- cycle ;
\draw    (222.43,81.79) -- (221.98,72.58) ;
\draw    (233.28,72.95) -- (221.98,72.58) ;
\draw    (221.98,72.58) -- (212,62) ;
\draw  [fill={rgb, 255:red, 0; green, 0; blue, 0 }  ,fill opacity=1 ] (224.38,72.7) .. controls (224.37,73.75) and (223.52,74.6) .. (222.47,74.59) .. controls (221.42,74.59) and (220.57,73.73) .. (220.58,72.68) .. controls (220.59,71.63) and (221.44,70.79) .. (222.49,70.79) .. controls (223.54,70.8) and (224.39,71.65) .. (224.38,72.7) -- cycle ;
\draw  [draw opacity=0] (192,82.4) -- (262,82.4) -- (262,122.4) -- (192,122.4) -- cycle ; \draw   (192,82.4) -- (192,122.4)(212,82.4) -- (212,122.4)(232,82.4) -- (232,122.4)(252,82.4) -- (252,122.4) ; \draw   (192,82.4) -- (262,82.4)(192,102.4) -- (262,102.4) ; \draw    ;
\draw  [draw opacity=0] (192,120.4) -- (262,120.4) -- (262,160.4) -- (192,160.4) -- cycle ; \draw   (192,120.4) -- (192,160.4)(212,120.4) -- (212,160.4)(232,120.4) -- (232,160.4)(252,120.4) -- (252,160.4) ; \draw   (192,120.4) -- (262,120.4)(192,140.4) -- (262,140.4) ; \draw    ;
\draw  [draw opacity=0] (252.2,82.4) -- (322.2,82.4) -- (322.2,122.4) -- (252.2,122.4) -- cycle ; \draw   (252.2,82.4) -- (252.2,122.4)(272.2,82.4) -- (272.2,122.4)(292.2,82.4) -- (292.2,122.4)(312.2,82.4) -- (312.2,122.4) ; \draw   (252.2,82.4) -- (322.2,82.4)(252.2,102.4) -- (322.2,102.4) ; \draw    ;
\draw  [draw opacity=0] (252.2,120.4) -- (322.2,120.4) -- (322.2,160.4) -- (252.2,160.4) -- cycle ; \draw   (252.2,120.4) -- (252.2,160.4)(272.2,120.4) -- (272.2,160.4)(292.2,120.4) -- (292.2,160.4)(312.2,120.4) -- (312.2,160.4) ; \draw   (252.2,120.4) -- (322.2,120.4)(252.2,140.4) -- (322.2,140.4) ; \draw    ;
\draw  [draw opacity=0] (312.4,82.6) -- (382.4,82.6) -- (382.4,122.6) -- (312.4,122.6) -- cycle ; \draw   (312.4,82.6) -- (312.4,122.6)(332.4,82.6) -- (332.4,122.6)(352.4,82.6) -- (352.4,122.6)(372.4,82.6) -- (372.4,122.6) ; \draw   (312.4,82.6) -- (382.4,82.6)(312.4,102.6) -- (382.4,102.6) ; \draw    ;
\draw  [draw opacity=0] (312.4,120.6) -- (382.4,120.6) -- (382.4,160.6) -- (312.4,160.6) -- cycle ; \draw   (312.4,120.6) -- (312.4,160.6)(332.4,120.6) -- (332.4,160.6)(352.4,120.6) -- (352.4,160.6)(372.4,120.6) -- (372.4,160.6) ; \draw   (312.4,120.6) -- (382.4,120.6)(312.4,140.6) -- (382.4,140.6) ; \draw    ;
\draw  [draw opacity=0] (372.6,82.6) -- (442.6,82.6) -- (442.6,122.6) -- (372.6,122.6) -- cycle ; \draw   (372.6,82.6) -- (372.6,122.6)(392.6,82.6) -- (392.6,122.6)(412.6,82.6) -- (412.6,122.6)(432.6,82.6) -- (432.6,122.6) ; \draw   (372.6,82.6) -- (442.6,82.6)(372.6,102.6) -- (442.6,102.6) ; \draw    ;
\draw  [draw opacity=0] (372.6,120.6) -- (442.6,120.6) -- (442.6,160.6) -- (372.6,160.6) -- cycle ; \draw   (372.6,120.6) -- (372.6,160.6)(392.6,120.6) -- (392.6,160.6)(412.6,120.6) -- (412.6,160.6)(432.6,120.6) -- (432.6,160.6) ; \draw   (372.6,120.6) -- (442.6,120.6)(372.6,140.6) -- (442.6,140.6) ; \draw    ;
\draw  [fill={rgb, 255:red, 0; green, 0; blue, 0 }  ,fill opacity=1 ] (189.4,92.5) .. controls (189.4,91.45) and (190.25,90.6) .. (191.3,90.6) .. controls (192.35,90.6) and (193.2,91.45) .. (193.2,92.5) .. controls (193.2,93.55) and (192.35,94.4) .. (191.3,94.4) .. controls (190.25,94.4) and (189.4,93.55) .. (189.4,92.5) -- cycle ;
\draw  [fill={rgb, 255:red, 0; green, 0; blue, 0 }  ,fill opacity=1 ] (209.4,112.5) .. controls (209.4,111.45) and (210.25,110.6) .. (211.3,110.6) .. controls (212.35,110.6) and (213.2,111.45) .. (213.2,112.5) .. controls (213.2,113.55) and (212.35,114.4) .. (211.3,114.4) .. controls (210.25,114.4) and (209.4,113.55) .. (209.4,112.5) -- cycle ;
\draw  [fill={rgb, 255:red, 0; green, 0; blue, 0 }  ,fill opacity=1 ] (210.4,131.5) .. controls (210.4,130.45) and (211.25,129.6) .. (212.3,129.6) .. controls (213.35,129.6) and (214.2,130.45) .. (214.2,131.5) .. controls (214.2,132.55) and (213.35,133.4) .. (212.3,133.4) .. controls (211.25,133.4) and (210.4,132.55) .. (210.4,131.5) -- cycle ;
\draw  [fill={rgb, 255:red, 0; green, 0; blue, 0 }  ,fill opacity=1 ] (200.4,140.5) .. controls (200.4,139.45) and (201.25,138.6) .. (202.3,138.6) .. controls (203.35,138.6) and (204.2,139.45) .. (204.2,140.5) .. controls (204.2,141.55) and (203.35,142.4) .. (202.3,142.4) .. controls (201.25,142.4) and (200.4,141.55) .. (200.4,140.5) -- cycle ;
\draw  [fill={rgb, 255:red, 0; green, 0; blue, 0 }  ,fill opacity=1 ] (190.4,150.5) .. controls (190.4,149.45) and (191.25,148.6) .. (192.3,148.6) .. controls (193.35,148.6) and (194.2,149.45) .. (194.2,150.5) .. controls (194.2,151.55) and (193.35,152.4) .. (192.3,152.4) .. controls (191.25,152.4) and (190.4,151.55) .. (190.4,150.5) -- cycle ;
\draw  [fill={rgb, 255:red, 0; green, 0; blue, 0 }  ,fill opacity=1 ] (220.4,101.5) .. controls (220.4,100.45) and (221.25,99.6) .. (222.3,99.6) .. controls (223.35,99.6) and (224.2,100.45) .. (224.2,101.5) .. controls (224.2,102.55) and (223.35,103.4) .. (222.3,103.4) .. controls (221.25,103.4) and (220.4,102.55) .. (220.4,101.5) -- cycle ;
\draw  [fill={rgb, 255:red, 0; green, 0; blue, 0 }  ,fill opacity=1 ] (240.4,102.5) .. controls (240.4,101.45) and (241.25,100.6) .. (242.3,100.6) .. controls (243.35,100.6) and (244.2,101.45) .. (244.2,102.5) .. controls (244.2,103.55) and (243.35,104.4) .. (242.3,104.4) .. controls (241.25,104.4) and (240.4,103.55) .. (240.4,102.5) -- cycle ;
\draw  [fill={rgb, 255:red, 0; green, 0; blue, 0 }  ,fill opacity=1 ] (260.4,120.5) .. controls (260.4,119.45) and (261.25,118.6) .. (262.3,118.6) .. controls (263.35,118.6) and (264.2,119.45) .. (264.2,120.5) .. controls (264.2,121.55) and (263.35,122.4) .. (262.3,122.4) .. controls (261.25,122.4) and (260.4,121.55) .. (260.4,120.5) -- cycle ;
\draw  [fill={rgb, 255:red, 0; green, 0; blue, 0 }  ,fill opacity=1 ] (230.6,150.7) .. controls (230.6,149.65) and (231.45,148.8) .. (232.5,148.8) .. controls (233.55,148.8) and (234.4,149.65) .. (234.4,150.7) .. controls (234.4,151.75) and (233.55,152.6) .. (232.5,152.6) .. controls (231.45,152.6) and (230.6,151.75) .. (230.6,150.7) -- cycle ;
\draw  [fill={rgb, 255:red, 0; green, 0; blue, 0 }  ,fill opacity=1 ] (270.4,150.1) .. controls (270.4,149.05) and (271.25,148.2) .. (272.3,148.2) .. controls (273.35,148.2) and (274.2,149.05) .. (274.2,150.1) .. controls (274.2,151.15) and (273.35,152) .. (272.3,152) .. controls (271.25,152) and (270.4,151.15) .. (270.4,150.1) -- cycle ;
\draw  [fill={rgb, 255:red, 0; green, 0; blue, 0 }  ,fill opacity=1 ] (261.2,140.9) .. controls (261.2,139.85) and (262.05,139) .. (263.1,139) .. controls (264.15,139) and (265,139.85) .. (265,140.9) .. controls (265,141.95) and (264.15,142.8) .. (263.1,142.8) .. controls (262.05,142.8) and (261.2,141.95) .. (261.2,140.9) -- cycle ;
\draw  [fill={rgb, 255:red, 0; green, 0; blue, 0 }  ,fill opacity=1 ] (250.2,131.7) .. controls (250.2,130.65) and (251.05,129.8) .. (252.1,129.8) .. controls (253.15,129.8) and (254,130.65) .. (254,131.7) .. controls (254,132.75) and (253.15,133.6) .. (252.1,133.6) .. controls (251.05,133.6) and (250.2,132.75) .. (250.2,131.7) -- cycle ;
\draw  [fill={rgb, 255:red, 0; green, 0; blue, 0 }  ,fill opacity=1 ] (270.2,92.7) .. controls (270.2,91.65) and (271.05,90.8) .. (272.1,90.8) .. controls (273.15,90.8) and (274,91.65) .. (274,92.7) .. controls (274,93.75) and (273.15,94.6) .. (272.1,94.6) .. controls (271.05,94.6) and (270.2,93.75) .. (270.2,92.7) -- cycle ;
\draw  [fill={rgb, 255:red, 0; green, 0; blue, 0 }  ,fill opacity=1 ] (230.4,92.5) .. controls (230.4,91.45) and (231.25,90.6) .. (232.3,90.6) .. controls (233.35,90.6) and (234.2,91.45) .. (234.2,92.5) .. controls (234.2,93.55) and (233.35,94.4) .. (232.3,94.4) .. controls (231.25,94.4) and (230.4,93.55) .. (230.4,92.5) -- cycle ;
\draw  [fill={rgb, 255:red, 0; green, 0; blue, 0 }  ,fill opacity=1 ] (240.4,119.5) .. controls (240.4,118.45) and (241.25,117.6) .. (242.3,117.6) .. controls (243.35,117.6) and (244.2,118.45) .. (244.2,119.5) .. controls (244.2,120.55) and (243.35,121.4) .. (242.3,121.4) .. controls (241.25,121.4) and (240.4,120.55) .. (240.4,119.5) -- cycle ;
\draw  [fill={rgb, 255:red, 0; green, 0; blue, 0 }  ,fill opacity=1 ] (250.4,112.5) .. controls (250.4,111.45) and (251.25,110.6) .. (252.3,110.6) .. controls (253.35,110.6) and (254.2,111.45) .. (254.2,112.5) .. controls (254.2,113.55) and (253.35,114.4) .. (252.3,114.4) .. controls (251.25,114.4) and (250.4,113.55) .. (250.4,112.5) -- cycle ;
\draw  [fill={rgb, 255:red, 0; green, 0; blue, 0 }  ,fill opacity=1 ] (220.2,160.9) .. controls (220.2,159.85) and (221.05,159) .. (222.1,159) .. controls (223.15,159) and (224,159.85) .. (224,160.9) .. controls (224,161.95) and (223.15,162.8) .. (222.1,162.8) .. controls (221.05,162.8) and (220.2,161.95) .. (220.2,160.9) -- cycle ;
\draw  [fill={rgb, 255:red, 0; green, 0; blue, 0 }  ,fill opacity=1 ] (200.2,160.9) .. controls (200.2,159.85) and (201.05,159) .. (202.1,159) .. controls (203.15,159) and (204,159.85) .. (204,160.9) .. controls (204,161.95) and (203.15,162.8) .. (202.1,162.8) .. controls (201.05,162.8) and (200.2,161.95) .. (200.2,160.9) -- cycle ;
\draw    (202.1,83.6) -- (202.6,92.81) ;
\draw    (191.3,92.5) -- (202.6,92.81) ;
\draw    (202.6,92.81) -- (212,102.4) ;
\draw  [fill={rgb, 255:red, 0; green, 0; blue, 0 }  ,fill opacity=1 ] (200.2,92.7) .. controls (200.2,91.65) and (201.05,90.8) .. (202.1,90.8) .. controls (203.15,90.8) and (204,91.65) .. (204,92.7) .. controls (204,93.75) and (203.15,94.6) .. (202.1,94.6) .. controls (201.05,94.6) and (200.2,93.75) .. (200.2,92.7) -- cycle ;
\draw    (202.8,139.6) -- (202.3,130.39) ;
\draw    (213.6,130.7) -- (202.3,130.39) ;
\draw    (202.3,130.39) -- (192.9,120.8) ;
\draw  [fill={rgb, 255:red, 0; green, 0; blue, 0 }  ,fill opacity=1 ] (204.7,130.5) .. controls (204.7,131.55) and (203.85,132.4) .. (202.8,132.4) .. controls (201.75,132.4) and (200.9,131.55) .. (200.9,130.5) .. controls (200.9,129.45) and (201.75,128.6) .. (202.8,128.6) .. controls (203.85,128.6) and (204.7,129.45) .. (204.7,130.5) -- cycle ;
\draw    (222.9,102.2) -- (223.4,111.41) ;
\draw    (212.1,111.1) -- (223.4,111.41) ;
\draw    (223.4,111.41) -- (232.8,121) ;
\draw  [fill={rgb, 255:red, 0; green, 0; blue, 0 }  ,fill opacity=1 ] (221,111.3) .. controls (221,110.25) and (221.85,109.4) .. (222.9,109.4) .. controls (223.95,109.4) and (224.8,110.25) .. (224.8,111.3) .. controls (224.8,112.35) and (223.95,113.2) .. (222.9,113.2) .. controls (221.85,113.2) and (221,112.35) .. (221,111.3) -- cycle ;
\draw    (251.05,130.42) -- (241.85,130.95) ;
\draw    (242.12,119.65) -- (241.85,130.95) ;
\draw    (241.85,130.95) -- (232.28,140.38) ;
\draw  [fill={rgb, 255:red, 0; green, 0; blue, 0 }  ,fill opacity=1 ] (241.95,128.55) .. controls (242.99,128.55) and (243.85,129.39) .. (243.85,130.44) .. controls (243.85,131.49) and (243.01,132.35) .. (241.96,132.35) .. controls (240.91,132.35) and (240.05,131.5) .. (240.05,130.46) .. controls (240.05,129.41) and (240.9,128.55) .. (241.95,128.55) -- cycle ;
\draw    (253.09,112.74) -- (262.29,112.28) ;
\draw    (262.3,120.5) -- (262.29,112.28) ;
\draw    (262.29,112.28) -- (271.93,102.92) ;
\draw  [fill={rgb, 255:red, 0; green, 0; blue, 0 }  ,fill opacity=1 ] (262.18,114.68) .. controls (261.13,114.67) and (260.28,113.82) .. (260.29,112.77) .. controls (260.29,111.72) and (261.14,110.87) .. (262.19,110.88) .. controls (263.24,110.88) and (264.09,111.74) .. (264.09,112.78) .. controls (264.08,113.83) and (263.23,114.68) .. (262.18,114.68) -- cycle ;
\draw    (232.69,92.34) -- (241.89,91.88) ;
\draw    (241.54,103.18) -- (241.89,91.88) ;
\draw    (241.89,91.88) -- (251.53,82.52) ;
\draw  [fill={rgb, 255:red, 0; green, 0; blue, 0 }  ,fill opacity=1 ] (241.78,94.28) .. controls (240.73,94.27) and (239.88,93.42) .. (239.89,92.37) .. controls (239.89,91.32) and (240.74,90.47) .. (241.79,90.48) .. controls (242.84,90.48) and (243.69,91.34) .. (243.69,92.38) .. controls (243.68,93.43) and (242.83,94.28) .. (241.78,94.28) -- cycle ;
\draw    (272.65,149.57) -- (263.45,150.15) ;
\draw    (263.67,138.85) -- (263.45,150.15) ;
\draw    (263.45,150.15) -- (252.2,160.81) ;
\draw  [fill={rgb, 255:red, 0; green, 0; blue, 0 }  ,fill opacity=1 ] (263.54,147.75) .. controls (264.59,147.74) and (265.45,148.58) .. (265.45,149.63) .. controls (265.46,150.68) and (264.62,151.54) .. (263.57,151.55) .. controls (262.52,151.56) and (261.66,150.71) .. (261.65,149.66) .. controls (261.65,148.61) and (262.49,147.76) .. (263.54,147.75) -- cycle ;
\draw    (222.43,160.19) -- (221.98,150.98) ;
\draw    (233.28,151.35) -- (221.98,150.98) ;
\draw    (221.98,150.98) -- (212,140.4) ;
\draw  [fill={rgb, 255:red, 0; green, 0; blue, 0 }  ,fill opacity=1 ] (224.38,151.1) .. controls (224.37,152.15) and (223.52,153) .. (222.47,152.99) .. controls (221.42,152.99) and (220.57,152.13) .. (220.58,151.08) .. controls (220.59,150.03) and (221.44,149.19) .. (222.49,149.19) .. controls (223.54,149.2) and (224.39,150.05) .. (224.38,151.1) -- cycle ;

\draw (170.4,102.61) node [anchor=north west][inner sep=0.75pt]    {\Large $C_{7}$};

\end{tikzpicture}

        \caption{A construction of $C_8$-free triangle-free plane graphs}
        \label{fig:triC8Construction}
    \end{figure}
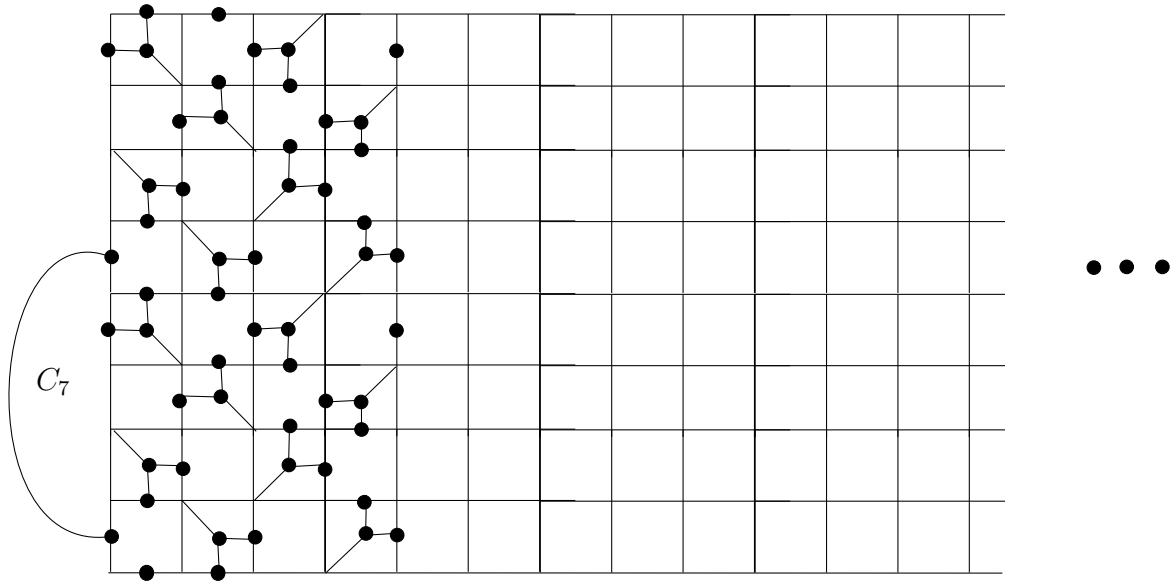
    
    In the construction, the top row of vertices are correspondingly identified with the bottom row of vertices. There are $20t+12$ vertices and $36t+14$ vertices in the construction, where $4t$ is the number of columns for any positive integer $t$. These satisfy that $e_G=9n/5-38/5$.

\subsubsection{Preparatory propositions}

\begin{proposition}
A quadrangular block in a $C_8$-free triangle-free plane graph contains at most $7$ vertices.
\end{proposition}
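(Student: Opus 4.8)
The plan is to run essentially the argument used for Proposition~\ref{pro:biC8Block}, replacing ``bipartite'' by ``triangle-free'' — which is all that is really used there, namely to exclude $K_4$ and hence a $C_3$ — and reading the statement with the standing hypothesis $\delta(G)\ge 3$ of this section in force. (This hypothesis is genuinely needed: without it one can fold extra $4$-faces onto a $K_{2,3}$ until two former boundary vertices become trapped carrying only their two block-edges, producing a triangle-free $C_8$-free plane graph whose whole edge set is a single $8$-vertex quadrangular block.) So assume $\delta(G)\ge 3$.

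A quadrangular block is produced by the algorithm of Definition~\ref{def:quadrablock} by repeatedly gluing bounded $4$-faces along edges, so first I would settle the configurations with few $4$-faces: a single $4$-face has $4$ vertices; two $4$-faces sharing exactly one edge form a hexagon with a long chord ($6$ vertices); two sharing a path of two edges form $K_{2,3}$ ($5$ vertices); and two sharing a pair of opposite edges would force $K_4$, impossible. Hence a block $B$ with at least $8$ vertices contains at least three $4$-faces. I would then run the generating algorithm and take $B'$ to be the block one step before its vertex count first reaches $8$; since a bounded $4$-face adds at most two new vertices, $B'$ has $6$ or $7$ vertices and is obtained by gluing a third $4$-face onto the hexagon-with-chord or onto $K_{2,3}$. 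Planarity, triangle-freeness, and (again) $\delta(G)\ge 3$ — which forbids any vertex trapped with only two block-edges — cut this down to a short explicit list of such $B'$, among them the wheel-type $7$-vertex block $Q_7$ of Figure~\ref{fig:quadraBlocksBiCEight}.

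For each candidate $B'$ I would attach one further bounded $4$-face in every admissible way — along a single exterior edge (two new vertices), along two consecutive exterior edges (one new vertex), or along exterior edges while reusing existing boundary vertices — and verify that in each case the resulting configuration either traces a cycle of length exactly $8$ along its new outer boundary (a $C_8$, contradicting $C_8$-freeness), or traps some vertex with only its two block-edges (a degree-$2$ vertex, contradicting $\delta(G)\ge 3$), or creates a triangle. This shows that no block can reach $8$ vertices, so every quadrangular block has at most $7$ vertices; running the same bookkeeping without forcing a contradiction simultaneously records the complete list of $7$-vertex (and smaller) blocks needed for the subsequent contribution analysis.

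The step I expect to be the main obstacle is this final case analysis. One must make sure the enumeration of both the intermediate blocks $B'$ and the ways to glue on a $4$-face is exhaustive; the genuinely delicate point is to check that the closed walk one finds on the new outer boundary is an honest $8$-cycle, not a walk that revisits a vertex. That is where planarity enters, together with the vertex count: the boundary of the outer face of a $2$-connected plane graph is a cycle, and each post-attachment configuration has exactly $8$ vertices on it. A tidier but equivalent formulation would be to prove directly that a quadrangular block with at least three $4$-faces that is not one of the listed $7$-vertex exceptions has outer boundary a cycle of length exactly $8$ — but pinning down ``exceptions'' costs exactly the same work.
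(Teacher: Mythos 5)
Your proposal is essentially the paper's own argument: the paper simply says the proof is ``exactly the same as the proof of Proposition~\ref{pro:biC8Block}'', i.e.\ grow the block from two adjacent $4$-faces (necessarily sharing one edge under $\delta(G)\ge 3$) and check that gluing further $4$-faces beyond the $Q_7$ configuration forces a $C_8$, a degree-$2$ vertex, or here a triangle, which is precisely your case analysis. You are also right that the statement must be read with the section's standing hypothesis $\delta(G)\ge 3$ (e.g.\ $K_{2,6}$ is a triangle-free, $C_8$-free plane graph forming a single $8$-vertex quadrangular block), a point the paper leaves implicit by citing Proposition~\ref{pro:biC8Block}.
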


Proof of this proposition is exactly the same as the proof of Proposition \ref{pro:biC8Block}.


Now we describe all the possible quadrangular blocks in $G$. As shown in Figure \ref{fig:quadraBlocksTriCEight}, other than the trivial quadrangular block $K_2$, there is one $4$-vertex quadrangular block $C_4$, one $6$-vertex quadrangular block $\Theta_6$,  and one $7$-vertex quadrangular block that is denoted by $Q_7$.

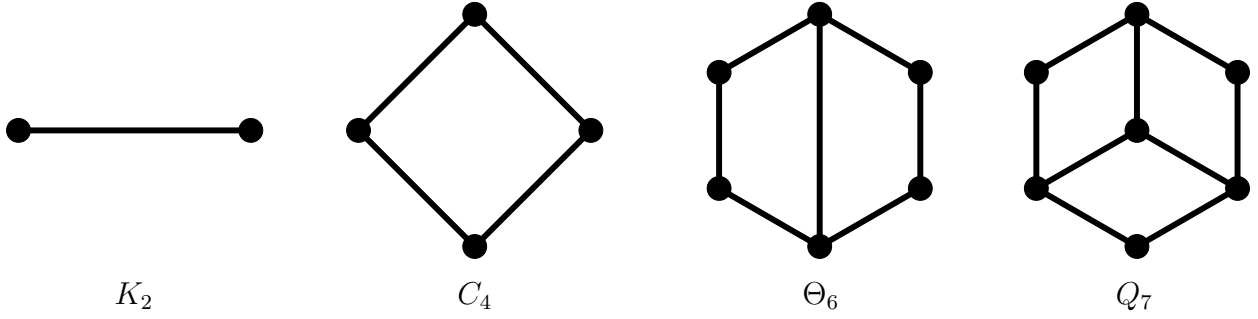
\begin{figure}[H]
    \centering
\begin{tikzpicture}[x=0.75pt,y=0.75pt,yscale=-1,xscale=1]

\draw [line width=2.25]    (431.91,84.91) -- (431.91,202.09) ;
\draw  [fill={rgb, 255:red, 0; green, 0; blue, 0 }  ,fill opacity=1 ] (585.91,143.5) .. controls (585.91,140.19) and (588.6,137.5) .. (591.91,137.5) .. controls (595.23,137.5) and (597.91,140.19) .. (597.91,143.5) .. controls (597.91,146.81) and (595.23,149.5) .. (591.91,149.5) .. controls (588.6,149.5) and (585.91,146.81) .. (585.91,143.5) -- cycle ;
\draw  [fill={rgb, 255:red, 0; green, 0; blue, 0 }  ,fill opacity=1 ] (375.17,114.21) .. controls (375.17,110.89) and (377.86,108.21) .. (381.17,108.21) .. controls (384.49,108.21) and (387.17,110.89) .. (387.17,114.21) .. controls (387.17,117.52) and (384.49,120.21) .. (381.17,120.21) .. controls (377.86,120.21) and (375.17,117.52) .. (375.17,114.21) -- cycle ;
\draw  [fill={rgb, 255:red, 0; green, 0; blue, 0 }  ,fill opacity=1 ] (476.65,172.79) .. controls (476.65,169.48) and (479.34,166.79) .. (482.65,166.79) .. controls (485.96,166.79) and (488.65,169.48) .. (488.65,172.79) .. controls (488.65,176.11) and (485.96,178.79) .. (482.65,178.79) .. controls (479.34,178.79) and (476.65,176.11) .. (476.65,172.79) -- cycle ;
\draw  [fill={rgb, 255:red, 0; green, 0; blue, 0 }  ,fill opacity=1 ] (476.65,114.21) .. controls (476.65,110.89) and (479.34,108.21) .. (482.65,108.21) .. controls (485.96,108.21) and (488.65,110.89) .. (488.65,114.21) .. controls (488.65,117.52) and (485.96,120.21) .. (482.65,120.21) .. controls (479.34,120.21) and (476.65,117.52) .. (476.65,114.21) -- cycle ;
\draw  [fill={rgb, 255:red, 0; green, 0; blue, 0 }  ,fill opacity=1 ] (425.91,84.91) .. controls (425.91,81.6) and (428.6,78.91) .. (431.91,78.91) .. controls (435.23,78.91) and (437.91,81.6) .. (437.91,84.91) .. controls (437.91,88.23) and (435.23,90.91) .. (431.91,90.91) .. controls (428.6,90.91) and (425.91,88.23) .. (425.91,84.91) -- cycle ;
\draw [line width=2.25]    (27.82,143.5) -- (145,143.5) ;
\draw  [fill={rgb, 255:red, 0; green, 0; blue, 0 }  ,fill opacity=1 ] (21.82,143.5) .. controls (21.82,140.19) and (24.51,137.5) .. (27.82,137.5) .. controls (31.14,137.5) and (33.82,140.19) .. (33.82,143.5) .. controls (33.82,146.81) and (31.14,149.5) .. (27.82,149.5) .. controls (24.51,149.5) and (21.82,146.81) .. (21.82,143.5) -- cycle ;
\draw  [fill={rgb, 255:red, 0; green, 0; blue, 0 }  ,fill opacity=1 ] (139,143.5) .. controls (139,140.19) and (141.69,137.5) .. (145,137.5) .. controls (148.31,137.5) and (151,140.19) .. (151,143.5) .. controls (151,146.81) and (148.31,149.5) .. (145,149.5) .. controls (141.69,149.5) and (139,146.81) .. (139,143.5) -- cycle ;
\draw  [line width=2.25]  (431.91,202.09) -- (381.17,172.79) -- (381.17,114.21) -- (431.91,84.91) -- (482.65,114.21) -- (482.65,172.79) -- cycle ;
\draw  [fill={rgb, 255:red, 0; green, 0; blue, 0 }  ,fill opacity=1 ] (375.17,172.79) .. controls (375.17,169.48) and (377.86,166.79) .. (381.17,166.79) .. controls (384.49,166.79) and (387.17,169.48) .. (387.17,172.79) .. controls (387.17,176.11) and (384.49,178.79) .. (381.17,178.79) .. controls (377.86,178.79) and (375.17,176.11) .. (375.17,172.79) -- cycle ;
\draw  [fill={rgb, 255:red, 0; green, 0; blue, 0 }  ,fill opacity=1 ] (425.91,202.09) .. controls (425.91,198.77) and (428.6,196.09) .. (431.91,196.09) .. controls (435.23,196.09) and (437.91,198.77) .. (437.91,202.09) .. controls (437.91,205.4) and (435.23,208.09) .. (431.91,208.09) .. controls (428.6,208.09) and (425.91,205.4) .. (425.91,202.09) -- cycle ;
\draw  [fill={rgb, 255:red, 0; green, 0; blue, 0 }  ,fill opacity=1 ] (535.17,114.21) .. controls (535.17,110.89) and (537.86,108.21) .. (541.17,108.21) .. controls (544.49,108.21) and (547.17,110.89) .. (547.17,114.21) .. controls (547.17,117.52) and (544.49,120.21) .. (541.17,120.21) .. controls (537.86,120.21) and (535.17,117.52) .. (535.17,114.21) -- cycle ;
\draw  [fill={rgb, 255:red, 0; green, 0; blue, 0 }  ,fill opacity=1 ] (636.65,172.79) .. controls (636.65,169.48) and (639.34,166.79) .. (642.65,166.79) .. controls (645.96,166.79) and (648.65,169.48) .. (648.65,172.79) .. controls (648.65,176.11) and (645.96,178.79) .. (642.65,178.79) .. controls (639.34,178.79) and (636.65,176.11) .. (636.65,172.79) -- cycle ;
\draw  [fill={rgb, 255:red, 0; green, 0; blue, 0 }  ,fill opacity=1 ] (636.65,114.21) .. controls (636.65,110.89) and (639.34,108.21) .. (642.65,108.21) .. controls (645.96,108.21) and (648.65,110.89) .. (648.65,114.21) .. controls (648.65,117.52) and (645.96,120.21) .. (642.65,120.21) .. controls (639.34,120.21) and (636.65,117.52) .. (636.65,114.21) -- cycle ;
\draw  [fill={rgb, 255:red, 0; green, 0; blue, 0 }  ,fill opacity=1 ] (585.91,84.91) .. controls (585.91,81.6) and (588.6,78.91) .. (591.91,78.91) .. controls (595.23,78.91) and (597.91,81.6) .. (597.91,84.91) .. controls (597.91,88.23) and (595.23,90.91) .. (591.91,90.91) .. controls (588.6,90.91) and (585.91,88.23) .. (585.91,84.91) -- cycle ;
\draw  [line width=2.25]  (591.91,202.09) -- (541.17,172.79) -- (541.17,114.21) -- (591.91,84.91) -- (642.65,114.21) -- (642.65,172.79) -- cycle ;
\draw  [fill={rgb, 255:red, 0; green, 0; blue, 0 }  ,fill opacity=1 ] (535.17,172.79) .. controls (535.17,169.48) and (537.86,166.79) .. (541.17,166.79) .. controls (544.49,166.79) and (547.17,169.48) .. (547.17,172.79) .. controls (547.17,176.11) and (544.49,178.79) .. (541.17,178.79) .. controls (537.86,178.79) and (535.17,176.11) .. (535.17,172.79) -- cycle ;
\draw  [fill={rgb, 255:red, 0; green, 0; blue, 0 }  ,fill opacity=1 ] (585.91,202.09) .. controls (585.91,198.77) and (588.6,196.09) .. (591.91,196.09) .. controls (595.23,196.09) and (597.91,198.77) .. (597.91,202.09) .. controls (597.91,205.4) and (595.23,208.09) .. (591.91,208.09) .. controls (588.6,208.09) and (585.91,205.4) .. (585.91,202.09) -- cycle ;
\draw [line width=2.25]    (591.91,84.91) -- (591.91,143.5) ;
\draw [line width=2.25]    (591.91,143.5) -- (541.17,172.79) ;
\draw [line width=2.25]    (591.91,143.5) -- (642.65,172.79) ;
\draw  [line width=2.25]  (257.91,84.91) -- (316.5,143.5) -- (257.91,202.09) -- (199.32,143.5) -- cycle ;
\draw  [fill={rgb, 255:red, 0; green, 0; blue, 0 }  ,fill opacity=1 ] (193.32,143.5) .. controls (193.32,140.19) and (196.01,137.5) .. (199.32,137.5) .. controls (202.64,137.5) and (205.32,140.19) .. (205.32,143.5) .. controls (205.32,146.81) and (202.64,149.5) .. (199.32,149.5) .. controls (196.01,149.5) and (193.32,146.81) .. (193.32,143.5) -- cycle ;
\draw  [fill={rgb, 255:red, 0; green, 0; blue, 0 }  ,fill opacity=1 ] (310.5,143.5) .. controls (310.5,140.19) and (313.19,137.5) .. (316.5,137.5) .. controls (319.81,137.5) and (322.5,140.19) .. (322.5,143.5) .. controls (322.5,146.81) and (319.81,149.5) .. (316.5,149.5) .. controls (313.19,149.5) and (310.5,146.81) .. (310.5,143.5) -- cycle ;
\draw  [fill={rgb, 255:red, 0; green, 0; blue, 0 }  ,fill opacity=1 ] (251.91,202.09) .. controls (251.91,198.77) and (254.6,196.09) .. (257.91,196.09) .. controls (261.23,196.09) and (263.91,198.77) .. (263.91,202.09) .. controls (263.91,205.4) and (261.23,208.09) .. (257.91,208.09) .. controls (254.6,208.09) and (251.91,205.4) .. (251.91,202.09) -- cycle ;
\draw  [fill={rgb, 255:red, 0; green, 0; blue, 0 }  ,fill opacity=1 ] (251.91,84.91) .. controls (251.91,81.6) and (254.6,78.91) .. (257.91,78.91) .. controls (261.23,78.91) and (263.91,81.6) .. (263.91,84.91) .. controls (263.91,88.23) and (261.23,90.91) .. (257.91,90.91) .. controls (254.6,90.91) and (251.91,88.23) .. (251.91,84.91) -- cycle ;

\draw (75,219) node [anchor=north west][inner sep=0.75pt]  [font=\Large] [align=left] {$K_2$};
\draw (248,219) node [anchor=north west][inner sep=0.75pt]  [font=\Large] [align=left] {$C_4$};
\draw (422,219) node [anchor=north west][inner sep=0.75pt]  [font=\Large] [align=left] {$\Theta_6$};
\draw (580,219) node [anchor=north west][inner sep=0.75pt]  [font=\Large] [align=left] {$Q_7$};
\end{tikzpicture}
    \caption{All the possible quadrangular blocks in $G$.}
    \label{fig:quadraBlocksTriCEight}
\end{figure}

The following proposition is useful in the proof.
\begin{proposition}
\label{pro:triC8}
Let $G$ be a $C_8$-free triangle-free plane graph on $n$ vertices with $\delta(G)\geq 3$.
\begin{enumerate}
    \item If $B$ is a nontrivial quadrangular block, then none of the exterior faces can have length $6$.
    \item If $B$ is $\Theta_6$ or $Q_7$, then at most one of its exterior faces can be a $5$-face sharing exactly two consecutive edges with $B$.
    \item No two $5$-faces can be adjacent.
\end{enumerate}
\end{proposition}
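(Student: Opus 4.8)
\textbf{A plan for Proposition~\ref{pro:triC8}.} All three parts reduce to a single move: take the exterior face $f$ under consideration, glue it along one or more shared edges either to an interior $4$-face of $B$ or to a second small exterior face, and read the boundary of the resulting region as a cycle whose length is $8$ (forbidden) or $3$ (forbidden, as $G$ is triangle-free). Two structural facts carry most of the load: (i) every boundary edge of a nontrivial block lies on an interior $4$-face, and the boundary edges of a fixed interior $4$-face form a consecutive arc of the boundary cycle of $B$, as one sees from the shapes $C_4,\Theta_6,Q_7$ in Figure~\ref{fig:quadraBlocksTriCEight}; and (ii) since $\delta(G)\ge 3$, a vertex of degree $2$ in $B$ must be a junction vertex, so the angle at that vertex between its two $B$-edges is occupied by the interior $4$-face and by no exterior face — in particular, no exterior face contains both of those edges. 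Whenever a glued closed walk fails to be a simple cycle, the repeated vertex lies off the glued edge, and splitting the walk there exhibits a triangle or collapses the length count; these degenerate identifications are handled just as in the proof of Proposition~\ref{pro:biC8}, and I will not belabour them below.

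For part (1): an exterior face is never a $4$-face by Definition~\ref{def:quadrablock}. Suppose $f$ is an exterior face of a nontrivial block $B$ with $l(f)=6$, and let $P$ be a maximal arc of the boundary of $f$ all of whose edges lie on the boundary of $B$; set $k=|P|\ge 1$. If $k=1$, say $P=\{e\}$ with $e$ on the interior $4$-face $Q$, then $f$ and $Q$ share only $e$, so the symmetric difference of their boundaries is a cycle of length $6+4-2=8$, a contradiction. If $k\ge 2$, the common vertex of two consecutive edges of $P$ must, by fact~(ii), have degree $3$ in $B$; but the degree-$3$ vertices of $\Theta_6$ and $Q_7$ are non-consecutive on the hexagonal boundary, so an arc of length $\ge 3$ would contain a degree-$2$ internal vertex carrying both its $B$-edges on $f$ — contradicting fact~(ii). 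Hence $k=2$, the two edges of $P$ lie on two distinct interior $4$-faces, and gluing $f$ to one of them along the shared edge again gives an $8$-cycle. Finally, if $f$ meets the boundary of $B$ in two or more disjoint arcs (e.g.\ two opposite edges of a $C_4$), I split the boundary of $f$ at those arcs and combine the pieces with the complementary edges of the relevant interior $4$-face: this yields two internally disjoint pairs of parallel paths of total length at most $6$, yet each pair closes to a cycle of length $\ge 4$ by triangle-freeness, and $4+4>6$ is absurd. So $l(f)\ne 6$.

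For part (2), suppose $B\in\{\Theta_6,Q_7\}$ has distinct exterior faces $f_1\ne f_2$, each a $5$-face sharing exactly two consecutive edges with $B$. By fact~(ii) those two edges of each $f_i$ meet at a degree-$3$ vertex of $B$ and hence lie on two distinct interior $4$-faces, and the three remaining edges of $f_i$ form a path — its outer path — joining the two neighbours along the boundary of $B$ of that vertex, i.e.\ two vertices at distance $2$ on the hexagon. Since $f_1$ and $f_2$ cannot share a boundary edge of $B$ (that would force $f_1=f_2$), only a handful of positions for the two outer paths on the hexagon occur, and in each one concatenating the two outer paths — either because they already share an endpoint, or bridged by the one or two leftover boundary edges of $B$ — produces a cycle of length $3+3+2$ or $3+1+3+1$, that is $8$, a contradiction. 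For part (3): if $5$-faces $f_1\ne f_2$ shared two consecutive edges, the common vertex would be met twice by one face, impossible in a $2$-connected plane graph with $\delta(G)\ge 3$; if they shared two disjoint edges, the two pairs of parallel paths they create would have total length $5+5-4=6$ while each pair closes to a triangle-free cycle of length $\ge 4$, again absurd; hence $f_1$ and $f_2$ share exactly one edge $e$, and gluing along $e$ gives an $8$-cycle.

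The conceptual content — gluing two faces along an edge loses $2$ from the combined length — is uniform across the three parts, so I expect the real work to be the bookkeeping in (1) and (2): enumerating how an exterior face can wrap around the at-most-hexagonal boundary of $B$, verifying that the extracted cycle has length exactly $8$ in every configuration, and confirming that $\delta(G)\ge 3$ together with the explicit shapes of $C_4$, $\Theta_6$, $Q_7$ closes off every loophole through which a shorter, permitted cycle might otherwise appear.
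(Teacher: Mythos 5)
Your plan is correct and follows essentially the same route as the paper's proof: in each part you glue the offending $5$- or $6$-face to an adjacent interior $4$-face of the block (or to the other small face, or to the boundary of $B$ itself) and read off a forbidden $C_8$, triangle, multiple edge, or degree-$2$ vertex, using $\delta(G)\ge 3$ and the explicit shapes $C_4$, $\Theta_6$, $Q_7$. Your write-up is simply a more detailed bookkeeping (maximal shared arcs, positions on the hexagon) of the paper's three-sentence argument, with the degenerate identifications treated at the same level of informality as in the paper.
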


\begin{proof}~
\begin{enumerate}
    \item Suppose one exterior face $f$ of a nontrivial quadrangular block $B$ is of length $6$, then either together with its adjacent interior $4$-face of $B$, we get a cycle of length $8$, or we get a degree $2$ vertex, or else we have a triangle in the graph, contradiction.
    \item If there are two exterior $5$-faces $f_1,f_2$ of $B$, each sharing exactly two consecutive edges with $B$, then the boundary of $f_1\cup B\cup f_2$ forms a $C_8$.
    \item If two $5$-faces share exactly one edge, they would form a $C_8$, contradiction. If they share more than one common edge, then they will create a multiple edge, or a triangle, or a degree $2$ vertex, hence contradiction.
\end{enumerate}
\end{proof}

\subsubsection{Proof of Theorem \ref{thm:triC8}}~

Our main goal is to show that
\begin{lemma}
Let $G$ be a $C_8$-free triangle-free plane graph on $n$ vertices with $\delta(G)\geq 3$. Any quadrangular block $B$ in $G$ satisfies $$
24v(B)-61e(B)+105f(B)\le 0.
$$.
\end{lemma}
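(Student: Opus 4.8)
The plan is to prove the lemma by a case analysis on the isomorphism type of the quadrangular block $B$, just as in Lemmas \ref{lem:biC8}, \ref{lem:C8C10} and \ref{lem:triC6}, and then to sum the block-wise inequalities and invoke Euler's formula. Since $G$ is triangle-free, $C_8$-free and $\delta(G)\ge 3$, the classification stated just before the lemma (proved verbatim as Proposition \ref{pro:biC8Block}) restricts $B$ to the four types $K_2$, $C_4$, $\Theta_6$, $Q_7$ of Figure \ref{fig:quadraBlocksTriCEight}, for which $e(B)=1,4,7,9$ and $B$ carries $0,1,2,3$ interior ($4$-)faces respectively. Once $24v(B)-61e(B)+105f(B)\le 0$ is verified in each case, summing over $\mathcal Q$ and substituting $f_G=2-v_G+e_G$ gives $44e_G\le 81v_G-210$, which is Theorem \ref{thm:triC8}.

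Within each case one bounds the three quantities separately. The edge count is read off. For $v(B)$: a vertex of $B$ whose $G$-edges all lie in $B$ contributes $1$, and every other vertex of $B$ lies in a second quadrangular block, hence contributes at most $1/2$; in particular, because $\delta(G)\ge 3$, every vertex of degree $2$ inside $B$ is a junction vertex. For $f(B)$: each interior face is a $4$-face counted exactly, and each exterior edge contributes $1/l(f)$, where by Proposition \ref{pro:triC8} the exterior face $f$ is never a $4$-face (definition of quadrangular block) and, when $B$ is nontrivial, never a $6$-face (part (1)); moreover no two $5$-faces are adjacent (part (3)), and when $B=\Theta_6$ or $Q_7$ at most one exterior $5$-face of $B$ meets $B$ along two consecutive edges (part (2)). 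With these inputs the two small blocks dispose of themselves: for $B=K_2$ one has $v(B)\le 1$, $e(B)=1$, and a short inspection of the faces along the edge; for $B=C_4$ all four vertices are junction vertices so $v(B)\le 2$, and $f(B)\le 1+\tfrac45$, whence $24\cdot 2-61\cdot 4+105f(B)\le 0$ with room to spare.

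The real work --- and the step I expect to be the main obstacle --- is $B=\Theta_6$ and $B=Q_7$. I would run a secondary case distinction on how many of the degree-$3$-in-$B$ vertices on the outer hexagon are junction vertices of $G$: such a junction vertex carries an extra $G$-edge leaving $B$, which simultaneously decreases the bound on $v(B)$ by $1/2$ and splits the two hexagon edges at that vertex into two separate exterior faces (rather than a single exterior face meeting $B$ along a consecutive pair there). Coupling this with the enumeration of how an exterior face can meet $B$ --- one edge, a consecutive pair, two disjoint edges, a consecutive pair together with a disjoint edge, or three pairwise disjoint edges, exactly as in the proof of Lemma \ref{lem:biC8} --- and applying Proposition \ref{pro:triC8}, each exterior face is forced to have length $\ge 7$ except for the single $5$-face allowed by part (2); one then checks that the largest admissible value of $v(B)$ together with the largest admissible value of $f(B)$ still satisfies $24v(B)-61e(B)+105f(B)\le 0$. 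The extremal sub-case should be $Q_7$ with no junction vertex on the hexagon: then its three interior $4$-faces are present, the hexagon is covered by exactly three exterior faces each sharing a consecutive pair of edges with $B$ of which at most one has length $5$, giving $f(B)\le 3+\tfrac25+2\cdot\tfrac27$ and $v(B)\le 4+\tfrac32=\tfrac{11}2$, so that the inequality holds with equality $0$ --- consistent with the construction having every block of zero contribution. The delicate point is keeping the junction-count distinction and the exterior-face-pattern distinction mutually consistent, because promoting a hexagon vertex to a junction vertex both helps (shorter exterior faces become geometrically possible) and hurts (it lowers $v(B)$), and one has to confirm that every trade-off stays nonpositive.
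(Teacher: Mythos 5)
Your overall strategy is the same as the paper's (casework over the four block types $K_2$, $C_4$, $\Theta_6$, $Q_7$, bounding $v(B)$, $e(B)$, $f(B)$ via Proposition \ref{pro:triC8}, then summing and applying Euler's formula), and your treatment of $\Theta_6$ and $Q_7$ lands on essentially the paper's computations, including the correct extremal sub-case of $Q_7$ with three junction vertices. But there is a genuine gap exactly where you claim the case ``disposes of itself'': $B=K_2$. For a \emph{trivial} block, Proposition \ref{pro:triC8}(1) does not exclude a $6$-face on one side of the edge, and part (3) only forces the two incident faces to have lengths at least $5$ and at least $6$. With the only vertex bound you state, $v(B)\le 1$, the worst case gives
$$24\cdot 1-61\cdot 1+105\left(\tfrac15+\tfrac16\right)=\tfrac32>0,$$
so the block-wise inequality fails with your inputs. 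The paper closes this by a further idea you are missing: if an exterior face $f$ of the $K_2$ has length $6$, then by Proposition \ref{pro:triC8}(1) every block meeting $f$ must itself be a $K_2$, so each endpoint of $B$ (having degree at least $3$) lies in at least \emph{three} quadrangular blocks, improving the bound to $v(B)\le 2/3$ and restoring $24v(B)-61e(B)+105f(B)\le -13/2$. Without some such refinement of $v(B)$ in the $6$-face situation, your proof of the lemma does not go through; note also that this is the sub-case responsible for the ``main obstacle,'' not $\Theta_6$ or $Q_7$.

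A secondary, less damaging issue: your reading of Proposition \ref{pro:triC8}(2) as ``each exterior face has length $\ge 7$ except for a single $5$-face'' is not correct. Part (2) only restricts $5$-faces that share \emph{two consecutive} edges with $B$; exterior $5$-faces meeting $B$ in a single edge are perfectly possible, and several may occur (the paper's bounds $f(B)\le 2+6/5$ for $\Theta_6$ and $f(B)\le 3+6/5$ for $Q_7$ with many junction vertices reflect exactly this). Since your misstatement errs in the restrictive direction it does not produce a counterexample to your final inequalities for $\Theta_6$ and $Q_7$, but the correct bookkeeping is the paper's: allow $1/5$ for every single-edge exterior incidence, and use parts (1)--(3) only to force one of the consecutive-pair faces up to length at least $7$.
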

Once we have proved this, we have $$
24v_G-61e_G+105f_G=\sum_{B\in \mathcal{Q}}(24v(B)-61e(B)+105f(B))\le 0.
$$ Combining this with Euler's formula $v_G-e_G+f_G=2$ finishes the proof.

\begin{proof}
We do casework to proceed with the proof. We distinguish the cases according to $B$.

\textbf{Case 1}: $B$ is $K_2$.

Since $\delta(G)\ge 3$, we know that each vertex of $B$ is contained in at least two quadrangular blocks, which means both of them have to be junction vertices, so we have $v(B)\le 1$. By Proposition \ref{pro:triC8}, we know that no two $5$-faces can be adjacent, so one of the faces containing the only edge of $B$ has length at least $5$ and the other has length at least $6$. Suppose both exterior faces of $B$ do not have length $6$, then we know that $f(B)\le 1/5+1/7$. With $e(B)=1$, we have $24v(B)-61e(B)+105f(B)\le -1$. 

If there is an exterior face $f$ of $B$ having length $6$, then $f(B)\le 1/5+1/6$. By Proposition \ref{pro:triC8}, each block having $f$ as an exterior face must be $K_2$. Thus, the two junction vertices of $B$ are both shared by at least three quadrangular blocks, which yields $v(B)\le 2/3$. With $e(B)=1$, we have $24v(B)-61e(B)+105f(B)\le -13/2$.

\textbf{Case 2}: $B$ is $C_4$.

Since all of the four vertices in $B$ have degree $2$, we know all of them are junction vertices. So we have $v(B)\le 4/2=2$. There is only one interior face in $B$ and each exterior face of $B$ has length at least $5$ so we have $f(B)\le 1+4/5$. With $e(B)=4$, we have $24v(B)-61e(B)+105f(B)\le-7$.

\textbf{Case 3}: $B$ is $\Theta_6$.

Four of the vertices in $B$ have degree $2$, so we know they must be junction vertices, while each of the other two degree $3$ vertices can either be a junction vertex or not. We consider $2$ different scenarios.
\begin{enumerate}[(i)]
    \item There are exactly $4$ junction vertices in $B$. In this case, we have $v(B)\le 2+4/2$. Note that now there are two pairs of consecutive exterior edges. If they lie in the same exterior face, then it's easy to check this face has length at least $7$. If they lie in different exterior faces, then by Proposition \ref{pro:triC8} we know that at most one of the two faces can have length $5$. So, at least one of those two faces have length at least $7$. We then have $f(B)\le 2+2/7+4/5$. With $e(B)=7$, we have $24v(B)-61e(B)+105f(B)\le-7$.
    \item There are $5$ or $6$ junction vertices. We have $v(B)\le 1+5/2$ and $f(B)\le 2+6/5$. With $e(B)=7$ we have $24v(B)-61e(B)+105f(B)\le-7$.
\end{enumerate}

\textbf{Case 4}: $B$ is $Q_7$.

We always have $e(B)=9$ in this case. Three of the exterior vertices in $B$ have degree $2$, so we know they must be junction vertices. Now we consider $3$ different scenarios.
\begin{enumerate}[(i)]
    \item There are exactly $3$ junction vertices in $B$. In this case, we have $v(B)\le 4+3/2$. By Proposition \ref{pro:triC8} we know that at most one of the three exterior faces containing two consecutive exterior edges has length $5$. Thus, we have $f(B)\le 3+2/5+4/7$. We then can see $24v(B)-61e(B)+105f(B)\le 0$.
    \item There are $4$ junction vertices in $B$. In this case, we have $v(B)\le 3+4/2$. We know that at most one of the two exterior faces containing a pair of consecutive exterior edges has length $5$, so we have $f(B)\le 3+4/5+2/7$, which gives $24v(B)-61e(B)+105f(B)\le 0$.
    \item There are $5$ or $6$ junction vertices in $B$. In this case, we have $v(B)\le 2+5/2$ and $f(B)\le 3+6/5$. Those give us $24v(B)-61e(B)+105f(B)\le 0$.
\end{enumerate}
\end{proof}

\begin{remark}
It's easy to see that there is no graph only containing $Q_7$ quadrangular blocks, so the upper bound we have now is not sharp. It's still open how to improve it.
\end{remark}

\section*{Acknowledgements}

This paper was written under the auspices of the Budapest Semesters in Mathematics program during the Spring of 2022. The research of Gy\H{o}ri  was partially supported by the National Research, Development and Innovation Office NKFIH, grants  K132696, SNN135643.

\printbibliography

\end{document}